\theoremstyle{plain}
\newtheorem{thm}{Theorem}[section]
\newtheorem{prop}[thm]{Proposition}
\newtheorem{lemma}[thm]{Lemma}
\newtheorem{cor}[thm]{Corollary}
\theoremstyle{definition}
\newtheorem{definition}[thm]{Definition}
\theoremstyle{remark}
\newtheorem{ep}[thm]{Example}
\newcommand{\g}{\ensuremath{\frak{g}}}
\newcommand{\Sh}{\mathrm{Sh}}
\newcommand{\maps}{\colon}
\newcommand{\tensor}{\otimes}
\renewcommand{\deg}[1]{\left \lvert #1 \right \rvert}
\newcommand{\pr}{\mathrm{pr}}
\newcommand{\epi}{\twoheadrightarrow}
\renewcommand{\S}{\bar{S}}
\newcommand{\rDelta}{\bar{\Delta}}
\newcommand{\rdDelta}[1]{\bar{\Delta}^{(#1)}}
\newcommand{\Xham}{\mathfrak{X}_{\mathrm{Ham}}}
\newcommand{\ham}[1]{\Omega^{#1}_{\mathrm{Ham}}\left(M\right)}
\newcommand{\xto}[1]{\xrightarrow{#1}}
\NewDocumentCommand\mycite{mgggg}{\IfNoValueTF{#5}{\IfNoValueTF{#3}{\IfNoValueTF{#2}{\singlecite{#1}}{\singlecitedetail{#1}{#2}}}{\multicite{#1}{#2}{#3}}}{\multimulticite{#1}{#2}{#3}{#4}{#5}}} 
\begin{document}
\setcounter{tocdepth}{1}
\selectlanguage{english}
\thispagestyle{empty}
\phantom{a}
\vspace{2cm}

\vspace*{-1.3cm}
\newcommand{\HRule}{\rule{\linewidth}{0.3mm}}
\setlength{\parindent}{1cm}
\setlength{\parskip}{1mm}
\noindent
\HRule
\begin{center}

\setlength{\baselineskip}{3\baselineskip}
\textbf{Strongly Homotopy Lie Algebras from Multisymplectic Geometry}
\HRule
\vspace{4cm}


\includegraphics[scale=0.5]{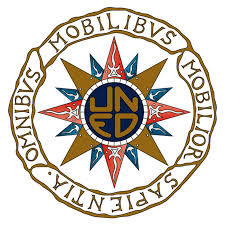}

\vspace{4cm}


\begin{center}

\renewcommand{\thefootnote}{\alph{footnote}}

{\bf{C. S.~Shahbazi}\footnotetext{Institut de Physique Th\'eorique, CEA-Saclay. Email: carlos.shabazi [at] cea.fr} \\ \bf{U.N.E.D.}
}
\\
\renewcommand{\thefootnote}{\arabic{footnote}}

\vspace{.5cm}

\vspace{.5cm}

\end{center}

\vspace{2cm}


\end{center}
\clearpage

\thispagestyle{empty}
\phantom{1}

\clearpage

\thispagestyle{empty}
\phantom{a}
\begin{center}
{
\renewcommand{\tabcolsep}{2em}
\begin{tabular}{cc}  
\textbf{Universidad Nacional}& \textbf{Consejo Superior de} \\ 
\textbf{de Eduaci\'on a distancia} & \textbf{Investigaciones Cient\'ificas} \\  
\includegraphics[scale=0.3]{unedlogo.jpeg} & \includegraphics[scale=0.3]{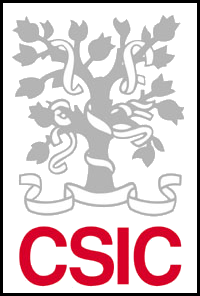} \\
Facultad de Ciencias & Instituto de Ciencias Matem\'aticas\\

\end{tabular}} 
\end{center}
\vspace{2cm}

\vspace{\stretch{5}}
\begin{center}
\setlength{\baselineskip}{2\baselineskip}
\textbf{
{\sizea {\sizeA G}EOMETR\'IA MULTISIMPL\'ECTICA Y $L_{\infty}$-\'ALGEBRAS}}
	
\vspace{\stretch{1}}
\setlength{\baselineskip}{0.5\baselineskip}
\vspace{\stretch{6}}

	
\vspace{\stretch{4}}
Tesis de M\'aster dirigida por:
\\[1ex]
\textbf{Profesor D. Marco Zambon}\\
Profesor contratado Doctor, Universidad Aut\'onoma de Madrid, y miembro del ICMAT
\\[5ex]
\end{center}

\newpage
\thispagestyle{empty}
\phantom{1}

\clearpage

\section*{Acknowledgments}
 
En \'esta t\'esis de m\'aster se recoge parte del trabajo que he realizado como estudiante de m\'aster bajo la direcci\'on del profesor Marco Zamb\'on, a quien estoy profundamente agradecido por todo el tiempo, paciencia y dedicaci\'on que ha invertido en mi. Ha tenido en cuenta desde el primer momento mis circunstancias personales especiales como estudiante de m\'aster y ha hecho todo lo posible por facilitar mi trabajo y aprendizaje. No solo eso, sino que ha me ha ayudado y aconsejado extensamente en multitud de cuestiones matem\'aticas relacionadas con mi \emph{otro} trabajo. Por todo ello, y por introducirme en un \'area de la matem\'atica tan bonita como es la geometr\'ia diferencial, !`gracias Marco!.

Tengo que agradecer tambi\'en a Pablo Bueno, Patrick Meessen y Tom\'as Ort\'in el inter\'es mostrado por el trabajo que en \'estas p\'aginas se recoge: siendo F\'isicos Te\'oricos con una impecable formaci\'on en geometr\'ia diferencial siempre est\'an dispuestos a unirse cualquier debate sobre el tema. Sin duda tambi\'en he aprendido mucho de ellos.
 
Finalmente quiero agradecer a mi familia y amigos, en especial a mi madre que me ha apoyado en todo momento durante mis estudios, la alegr\'ia y compa\~n\'ia que me brindan cada d\'ia.

\addtocontents{toc}{\protect\thispagestyle{empty}}
\tableofcontents
\thispagestyle{empty}
\clearpage


\renewcommand{\thepage}{\arabic{page}}
\pagestyle{fancy}
\renewcommand{\leftmark}{\MakeUppercase{Chapter \thechapter. Introduction}}
\renewcommand{\headrulewidth}{0.0pt}
\fancyhead[LE]{\thepage}
\fancyhead[RE]{\slshape \leftmark}
\fancyhead[LO]{\slshape \rightmark}
\fancyhead[RO]{\thepage}
\fancyfoot{}


\chapter{Introduction}

Multisymplectic geometry \cite{CILeon,1998math......5040E,JAZ:4974756,Ibort} considers generalizations of symplectic manifolds\footnote{A symplectic manifold is adifferentiable manifold equipped with a closed non-degenerate two-form} which are called $n$-plectic manifolds. A  differentiable manifold is $n$-plectic if it is equipped with a closed non-degenerate, in a particular sense to be explained later, $(n+1)$-form. 

In symplectic geometry, one can equip the space of functions on a symplectic manifold with the structure of a Poisson algebra by means of the symplectic form \cite{Ana,Meinrenken}. It turns out that in multisymplectic geometry, the $n$-plectic structure present on an $n$-plectic manifold gives the structure of Lie-$n$ algebra to a particular complex $L$ constructed out of differential forms on the manifold \cite{2009arXiv0901.4721B,2010arXiv1001.0040R,
2010arXiv1009.2975R,2011arXiv1106.4068R,2012LMaPh.100...29R}. Lie $n$-algebras are particular instances of strongly homotopy Lie algebras \cite{Lada:1994mn}, or $L_{\infty}$-algebras, in which the underlying complex is finite. 

In symplectic geometry it is possible to define a particular kind of Lie group action on the symplectic manifold, such that they preserve the symplectic form and is generated by Hamiltonian vector fields. More precisely, the action of a Lie group $G$, with Lie algebra $\mathfrak{g}$, on a symplectic manifold $\left(\mathcal{M},\omega\right)$ is said to be Hamiltonian if it admits a moment map, that is, a map \cite{Ana,Meinrenken}

\begin{equation}
\mu\maps \mathcal{M}\to \mathfrak{g}^{\ast}\, ,
\end{equation}

\noindent
such that the following conditions are satisfied

\begin{enumerate}

\item For each $v\in\mathfrak{g}$, let

\begin{itemize}

\item $\mu^{v} :\, \mathcal{M}\to \mathbb{R}\, ,$ given by $\mu^{v}(p)\equiv <\mu (p), v >$, where $<\cdot, \cdot >$ is the natural pairing of $\mathfrak{g}$ and $\mathfrak{g}^{\ast}$.

\item $\tilde{v}$ be the vector field generated by the one-parameter subgroup $\left\{e^{t v}\, \,  |\, \, t\in\mathbb{R}\right\}\subseteq G$.

\end{itemize}

Then 

\begin{equation}
d\mu^{v} = -\iota_{\tilde{v}}\omega\, ,
\end{equation}

\noindent
that is, $\mu^{v}$ is a Hamiltonian function for the vector field $\tilde{v}$.

\item $\mu$ is equivariant with respect to the given action and the coadjoint action $\mathrm{Ad}^{\ast}$ of $G$ on $\mathfrak{g}^{\ast}$.

\end{enumerate}

\noindent
In particular, if an action is Hamiltonian then its infinitesimally generated by Hamiltonian vector fields. The notion of Hamiltonian action on a symplectic manifold can be equivalently defined in terms of a comoment map, that is, a Lie algebra homomorphism

\begin{equation}
\label{eq:mmnt}
\mu^{\ast}\maps \mathfrak{g}\to C^{\infty}\left(\mathcal{M}\right)\, ,
\end{equation}

\noindent
such that $d\mu^{\ast}\left(v\right) = -\iota_{v}\omega\, , \quad v\in\mathfrak{\mathcal{M}}$. That is, $\mu^{\ast}\left(v\right)$ is the Hamiltonian vector field of $v$. Remarkably enough, the notion of Hamiltonian action can be also defined for the action of a Lie group on an $n$-plectic manifold by defining the so-called \emph{homotopy moment map} \cite{2013arXiv1304.2051F}, a generalization of the comoment map \eqref{eq:mmnt} for $n$-plectic manifolds. Loosely speaking, it consists on a $L_{\infty}$-morphism 

\begin{equation}
f \maps \mathfrak{g}\to L\, ,
\end{equation}

\noindent
that lifts, in a suitable sense, the map from $\mathfrak{g}$ to the set of Hamiltonian vector fields which is assumed to exist from the outset.

This master thesis is devoted to the study of $L_{\infty}$-morphisms between Lie-$n$ algebras constructed on $n$-plectic manifolds, as well as the study of homotopy moment maps on $n$-plectic manifolds equipped with the Hamiltonian action of a Lie group.

The study of differentiable manifolds equipped with closed non-degenerate forms can be justified from different points of views in mathematics as well as in physics. Standard motivations correspond to the important role that symplectic and multisymplectic manifolds play in classical mechanics, classical field theory and also in the corresponding quantization procedures. However, $n$-plectic manifolds\footnote{Maybe dropping the non-degeneracy condition on the $(n+1)$-form. The corresponding manifold is then called pre-$n$-plectic. Note however that most of the results about $n$-plectic manifolds can be extended in a suitable way to pre-$n$-plectic manifolds.} may be physically relevant on another level: the space-time manifold that describes the universe, at least up to some energy scale, could have the structure of an $n$-plectic manifold.

Such possibility naturally arises in Superstring Theory \cite{Scherk:1974ca,Yoneya:1974jg,
Green:1984sg,Gross:1985fr,Gross:1985rr,Strominger:1996sh,gswst,Greene:1996cy,
Ortin:2004ms,Kiritsis:2007zza,Ibanez:2012zz}, a very promising candidate theory for the quantum description of all the known interactions of nature. Superstring theory implies the existence of several differential forms defined on the space-time manifold, some of them closed, corresponding to field strengths of the Ramond-Ramond and the Neveu-Schwarz Neveu-Schwarz forms of the corresponding Supergravity. Therefore the space-time manifold in Superstring theory is going to be at least a pre-$n$-plectic manifold. The non-degeneracy properties of the forms will depend on the particular solution to be considered.

The space-time that we observe is four-dimensional, yet Superstring theory predicts that the space-time must be ten-dimensional. In order to fix this apparent contradiction, several mechanisms have been proposed in the literature \cite{Kaluza,Klein,Duff:1986hr,Randall:1999ee,Randall:1999vf,Grana:2005jc}. One of them, the Kaluza-Klein reduction \cite{Duff:1986hr}, consists in assuming that the space-time manifold $\mathcal{M}$ is locally the product of a four-dimensional non-compact manifold $\mathcal{M}_{4}$ and a six-dimensional compact manifold $\mathcal{M}_{6}$

\begin{equation}
\mathcal{M} = \mathcal{M}_{4}\times \mathcal{M}_{6}\, ,
\end{equation}

\noindent
\emph{small enough} to not be accessible in current high-energy experiments. Superstring theory constrains the different manifolds $\mathcal{M}_{6}$ that we can consider as compact manifolds \cite{Grana:2005jc}. In particular, for supersymmetric compactifications, the existence of one or several globally defined spinors on the compact manifold implies the existence of globally defined forms, which, depending on the details of the compactification, may be closed and non-degenerate. As an example, we can consider M-theory \cite{Witten:1995ex}, closely related to Superstring Theory, which is a theory that predicts the space-time manifold to be eleven-dimensional. The fluxless compactification of such theory on a seven dimensional compact manifold $\mathcal{M}_{7}$ implies that $\mathcal{M}_{7}$ must be a manifold of $G_{2}$-holonomy \cite{Duff:1986hr,Acharya:2003ii,Behrndt:2003uq,House:2004pm}. Therefore, it has a globally defined, closed and non-degenerate three-form \cite{Joyce} and thus it is a two-plectic manifold. 

It is worth pointing out that the interpretation of the Lie-$n$ algebras associated to the space-time manifold or the compactification manifold is not known, and it would be interesting to find out if it encodes any physical information about the theory itself. Notice that $L_{\infty}$-algebras have appeared already in Superstring Theory and Supergravity. For example, the algebra of states in the Fock space of closed String Field Theory is a strongly homotopy Lie algebra \cite{Lada:1992wc}. For more applications of $L_{\infty}$-algebras to Superstring Theory and Supergravity the interested reader may consult \cite{D'Auria:1982nx,Castellani:1991et,Castellani:1991eu,Castellani:1991ev,Sorokin:1997ps,Sati:2008eg,
Baez:2009xt,Baez:2010ye,Ritter:2013wpa}. It is clear then that multisymplectic geometry and $L_{\infty}$-algebras play an important role in theoretical physics and in particular in Superstring Theory and Supergravity, and thus more effort is needed in order to uncover the role that these mathematical structures play in the theories that describe the fundamental interactions of nature.

{\bf The outline of this work is as follows}. {\bf In chapter \ref{chapter:mathpreliminaries}} we introduce the relevant background material necessary for the rest of the paper, which includes graded algebra theory and homological algebra theory, fibre bundles and basics of Lie groups and symplectic geometry. It is intended for non-experts, perhaps interested physicists, and therefore can be skipped by experts. {\bf In chapter \ref{chapter:linfty}} we introduce $L_{\infty}$-algebras and define $L_{\infty}$-morphisms in an independent way, not related yet to multisymplectic geometry, giving explicit formulae relating $L_{\infty}[1]$-algebras and $L_{\infty}$-algebras. {\bf Chapter \ref{chapter:multisymplectic}} contains the new results present in this thesis. We first introduce $n$-plectic manifolds and connect them to $L_{\infty}$-algebras. Then we introduce, closely following \cite{2013arXiv1304.2051F}, the concept of homotopy moment map. {\bf Sections \ref{sec:multidiff} and \ref{sec:productman}} contains the new material present in this work. In section \ref{sec:multidiff} we obtain specific conditions under which two $n$-plectic manifolds with strictly isomorphic Lie-$n$ algebras are symplectomorphic. Finally, in section \ref{sec:productman}, we study the construction of an homotopy moment map for a product manifold assuming that the factors are $n$-plectic manifolds equipped with the corresponding homotopy moment maps.

\cleardoublepage


\chapter{Mathematical preliminaries}
\label{chapter:mathpreliminaries}

The goal of this chapter is to introduce the relevant basic background that will be needed through the rest of the paper. The content is elementary and therefore can be skipped by expert readers, although it may be useful for interested non-mathematicians. Basic references for this chapter are \cite{JMLee,DHbundle,WarnerKoba,KN,Ana,Meinrenken,2013arXiv1304.2051F,Lada:1994mn,tensoranalisis}. We will consider exclusively real vector spaces, real manifolds and real bundles over them, unless otherwise stated.


\section{Graded and homological algebra theory}
\label{sec:hlahomalgebra}


The purpose of this section is to introduce the basic elements of graded algebra theory and Homological algebra that we will need through the rest of the letter. Graded algebra theory refers to the study of algebraic structures in a graded vector space. Higher  Homological algebra theory studies homology (or co-homology) in an abstract setting, that is, on abstractly defined complexes (or co-complexes). I have tried to recollect here the basic definitions and constructions that are well known to experts but that might be difficult to find in a clear way on a first approximation to the topic. 


\subsection{Higher algebras}
\label{sec:higheralgebra}


We begin with a series of definitions increasing step by step the structures involved. We begin by the simple definition of an algebra.

\definition{\label{def:algebra} A real algebra $(V,\cdot)$ is a real vector space $V$ space equipped with a bilinear product $V\times V\to V$, denoted by $\cdot$ or concatenation of elements. }

\noindent
We require the product $\cdot$ of an algebra to be inner. It may have, however, other properties. For example, if $x_{1}\cdot x_{2} = x_{2}\cdot x_{1}$ for all $x_{1}, x_{2} \in V$ the algebra $\left(V,\cdot\right)$ is said to be commutative. If $x_{1}\cdot ( x_{2}\cdot x_{3}) = ( x_{1}\cdot x_{2} )\cdot x_{3}$ for all $x_{1}, x_{2}, x_{3} \in \mathrm{X}$  then the algebra is said to be associative. If the algebra $V$ contains an identity, that is, an element $e\in V$ such that $e\cdot x=x\cdot e = x$ for all $x\in V$ then it is said to be unital. A real algebra such that the product is associative and has an identity is therefore a ring that is also a vector space, and it is called a \emph{unital associative algebra}.

\definition{\label{def:gradedvectorspace} Let $G$ be an abelian group. A $G$-\emph{graded vector space} $V$ over $\mathbb{R}$ is a collection $\left(V_{g}\right)_{g\in G}$ of vector spaces over $\mathbb{R}$.}

\noindent
The homogeneous elements of degree $g\in G$ of a $G$-graded vector space $V$ are the elements of $V_{g}$. In this work we will consider exclusively $G=\mathbb{Z}$-graded vector spaces $V=\bigoplus_{i\in\mathbb{Z}} V_{i}$. The grade of an homogeneous element $x\in V$ is denoted by $|x|$, and of course, since we will consider only $\mathbb{Z}$-graded vector spaces, we will always have $|x|\in\mathbb{Z}$. 

\definition{\label{def:gradedalgebra} A real graded algebra is a real graded vector space $V=\bigoplus_{i\in\mathbb{Z}} V_{i}$ equipped with a bilinear product $V\times V\to V$ which will be denoted by $\cdot$ or concatenation of elements, such that
 
\begin{equation}
V_{i}\cdot V_{j}\subset V_{i+j}\, .
\end{equation}

}

\definition{\label{def:gradedliealgebra} A real graded Lie algebra is a real graded algebra $(V,\cdot)$ equipped with a bilinear product $[\cdot,\cdot]: V\times V\to V$ such that the following axioms are satisfied

\begin{enumerate}

\item $[\cdot,\cdot]$ respects the grading of $V$, that is, $[V_i,V_j]\subseteq V_{i+j}$.

\item If $x_{1}, x_{2} \in V$ are homogeneous elements then

\begin{equation}
\label{eq:antsymbracket}
[x_{1},x_{2}]=-(-1)^{|x_{1}||x_{2}|}\,[x_{2},x_{1}]\, .
\end{equation}

\noindent
That is, $[\cdot,\cdot]$ is antisymmetric in the graded sense.

\item If $x_{1}, x_{2}, x_{3} \in V$ then 

\begin{equation}
\label{eq:gradedjacobi}
(-1)^{|x_{1}||x_{3}|}[x_{1},[x_{2},x_{3}]]+(-1)^{|x_{1}||x_{2}|}[x_{2},[x_{3},x_{1}]]+(-1)^{|x_{2}||x_{2}|} [x_{3},[x_{1},x_{2}]] = 0\, .
\end{equation}

\noindent
That is, $[\cdot,\cdot]$ satisfies the graded Jacobi identity.
\end{enumerate}

}

\noindent
Equation (\ref{eq:antsymbracket}) is just the graded version of the antisymmetric Lie bracket of a not-graded Lie algebra. Analogously, Eq. (\ref{eq:gradedjacobi}) is just the graded version of the Jacobi identity for not-graded Lie algebras. When $V$ is concentrated in degree zero, a real graded Lie algebra is just an ordinary real Lie algebra. Let us consider now a simple example taken from physics.

\begin{ep}\label{ep:susyalgebra} {\bf Supersymmetry algebra}. The Super-Poincar\'e algebra $\mathfrak{sp}$ is, in physical terms, an extension of the Poincar\'e algebra by fermionic generators that obey anti-commutation relations. We denote by $M_{ab}$ the generators of the Lorentz group, by $P_{a}$ the generators of translations and by $Q_{\alpha}$ the fermionic generators, which are $\mathrm{Spin}\left(1,3\right)$ spinors of definite chirality. They obey the following (anti)-commutation relations

\begin{eqnarray}
\label{eq:poincare1}
\left[ M_{ab}, M_{cd}\right] &=& -M_{eb} \Gamma_{\mathrm{v}}\left(M_{cd}\right)^{e}_{a}-M_{ae} \Gamma_{\mathrm{v}}\left(M_{cd}\right)^{e}_{b} \\
\label{eq:poincare2}\left[ P_{a}, M_{cd}\right] &=&-P_{e} \Gamma_{\mathrm{v}}\left(M_{cd}\right)^{e}_{a}\\
\label{eq:fermionpoincare}
\left[ Q^{\alpha}, M_{ab}\right] &=& \Gamma_{\mathrm{s}}\left(M_{ab}\right)^{\alpha}_{\beta} Q^{\beta}\\
\label{eq:fermion}
\left\{ Q^{\alpha}, Q^{\beta}\right\} &=& i \left(\gamma^{a} C^{-1}\right)^{\alpha\beta} P_{a}\, ,
\end{eqnarray}

\noindent
where $\Gamma_{\mathrm{v}}$ denotes the vectorial representation, $\Gamma_{\mathrm{s}}$ denotes the spinorial representation and $C$ is the charge conjugation matrix. $\left\{\cdot,\cdot\right\}$ is defined as follows $\left\{ Q^{\alpha},Q^{\beta}\right\} = Q^{\alpha} Q^{\beta} + Q^{\beta}Q^{\alpha}$. The Super-Poincar\'e algebra gets beautifully described as a particular instance of $\mathbb{Z}_{2} = \mathbb{Z}/2\mathbb{Z}$\footnote{The definition of $\mathbb{Z}_{2}$-graded space can be obtained from the definition of $\mathbb{Z}$-graded vector space by simply substituting $\mathbb{Z}$ by $\mathbb{Z}_{2}$.} graded Lie algebra $\mathfrak{sp} = \mathfrak{sp}_{0}\oplus \mathfrak{sp}_{1}$ with bracket $[\cdot ,\cdot ]_{\mathbb{Z}_{2}}$. The elements of $\mathfrak{sp}_{0}$ are called even and correspond to the \emph{bosonic} generators $M_{ab}$ and $P_{a}$ of the algebra. The elements of $\mathfrak{sp}_{1}$ are called odd and correspond to the \emph{fermionic} generators of the algebra $Q_{\alpha}$. In particular, we have

\begin{eqnarray}
\left[x_{1},x_{2}\right]_{\mathbb{Z}_{2}} &=&  -\left[x_{2},x_{1}\right]_{\mathbb{Z}_{2}}\, ,\qquad\forall\,\, x_{1}, x_{2} \in \mathfrak{sp}_{0}\label{eq:commu12}\, ,\\
\left[x_{1},x_{2}\right]_{\mathbb{Z}_{2}} &=& -\left[x_{2},x_{1}\right]_{\mathbb{Z}_{2}}\, ,
\qquad\forall\,\, x_{1} \in \mathfrak{sp}_{0}\, , \,\forall\,\, x_{2}\in \mathfrak{sp}_{1}\, ,\label{eq:anticommu12}\\
\left[x_{1},x_{2}\right]_{\mathbb{Z}_{2}} &=& \left[x_{2},x_{1}\right]_{\mathbb{Z}_{2}}\, ,\qquad\forall\,\, x_{1}, x_{2}\in \mathfrak{sp}_{1}\, .\label{eq:ac12}
\end{eqnarray}

\noindent
Therefore, equation (\ref{eq:commu12}) corresponds to the commutators (\ref{eq:poincare1}) and (\ref{eq:poincare2}) in the Super-Poincar\'e algebra, equation (\ref{eq:anticommu12}) corresponds to the commutator (\ref{eq:fermionpoincare}) in the Super-Poincar\'e algebra, and equation (\ref{eq:ac12}) corresponds to the anti-commutator (\ref{eq:fermion}) in the Super Poincar\'e algebra. 

\end{ep}

\definition{\label{def:gradedderivation} Let $(V,\cdot)$ be a graded algebra. A homogeneous linear map $d: V\to V$  of grade $|d|$ on $V$ is called a homogeneous derivation if $d(x_{1}\cdot x_{2})=d(x_{1})x_{2}+\epsilon^{|x_{1}||d|}x_{1}\cdot d(x_{2})$, where $\epsilon = \pm 1$ and $x_{1}, x_{2}\in V$ are homogeneous elements. A graded derivation is sum of homogeneous derivations with the same $\epsilon$. In the context of graded algebra, the choice $\epsilon = -1$ is the most natural one, since it takes into account the graded structure of the algebra.

}







\definition{\label{def:difgradedaliealgebra} A real differential graded Lie algebra is a real graded Lie algebra $(V,\cdot)$ equipped with a degree $\pm 1$ (depending on chain or cochain complex convention) derivation $d:V\to V$ that satisfies 

\begin{enumerate}

\item $d \circ d = 0$. Therefore $d$ gives $V$ the structure of a chain ($|d|=-1$) or cochain complex ($|d|=1$).

\item $d [x_{1}, x_{2}] = [dx_{1}, x_{2}] + (-1)^{|x_{1}|}[x_{1}, dx_{2}]$, where $x_{1}$ and $x_{2}$ are homogeneous elements of $V$. 

\end{enumerate}

}

\noindent
Given two homogeneous elements $x_{1}, x_{2} \in V$ of an arbitrary graded algebra $\left(V,\cdot\right)$, in principle $x_{1}\cdot x_{2}$ and $x_{2}\cdot x_{1}$ are different elements of $\left(V,\cdot\right)$ not related in any particular way. However, if for every pair of homogeneous elements $x_{1}, x_{2} \in V$ the following holds 

\begin{equation}
x_{1}\cdot x_{2} = (-1)^{|x_{1}||x_{2}|} x_{2}\cdot x_{1}\, , 
\end{equation}

\noindent
then $\left(V,\cdot\right)$ is said to be a graded commutative algebra. Analogously, if 

\begin{equation}
x_{1}\cdot x_{2} = -(-1)^{|x_{1}||x_{2}|} x_{2}\cdot x_{1}\, , 
\end{equation}

\noindent
holds, then the algebra $\left(V,\cdot\right)$ is said to be a graded anti-commutative algebra. This procedure can be generalized by defining the \emph{Koszul sign}. Let $x_{1},\dots , x_{n}$ be elements of a symmetric graded algebra $\left(V,\cdot\right)$ and $\sigma \in \Sigma_n$ a permutation. The Koszul sign $\epsilon(\sigma)=\epsilon(\sigma ; x_{1},\hdots,x_{n})$ is defined by the equality

\[
x_{1}  \cdots  x_{n} = \epsilon(\sigma ;
x_{1},\hdots,x_{n}) x_{\sigma(1)} \cdots  x_{\sigma(n)}\, ,
\]

\noindent
which holds in the free graded commutative algebra generated by $V$, with product denoted by concatenation of elements. The Koszul sign can be equivalently defined using an antisymmetric graded algebra $\left(V,\cdot\right)$ as follows

\[
x_{1}  \cdots  x_{n} = (-1)^{\sigma} \epsilon(\sigma ;
x_{1},\hdots,x_{n}) x_{\sigma(1)} \cdots  x_{\sigma(n)}\, ,
\]

\noindent
Given $\sigma \in \Sigma_n$, $(-1)^{\sigma}$ denotes the usual sign of a permutation. Please notice  that $\epsilon(\sigma)$ does not include the sign $(-1)^{\sigma}$. For example, given $x_{1}, x_{2}, x_{2} \in V$, where $\left(V,\cdot\right)$ is free graded commutative algebra, we have

\begin{equation}
x_{1}\cdot x_{2}\cdot x_{3} = \epsilon(3,2,1;x_{1},x_{2}.x_{3}) x_{3} \cdot x_{1}\cdot x_{2}\, ,\qquad \epsilon(3,2,1;x_{1},x_{2}.x_{3}) = (-1)^{|x_{3}||x_{2}|+|x_{3}||x_{1}|}\, . 
\end{equation}

\noindent
We say that $\sigma \in \Sigma_{p+q}$ is a $\mathbf{(p,q)}$-\emph{unshuffle}
if and only if $\sigma$ is a permutation of a set of $(p+q)$ elements such that $\sigma(1) < \cdots < \sigma(p)$ and  $\sigma(p+1) < \cdots <  \sigma(p+q)$.  The set of
$(p,q)$-unshuffles is denoted by $\mathrm{Sh}(p,q)$. For example, $\mathrm{Sh}(2,1)$
is the set of cycles $\{ (1), (23), (123) \}$.

If $V$ is a graded vector space, then ${\bf s}V$ denotes the suspension of $V$, and ${\bf s}^{-1}V$ denotes the desuspension of $V$, defined respectively by

\begin{equation}
\left({\bf s}V\right)_{i} = V_{i-1}\, ,\qquad \left({\bf s}^{-1}V\right)_{i} = V_{i+1}\, .
\end{equation}

\noindent
Another very used notation for the (de)suspension of a graded vector space $V$ is 

\begin{equation}
{\bf s}^{\pm k}V = V[\mp k]\, .
\end{equation}

\noindent
Sometimes we will write $s^{\pm 1}x$ where $x$ is an homogeneous element of a given graded vector space $V$. The meaning of such expression can be understood as follows. Let us assume that $x\in V_{i}\, ,\,\, i\in \mathbb{Z}$. Then we have

\begin{equation}
s^{\pm 1}x \in s^{\pm 1}\left( V_{i}\right) = \left( s^{\pm 1}V\right)_{i\mp 1}\, ,
\end{equation}

\noindent
since $\left( s^{\pm 1}V\right)_{i\mp 1} = V_{i}$. 

\definition{A morphism $f$ from a $\mathbb{Z}$-graded vector space $V$ to a $\mathbb{Z}$-graded vector space $W$ is a collection of linear maps $\left(f_{i}:V_{i}\to W_{i}\right)_{i\in\mathbb{Z}}$.} 

\noindent
Hence, it is implicitly assumed in the definition that a morphism preserves the grading, that is, that $|f(x)| = |x| \in\mathbb{Z}$. It is said then that $f$ is homogeneous of degree zero. An isomorphism of graded vector spaces is a morphism $f$ whose components $f_{i}$ are isomorphisms of vector spaces. It is possible, of course, to consider maps that do not preserve the grading. The degree of a map $f$ is denoted by $|f|$. If a map $f: V\to W$ of graded vector spaces has degree $|f|=k\, ,\,\,k\in\mathbb{Z}$, then we have 

\begin{equation}
f\left(V_{i}\right)\subseteq V_{i+k}\, ,\,\, \forall\,\, i\in\mathbb{Z}\, . 
\end{equation}

\noindent
The set of all morphisms from a graded vector space $V$ to a graded vector space $W$ is denoted by $\mathrm{Hom}\left(V,W\right)$. Notice that $\mathrm{Hom}\left(V,W\right)$ is itself a graded vector space with homogeneous component $\mathrm{Hom}\left(V,W\right)_{i}\,\,\, i\in\mathbb{Z}$ given by the set of components $f_{i}\maps V_{i}\to W_{i}\, ,\,\, f\in\mathrm{Hom}\left(V,W\right)$.  

\definition{\label{def:tensorproduct} Let $F(V\times W)$ the free vector space over $\mathbb{R}$ whose generators are the points of $V\times W$, where $\times$ stands for the Cartesian product \footnote{The Cartesian product $V \times W$ is the set of pairs $(v, w)\, , v \in V\, , w \in W$, which is itself a vector space, although here it is considered merely as a set.} The tensor product is defined as a certain quotient vector space of $F(V \times W)$. Consider the subspace $R$ of $F(V \times W)$ generated by the following elements\footnote{To simplify the notation, we denote by $(v,w)$ the element $1\cdot (u,w) \in F(V \times W)$.}

\begin{eqnarray}
(v_1, w) + (v_2, w) - (v_1 + v_2, w)\, ,\nonumber\\ 
(v, w_1) + (v, w_2) - (v, w_1 + w_2)\, ,\nonumber\\ 
c \cdot (v, w) - (cv, w)\, , \nonumber\\ 
c \cdot (v, w) - (v, cw)\, ,
\end{eqnarray}

\noindent
where $v, v_1, v_2 \in V\, , \,\,\, w, w_1, w_2 \in W\, ,$ and $c\in\mathbb{R}$. The tensor product is then defined as the vector space

\begin{equation}
V \otimes W \equiv F(V \times W) / R\, .
\end{equation}

}

\noindent
The tensor product of two vectors $v$ and $w$ is denoted by the equivalence class $v\otimes w\in ((v,w) + R)$, where $v\otimes w\in V \otimes W$. The principal effect of taking the quotient by R in the free vector space is that the following equations hold in $V \otimes W$

\begin{eqnarray}
(v_1 + v_2) \otimes w &=& v_1 \otimes w + v_2 \otimes w\, , \nonumber\\ 
v \otimes (w_1 + w_2) &=& v \otimes w_1 + v \otimes w_2\, ,\nonumber\\ 
cv \otimes w &=& v \otimes cw = c(v \otimes w)\, .    
\end{eqnarray}     
    
\noindent    
The tensor product of two graded vector spaces $V$ and $W$ is defined to be another graded vector space $V\otimes W$ with grading

\begin{equation}
\left(V\otimes W\right)_{i} = \bigoplus_{i=j+k} V_{j}\otimes W_{k}\, , \qquad i=j+k\, .
\end{equation}

\noindent
The tensor product can be also applied to morphisms $f, g\in\mathrm{Hom}\left(V,W\right)$, and it is given by

\begin{equation}
\left(f\otimes g\right)\left(v\otimes w\right) = (-1)^{|g||f|}f(v)\otimes g(w)\, .
\end{equation}

\definition{\label{def:gradedtensoralgebra} Let $V$ be a graded vector space. The \emph{tensor algebra} $\mathcal{T}\left(V\right)$ is the graded vector space given by the collection of vector spaces

\begin{equation}
\mathcal{T}\left( V\right)_{m} = \bigoplus_{k\geq 0}\bigoplus_{j_{1}+\dots + j_{k} = m} V_{j_{1}}\otimes\dots\otimes V_{j_{k}}\, ,\qquad m\in\mathbb{Z}\, .
\end{equation}

\noindent
For $k=0$ the corresponding summand is set to be equal to $\mathbb{R}$.
 
}

\noindent
Every component $\mathcal{T}\left(V\right)_{m}$ can be decomposed with respect to the tensor product degree $\otimes$ as follows

\begin{equation}
\mathcal{T}^{k}\left( V\right)_{m}\equiv \mathcal{T}\left( V\right)_{m}\cap V^{\otimes k}\, ,\qquad k\geq 1\, .
\end{equation}

\noindent
For $k=0$ we have $\mathcal{T}^{0}\left(V\right) = \mathbb{R}$. The degree of an element $x_{1}\otimes\dots\otimes x_{k}\in V_{1}\otimes\dots\otimes V_{k}$ is defined as $|x_{1}\otimes\dots\otimes x_{k}| = \sum^{k}_{i=1}|x_{i}|$.


\noindent
The vector space $\mathcal{T}^{k}\left( V\right)_{m}$ carries two natural actions, \emph{even} and \emph{odd}, of the group $\Sigma_{k}$ of permutations of a set of $k$ elements. The \emph{even} representation intuitively corresponds to the elements of $\mathcal{T}^{k}\left( V\right)_{m}$ symmetric in the graded sense. It is defined by

\begin{equation}
\sigma\cdot \left( x_{1}\otimes\dots\otimes x_{k} \right)\equiv (-1)^{|x_{i}||x_{(i+1)}|} x_{1}\otimes\dots\otimes x_{(i+1)}\otimes x_{i}\otimes\dots\otimes x_{k}\, ,
\end{equation} 

\noindent
where $\sigma$ is the transposition of the $i$'th and the $(i+1)$'th element. Similarly, the odd representation intuitively corresponds to the antisymmetric elements of $\mathcal{T}^{k}\left( V\right)_{m}$ in the graded sense. It is defined by

\begin{equation}
\sigma\cdot\left(x_{1}\otimes\dots\otimes x_{k}\right)\equiv -(-1)^{|x_{i}||x_{(i+1)}|} x_{1}\otimes\dots\otimes x_{(i+1)}\otimes x_{i}\otimes\dots\otimes x_{k}\, ,
\end{equation} 

\noindent
where $\sigma$ is the transposition of the $i$'th and the $(i+1)$'th element. Notice that the even as well as the odd representations can be defined for an arbitrary element $\sigma\in\Sigma_{k}$ as follows

\begin{equation}
\sigma\cdot \left( x_{1}\otimes\dots\otimes x_{k} \right)\equiv \epsilon\left(\sigma ; x_{1},\cdots , x_{k}\right) x_{\sigma(1)}\otimes\dots\otimes x_{\sigma(k)}\, ,
\end{equation} 

\noindent
for the even representation and

\begin{equation}
\sigma\cdot \left( x_{1}\otimes\dots\otimes x_{k} \right)\equiv (-1)^{\sigma}\epsilon\left(\sigma ; x_{1},\cdots , x_{k}\right)  x_{\sigma(1)}\otimes\cdots\otimes x_{\sigma(k)}\, ,
\end{equation} 

\noindent
for the odd representation.

\definition{\label{def:symmtensor} Given a graded vector space $V$, the \emph{graded symmetric algebra} $\mathcal{S}(V)$ of $V$ is the graded vector space whose elements are the invariants or the even representation of $\Sigma$ on $\mathcal{T}(V)$ with the inherited grading.}

\definition{\label{def:antisymmtensor} Given a graded vector space $V$, the \emph{graded antisymmetric algebra} $\Lambda(V)$ of $V$ is the graded vector space whose elements are the invariants or the odd representation of $\Sigma$ on $\mathcal{T}(V)$ with the inherited grading.}

\noindent
We will denote by $\mathcal{S}^{k}\left( V\right)$ and $\Lambda^{k}\left( V\right)$ the homogeneous elements of degree $k$ of $\mathcal{S}\left( V\right)$ and $\Lambda\left( V\right)$ respectively.

We can extend the action of ${\bf s}^{\pm 1}$ to the tensor product of an arbitrary number of graded vector spaces $V_{1},\dots ,V_{k}$ as follows

\begin{eqnarray}
\label{eq:sk}
{\bf s}^{\pm k} : &&V_{1}\otimes\dots\otimes V_{k}\to {\bf s}^{\pm 1}V_{1}\otimes\dots\otimes {\bf s}^{\pm 1}V_{k}\nonumber\\
&&x_{1}\otimes\dots\otimes x_{k}\mapsto (-1)^{\sum^{k}_{i=1} (k-i)|x_{i}|}  {\bf s}^{\pm 1}x_{1}\otimes\dots\otimes {\bf s}^{\pm 1}x_{k}\, .
\end{eqnarray}

\noindent
Notice that the sign $(-1)^{\sum^{k}_{i=1} (k-i)|x_{i}|}$ can be understood by considering $s^{\pm 1}$ as an odd element which requires the introduction of the sign $(-1)^{|x_{i}|}$ every time it jumps over $x_{i}$ when acting on $x_{1}\otimes\cdots\otimes x_{k}$. $s^{\pm k}$ is an isomorphism of vector spaces which is not an isomorphism of graded vector spaces, since it does not preserve the grading. Defining now $\mathrm{dec}_{k}:V^{\otimes^{k}}\to V^{\otimes^{k}}$ by

\begin{equation}
\mathrm{dec}_{k}\left(x_{1}\otimes\dots\otimes x_{k}\right)\equiv (-1)^{\sum^{k}_{i=1} (k-i)|x_{i}|} x_{1}\otimes\dots\otimes x_{k}\, ,
\end{equation}

\noindent
we obtain the so-called \emph{d\'ecalage-isomorphism} between ${\bf s}^{\pm k}\mathcal{S}^{k}\left( V\right)$ and $\Lambda^{k}\left( {\bf s}^{\pm 1} V\right)$. The d\'ecalage-isomorphism preserves the grading and therefore ${\bf s}^{\pm k}\mathcal{S}^{k}\left( V\right)$ and $\Lambda^{k}\left( {\bf s}^{\pm 1} V\right)$ are isomorphic not only as vector spaces but also as graded vector spaces. 







Since it will be useful later, we will rewrite now the Koszul sign $\epsilon\left(\sigma ; s^{-1} x_{1},\hdots, s^{-1} x_{n}\right)$ in terms of $\epsilon\left(\sigma ;  x_{1},\hdots, x_{n}\right)$. In order to do so, we notice that if $\left(x_{1},\hdots,x_{k}\right)\in \Lambda^{k} V$ then

\begin{equation}
\left(x_{1},\hdots,x_{k}\right) = (-1)^{\sigma} \epsilon\left(\sigma ;  x_{1},\hdots, x_{k}\right) \left(x_{\sigma (1)},\hdots,x_{\sigma (k)}\right)\, ,
\end{equation}

\noindent
and therefore

\begin{equation}
s^{-k}\left(\left(x_{1},\hdots,x_{k}\right)\right) = (-1)^{\sigma} \epsilon\left(\sigma ;  x_{1},\hdots, x_{k}\right) s^{-k}\left(\left(x_{\sigma (1)},\hdots,x_{\sigma (k)}\right)\right)\, ,
\end{equation}

\noindent
which implies, using equation (\ref{eq:sk})

\begin{eqnarray}
(-1)^{\sum_{i=1}^{k}(k-i)|x_{\sigma (i)}|} (-1)^{\sigma}\epsilon\left(\sigma ; x_{1},\hdots, x_{k}\right) \epsilon\left(\sigma ; s^{-1} x_{1},\hdots, s^{-1} x_{k}\right) \left(s^{-1}x_{1},\hdots, s^{-1}x_{k}\right) \nonumber\\ = (-1)^{\sum_{i=1}^{k}(k-i)|x_{\sigma (i)}|} \left(s^{-1}x_{1},\hdots, s^{-1}x_{k}\right)\, .
\end{eqnarray}

\noindent
Hence we conclude

\begin{equation}
\label{eq:epsilondecalage}
\epsilon\left(\sigma ;  s^{-1} x_{1},\hdots, s^{-1} x_{k}\right) = (-1)^{\sum_{i=1}^{k}(k-i)\left(|x_{i}|+|x_{\sigma (i)}|\right)} (-1)^{\sigma}\epsilon\left(\sigma ; x_{1},\hdots, x_{k}\right) \, .
\end{equation}

\noindent
We finish this section by defining two specific kind of algebras that we will find later. When we introduce the \emph{Cartan calculus} in section \ref{sec:cartancalculus}, we will encounter a particular instance of \emph{Gerstenhaber algebra}, which is defined as follows

\definition{\label{def:gealgebra} A Gerstenhaber algebra is an associative and commutative graded algebra $(V,\cdot)$ equipped with a degree -1 bilinear map $[\cdot,\cdot ]: V\to V$ such that the following conditions hold

\begin{enumerate}

\item $[x_{1},x_{2}] = -(−1)^{(|x_{1}|-1)(|x_{2}|-1)} [x_{2},x_{1}]\, ,$ for any two homogeneous elements $x_{1}, x_{2} \in V$. That is, the bilinear map is antisymmetric in the graded sense in $s^{-1}V$. 

\item $[x,\cdot]$ is a derivation on $V$ of degree $|x|-1$ for every $x\in V$.

\item The bilinear map obeys 

\begin{equation}
\left[x_{1},\left[x_{2},x_{3}\right]\right] = \left[\left[x_{1},x_{2}\right],x_{3}\right] + (−1)^{(|x_{1}|-1)(|x_{2}|-1)}\left[x_{2},\left[x_{3},x_{1}\right]\right]\, ,
\end{equation}

\noindent
which becomes the graded Jacobi identity on $s^{-1}V$.

\end{enumerate}

}

\noindent
From the definition In the context of symplectic geometry in section \ref{sec:symplectic} we will find a particular example of a Poisson algebra, whose definition is given by

\definition{\label{def:poissonalgebra} A Lie algebra $\left( X,\cdot, [\cdot , \cdot]\right)$ is called a Poisson algebra if $X$ has a commutative, associative algebra structure such that

\begin{equation}
\label{eq:poissonalgebra}
\left[ x_{1} x_{2}, x_{3}\right] = x_{1}\left[x_{2},x_{3}\right] + \left[ x_{1},x_{3}\right] x_{2}\, ,
\end{equation}

\noindent
for all $x_{1}, x_{2}, x_{3} \in X$.
}


\subsection{Homological algebra}
\label{sec:homologicalalgebra}


The chain complex is the basic structure of homological algebra. It is defined as follows

\definition{\label{def:chaincomplex} A chain complex $C$  is a sequence $\left(C_{\bullet}, d_{\bullet}\right)$ of abelian groups and group homomorphisms

\begin{equation}
C_{\bullet}: \cdots \longrightarrow C_{n+1} \stackrel{d_{n+1}}{\longrightarrow} C_n \stackrel{d_n}{\longrightarrow} C_{n-1} \stackrel{d_{n-1}}{\longrightarrow} \cdots\, ,
\end{equation}

\noindent
such that 

\begin{equation}
d_n \circ d_{n+1}=0\, .
\end{equation}

\noindent
The abelian groups $C_{i}\, ,\,\, i\in\mathbb{Z}$ are the so-called $i$-chains and the homomorphisms $d_{i}$ are called the so-called boundary maps.

}

Analogously, one can define a co-complex by a simple relabelling of the $i$-chains and the homomorphisms $d_{i}$ as follows

\definition{\label{def:chaincocomplex} A cochain $C$  is a sequence $\left(C^{\bullet}, d^{\bullet}\right)$ of abelian groups and group homomorphisms

\begin{equation}
C^{\bullet}: \cdots \longrightarrow C^{n-1} \stackrel{d^{n-1}}{\longrightarrow} C^n \stackrel{d^n}{\longrightarrow} C^{n+1} \stackrel{d^{n+1}}{\longrightarrow} \cdots\, ,
\end{equation}

\noindent
such that 

\begin{equation}
\label{eq:ddcochain}
d^n \circ d^{n-1}=0\, .
\end{equation}

\noindent
The abelian groups $C^{i}\, ,\,\, i\in\mathbb{Z}$ are the so-called $i$-co-chains and the homomorphisms $d^{i}$ are called the so-called coboundary maps.

}

In this letter we will use the cochain notation. From equation (\ref{eq:ddcochain}) we immediately deduce that

\begin{equation}
B^{i} = \mathrm{Im}\, d^{i-1} \subseteq Z^{i} = \mathrm{Ker}\, d^{i}\, , \qquad i\in\mathbb{Z}\, ,
\end{equation}

\noindent
where $\mathrm{Ker}$ and $\mathrm{Im}$ respectively denote the kernel and image of the given map. Since subgroups of abelian groups are normal, we can define a new group by taking the corresponding quotient. We define this way the $i$th-cohomology group as follows

\begin{equation}
\label{eq:cohomologydef}
H^{i}\left(C\right) = \frac{Z^{i}}{B^{i}}\, , \qquad i\in\mathbb{Z}\, .
\end{equation}

\noindent
A cochain is called an exact sequence if all its cohomology groups are zero. The cochain groups $C^{i}$ may be endowed with extra structure; for example, they may be vector spaces or modules over a fixed ring $\mathbb{K}$. In that case, the coboundary operators must preserve the extra structure if it exists; for instance, they must be linear maps or homomorphisms of $\mathbb{K}$-modules.

Let $C = \left(C^{\bullet}, d_C^{\bullet}\right)$ and $D = \left(D^\bullet, d_D^{\bullet}\right)$ be cochain complexes. A morphism $F: C^\bullet\to D^\bullet$ between $C$ and $D$ is a family of homomorphisms of abelian groups $F^{i}:C^{i} \to D^{i}\, ,\, i\in\mathbb{Z}$ that commute with the co-boundary operators, that is

\begin{equation}
F^{i+1} \circ  d_{C}^{i} = d_{D}^{i} \circ F^{i}\, , \qquad i\in\mathbb{Z}\, .
\end{equation}

\noindent
A morphism of chain complexes induces a morphism $F_{H}$ of their homology groups, consisting of the homomorphisms $F_{H}^{i}\, ,\,\, i\in \mathbb{Z}$ defined by

\begin{equation}
F_{H}^{i}\left([x]\right) = \left[ F^{i}\left(x\right)\right]\, , \qquad \forall [x]\in H^{i}\left( C\right)\, ,\,\, i\in \mathbb{Z} \, .
\end{equation}

\noindent
A morphism $F$ such that $F_{H}$ is an isomorphism, that is, that induces an isomorphism in cohomology, is called a \emph{quasi-isomorphism}. The prominent example of cochain that will appear in this letter is the $L_{\infty}$-algebra, which we will introduce in chapter \ref{chapter:linfty}. For $L_{\infty}$-algebras we will need to introduce a less restrictive concept of morphism, which is adapted to the structure present in $L_{\infty}$-algebras.

Suppose that there exists a map $f: X\to Y$ between two objects $X$ and $Y$. Then, Homological algebra studies the relation, induced by the map $f$, between chain complexes (or co-complexes) associated with $X$ and $Y$ and their homology (or cohomology).

\begin{ep}
\label{ep:deRhamcohomology} Given a manifold $\mathcal{M}$ we can construct the complex 

\begin{equation}
C^{i} \equiv \Omega^{i} \left(\mathcal{M}\right)\, ,
\end{equation}

\noindent
where $\Omega^{i} \left(\mathcal{M}\right)$ denotes the set of $i$-forms on $\mathcal{M}$\footnote{See section \ref{sec:differentialgeometry} for more details.}. The co-boundary operator is the exterior derivative $d^{i}: \Omega^{i} \left(\mathcal{M}\right)\to \Omega^{i+1} \left(\mathcal{M}\right)$. The corresponding cohomology is the \emph{de-Rham cohomology}

\begin{equation}
H^{i}\left(\mathcal{M}\right)=\frac{\mathrm{Ker}\, d^{i}}{\mathrm{Im}\, d^{i-1}}\, .
\end{equation}

\noindent
The de-Rahm cohomology groups give important information about the manifold where it is defined. For instance, the zero cohomology group $H^{0}\left(\mathcal{M}\right)$ of any differentiable manifold $\mathcal{M}$ is given by

\begin{equation}
H^{0}\left(\mathcal{M}\right)\simeq \mathbb{R}^{n}\, ,
\end{equation}

\noindent
where $n$ is the number of connected components of $\mathcal{M}$. This  can be easily seen from the fact that any function $f\in C^{\infty}\left( \mathcal{M}\right)$ such that $df=0$ is constant on each of the connected component of $\mathcal{M}$. Therefore, the dimension of the $H^{0}\left(\mathcal{M}\right)$ gives the number of connected componentes of $\mathcal{M}$.
\end{ep}

\begin{ep}
\label{ep:Liealgebracohomology} {\bf Lie Algebra Cohomology}. Let $\mathfrak{g}$ be a Lie algebra. We define the following cochain complex $C = \left( C^{\bullet}, \delta^{\bullet}\right)$ 

\begin{equation}
C^{i} \equiv \Lambda^{i} \mathfrak{g}^{\ast}\, ,
\end{equation}

\noindent
with coboundary operator $\delta^{k}:\, C^{k}\to  C^{k+1}$ given by

\begin{eqnarray}
\delta^{k} c\left(x_{1},\cdots ,x_{k}\right) = \sum_{1\leq i<j\leq k} (-1)^{i+j} c\left([x_{i}, x_{j}],x_{1},\dots ,\hat{x}_{i},\dots ,\hat{x}_{j},\dots ,x_{k}\right)\, ,
\end{eqnarray}

\noindent
for all $x_{1},\cdots, x_{k}\in\,\mathfrak{g}$. Notice that we interpret an element $c\in C^{i}$ as an alternating $i$-linear operator $c:\, \mathfrak{g}^{\times k}\to \mathbb{R}$. $\delta$ is in fact a coboundary operator; it can be checked that $\delta^{2} = 0$. We can introduce now the \emph{Lie algebra cohomology groups}, or \emph{Chevalley cohomology groups}, of $\mathfrak{g}$ using (\ref{eq:cohomologydef})

\begin{equation}
H^{i}\left(\mathfrak{g}, \mathbb{R}\right)\equiv \frac{\mathrm{Ker}\,\delta^{i}}{\mathrm{Im}\,\delta^{i-1}}\, .
\end{equation}

\noindent
which in fact contain the same information as the de-Rham cohomology groups of $G$, in the following sense

\thm{\label{thm:Liealgebraderham} If $\mathfrak{g}$ is the Lie algebra of compact connected Lie group $G$, then

\begin{equation}
H^{i}\left(\mathfrak{g}, \mathbb{R}\right) = H^{i}_{\mathrm{de\, Rham}}\left(G\right)\, .
\end{equation}
}

\end{ep}


\subsection{Coalgebras} 
\label{sec:coalgebras}


Coalgebras are structures that are dual, in the category-theoretic sense of reversing arrows, to  algebras. An algebra $\left( V,\cdot \right)$ is, as explained in section \ref{sec:higheralgebra}, a vector space equipped with a product $\cdot$, which defines the following application

\begin{eqnarray}
\cdot \maps V\otimes V &\to & V\nonumber\\
(x_{1}, x_{2}) &\mapsto & x_{1}\cdot x_{2}\, .
\end{eqnarray}

\noindent
Therefore, we should expect a coalgebra $C$ to be equipped with some map in the opposite direction

\begin{equation}
C\to C\otimes C\, .
\end{equation}

\noindent
The precise definition goes as follows. 

\definition{\label{def:coalgebra}
A graded coalgebra $\left( C, \Delta\right)$ is a graded vector space $C$ equipped with a
linear map $\Delta \maps C \to C \otimes C$, the so-called \emph{comultiplication}, such that
 
\begin{equation}
\Delta\left(C_{i}\right) \subset \bigoplus_{j+k=i} C_{j} \otimes C_{k}\, .
\end{equation} 

}

\noindent
A coalgebra $\left( C,\Delta\right)$ is said to be \emph{coassociative} if the following condition holds

\begin{equation}
\label{eq:coassociativity}
\left( \Delta \otimes \mathrm{id} \right) \Delta = \left(\mathrm{id} \otimes \Delta \right) \Delta\, .
\end{equation}

\noindent
Equation (\ref{eq:coassociativity}) is just the coalgebra version of the associativity condition for an algebra. Similarly, we can define a \emph{cocommutative} coalgebra by imposing the dual condition of commutativity in an algebra. Indeed, if $\left( C ,\Delta\right)$ is a coalgebra, let us denote by $T : C \otimes C \to C$ the twist map $T\left( x \otimes y\right) = (-1)^{|x| |y|} y \otimes x$. It is said that $\left( C,\Delta\right)$ is cocommutative if and only if $T \circ \Delta = \Delta$. As well as a the concept of indentity can be defined for algebras, we can introduce a similar structure for coalgebras, called the \emph{identity}. 

\definition{\label{def:counit}
A counit for $\left(C,\Delta\right)$  is a linear map $\epsilon : C \to \mathbb{R}$ such that $(\epsilon\otimes \mathrm{id}) \Delta = (\mathrm{id} \otimes \epsilon) \Delta=\mathrm{id}$.}

\noindent
In order to elucidate the structure present in a coalgebra, the following commutative diagrams may be illustrative 

\begin{center}
\begin{tikzpicture}
\label{diag:coalgebra1}
  \matrix (m) [matrix of math nodes,row sep=8em,column sep=9em,minimum width=2em]
  {
C & C\otimes C \\
C\otimes C & C\otimes C\otimes C \\};
  \path[-stealth]
    (m-1-1) edge node [left] {$\Delta$} (m-2-1)
    (m-1-1) edge node [above] {$\Delta$} (m-1-2)
    (m-2-1) edge node [above] {$\Delta\otimes \mathrm{id}$} (m-2-2)
    (m-1-2) edge node [right] {$\mathrm{id}\otimes \Delta$} (m-2-2);
\end{tikzpicture}

\begin{tikzpicture}
\label{diag:coalgebra2}
  \matrix (m) [matrix of math nodes,row sep=8em,column sep=9em,minimum width=2em]
  {
C & C\otimes C \\
C\otimes C & \mathbb{R}\otimes C\simeq C\simeq C\otimes \mathbb{R} \\};
  \path[-stealth]
    (m-1-1) edge node [left] {$\Delta$} (m-2-1)
    (m-1-1) edge node [above] {$\Delta$} (m-1-2)
    (m-2-1) edge node [above] {$\epsilon\otimes \mathrm{id}$} (m-2-2)
    (m-1-1) edge node [above] {$\mathrm{id}$} (m-2-2)
    (m-1-2) edge node [right] {$\mathrm{id}\otimes \epsilon$} (m-2-2);
\end{tikzpicture}
\end{center}

\noindent
Diagram (\ref{diag:coalgebra1}) is the dual, in the category-theoretic sense of reversing arrows, of the analogous diagram that express associativity of algebra multiplication. Diagram (\ref{diag:coalgebra2}) is the dual, in the same sense as before, of the analogous diagram expressing the existence of an identity in a unital algebra. 

\definition{\label{def:coaugmented} A coalgebra $\left( C, \Delta\right)$ with counit $\epsilon$ is  \emph{coaugmented} if and only if it can be equipped with an injective linear map

\begin{eqnarray}
\mathbb{R} &\hookrightarrow & C\\
1 &\mapsto & 1_{C}
\end{eqnarray}

\noindent
such that $\epsilon\left( 1_{C}\right) = \mathrm{1}$ and $\Delta\left( \mathrm{1}_{C}\right)=\mathrm{1}_{C}\otimes \mathrm{1}_{C}$. We can write in that case $\bar{C} = \ker \epsilon$ so that we have $C \simeq \mathbb{R} \oplus \bar{C}$ as vector spaces.}

\noindent
Intuitively, if a coalgebra $C$ admits a coaugmentation, then it contains a copy of $\mathbb{R}$.  For a given coaugmented coalgebra, we define the \emph{reduced comultiplication} $\bar{\Delta} \maps \bar{C} \to\bar{C} \otimes \bar{C}$ as follows

\begin{equation}
\bar{\Delta} c = \Delta c - c \otimes \mathrm{1}_{C} - \mathrm{1}_{C}\otimes c.
\end{equation}

\noindent
The equation above makes $\bar{C}$ into a coalgebra with no counit. We denote now the \textbf{reduced diagonal} by $\Delta^{(n)}$. It can be recursively defined as follows

\begin{eqnarray}
\label{eq:red_diag}
\bar{\Delta}^{(0)}&=&\mathrm{id}\nonumber\\
\bar{\Delta}^{(1)}&=&\bar{\Delta}\\
\bar{\Delta}^{(n)} &=& \left( \bar{\Delta} \otimes \mathrm{id}^{\otimes(n-1)} \right) \circ
\bar{\Delta}^{(n-1)} \maps \bar{C} \to \bar{C}^{\otimes (n+1)}\nonumber\, .
\end{eqnarray} 

\noindent
It can be shown by induction that we can rewrite $\rdDelta{n}$ as

\begin{equation} 
\label{eq:red_diag2}
\bar{\Delta}^{n} = \left( \bar{\Delta}^{(n-1)} \otimes \mathrm{id} \right) \circ \bar{\Delta}\, .
\end{equation}

\noindent
A coaugmented coalgebra $\left(C,\Delta,\epsilon,\mathrm{1}_{C}\right)$ has always a canonical filtration\footnote{A filtration is an indexed set $S_{i}$ of sub-objects of a given algebraic structure $S$, with the index $i$ taking values in a totally ordered set $I$, subject to the condition that if $i \leq j$ in $I$ then $S_{i} \subseteq S_{j}$.} which can be defined recursively as follows

\begin{eqnarray}
\label{eq:filter_def}
F_{0} C &=& \mathbb{R} \cdot \mathrm{1}_{C} \\
F_{k} C &=& \left\{ x \in \bar{C} ~ \vert ~ \rDelta x \in F_{k-1}C \otimes F_{k-1} C \right\}\, ,
\end{eqnarray}

\noindent
and it is \emph{connected} if and only if 

\begin{equation}
C = \bigcup F_{k} C\, .
\end{equation}

\noindent
If $\left(C,\Delta,\epsilon,\mathrm{1}_{C}\right)$ is connected, it can be proven that the coaugmentation $\mathrm{1}_{C}$ is unique \footnote{ See proposition 3.1 in section B3 of reference \cite{Quillen:1969}}.

\begin{ep}
\label{ep:gradedS} Graded symmetric algebra. Let $V$ be a graded vector space. Then the the graded symmetric algebra is given by

\begin{equation}
S(V) = \mathbb{R} \oplus \sum_{k=1}^{\infty} S^{k} V   =\mathbb{R} \oplus \S(V)
\end{equation}

\noindent
is a coaugmented cocommutative coalgebra in a natural way. The comultiplication $\Delta$ is defined as the unique morphism of algebras such that $\Delta(v) = v \otimes \mathrm{1} + \mathrm{1} \otimes v$ holds for all $v\in V$, assuming that $\Delta (1) = 1\otimes 1$. The counit is defined as the projection $S\left(V\right) \to \mathbb{R}$, and the coaugmentation is given by the inclusion $\mathbb{R}
\hookrightarrow S\left(V\right)$. The reduced comultiplication $\bar{\Delta}$ on $\S(V)$ can be given explicitly by
 
\begin{eqnarray}
 \label{eq:redcomult}
 \bar{\Delta}(v_{\mathrm{1}} \odot v_{2} \odot \cdots \odot v_{n}) &=&
  \sum_{1 \leq p \leq n-1} \sum_{\sigma \in \Sh\left(p,n-p\right)} \epsilon\left(\sigma\right)
  \left(v_{\sigma\left(1\right)} \odot v_{\sigma\left(2\right)} \odot \cdots \odot v_{\sigma\left(p\right)} \right)\nonumber\\
  & \otimes &\left ( v_{\sigma\left(p+1\right)} \odot v_{\sigma\left(p+2\right)} \odot \cdots \odot v_{\sigma\left(n\right)}
  \right)\, ,
\end{eqnarray}

\noindent
where $\odot$ denotes the symmetrized tensor product. That is, if $v_{1}, \cdots, v_{p}\in V$ are $p\in\mathbb{N}$ homogeneous elements then we have

\begin{equation}
v_{1}\odot \cdots \odot v_{p} = \frac{1}{p!} \sum_{\sigma\in \Sigma_{p}} v_{\sigma (1)}\otimes\cdots\otimes v_{\sigma (p)} \, .
\end{equation}
\end{ep}

\noindent
The following lemma, which we extract from \cite{2013arXiv1301.4864F},  will be needed in order to properly rewrite $L_{\infty}$-morphisms.

\begin{lemma} 
\label{lemma:red_diag_lemma}
If $v_{\mathrm{1}} \odot v_{\mathrm{2}} \odot \cdots \odot v_{n} \in \bar{S}\left( V\right)$, and $ \mathrm{1} \leq p \leq n-\mathrm{1}$ then

\begin{eqnarray}
\bar{\Delta}^{p}\left( v_{\mathrm{1}} \odot \cdots \odot v_{n}\right) &=& \sum^{k_{1} + k_{2} +
  \cdots + k_{p+1}=n}_{k_{1},k_{\mathrm{2}},\hdots,k_{p+1} \geq 1} \sum_{\sigma
  \in \Sh(k_{\mathrm{1}},k_{2},\hdots,k_{p+1})} \epsilon\left(\sigma\right) v_{\sigma\left( 1\right)}
\odot \cdots \odot v_{\sigma\left(k_{\mathrm{1}}\right)}\nonumber \\
 &\otimes & v_{\sigma(k_{1} + 1)} \odot \cdots \odot
v_{\sigma\left(k_{1}+k_{2}\right)} \otimes v_{\sigma\left(k_{1} +k_{2} + 1\right)} \odot \cdots \odot
v_{\sigma(k_{1}+k_{2} +k_{3})} \otimes \cdots \nonumber\\
&\otimes & v_{\sigma(m-k_{p+1} + 1)} \odot \cdots \odot v_{\sigma(n)}\, ,
\end{eqnarray}

\noindent
and in particular we have

\begin{equation}
\bar{\Delta}^{(n-1)}\left(v_{\mathrm{1}} \odot \cdots \odot v_{n}\right) = \sum_{\sigma \in
  \mathcal{S}_{n}} \epsilon(\sigma) v_{\sigma\left(\mathrm{1}\right)} \otimes v_{\sigma\left(\mathrm{2}\right)}
  \otimes \cdots \otimes v_{\sigma\left( n\right)}\, . 
\end{equation}

\end{lemma}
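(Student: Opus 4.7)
The plan is to proceed by induction on $p$, using the recursive characterization of $\bar{\Delta}^{(n)}$ in equation~\eqref{eq:red_diag2}, namely $\bar{\Delta}^{(p)} = \left(\bar{\Delta}^{(p-1)} \otimes \mathrm{id}\right) \circ \bar{\Delta}$. The base case $p=1$ is simply the definition of the reduced comultiplication displayed in \eqref{eq:redcomult}, where the unshuffles of type $(k_1, k_2)$ with $k_1 + k_2 = n$ (and $k_1, k_2 \geq 1$) reproduce exactly the formula of Example~\ref{ep:gradedS}.

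For the inductive step, assume the formula holds for $\bar{\Delta}^{(p-1)}$. First I would apply $\bar{\Delta}$ to obtain
\begin{equation*}
\bar{\Delta}(v_1 \odot \cdots \odot v_n) = \sum_{m=1}^{n-1} \sum_{\rho \in \Sh(m, n-m)} \epsilon(\rho)\, (v_{\rho(1)} \odot \cdots \odot v_{\rho(m)}) \otimes (v_{\rho(m+1)} \odot \cdots \odot v_{\rho(n)}),
\end{equation*}
then apply $\bar{\Delta}^{(p-1)}$ to the first tensor factor using the inductive hypothesis, with block-sizes $k_1, \ldots, k_p$ summing to $m$ and a corresponding unshuffle $\tau \in \Sh(k_1, \ldots, k_p)$ acting on the re-indexed set $\{1, \ldots, m\}$. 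Setting $k_{p+1} := n-m$, the resulting $(p+1)$-fold tensor has exactly the shape claimed in the statement.

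The substantive content of the step is the combinatorial bijection
\begin{equation*}
\Sh(k_1, \ldots, k_{p+1}) \;\longleftrightarrow\; \Sh(k_1 + \cdots + k_p,\, k_{p+1}) \times \Sh(k_1, \ldots, k_p),
\end{equation*}
which asserts that every $(k_1, \ldots, k_{p+1})$-unshuffle of $\{1, \ldots, n\}$ factors uniquely as a "coarse" unshuffle separating the last block of size $k_{p+1}$, followed by a refined unshuffle of the remaining $m = k_1 + \cdots + k_p$ elements into blocks of sizes $k_1, \ldots, k_p$. This is where I expect the main technical bookkeeping to lie, together with verifying that the Koszul signs multiply correctly: one uses the functoriality $\epsilon(\rho \cdot \tau; v_1, \ldots, v_n) = \epsilon(\rho; v_1, \ldots, v_n)\, \epsilon(\tau; v_{\rho(1)}, \ldots, v_{\rho(m)})$, which follows from the defining equation of the Koszul sign applied stepwise in the free graded commutative algebra on $V$. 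Matching these signs against the sign $\epsilon(\sigma)$ of the composite unshuffle $\sigma \in \Sh(k_1, \ldots, k_{p+1})$ completes the induction.

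For the "in particular" assertion, I would specialize to $p = n-1$. The only admissible choice of block-sizes is $k_1 = k_2 = \cdots = k_n = 1$, and an unshuffle of type $(1, 1, \ldots, 1)$ of $\{1, \ldots, n\}$ is just an arbitrary element of $\Sigma_n$, since the monotonicity condition within a block of size one is vacuous. Each block $v_{\sigma(k_1 + \cdots + k_{j-1}+1)} \odot \cdots \odot v_{\sigma(k_1 + \cdots + k_j)}$ reduces to the single element $v_{\sigma(j)}$, so the general formula collapses to $\sum_{\sigma \in \Sigma_n} \epsilon(\sigma)\, v_{\sigma(1)} \otimes \cdots \otimes v_{\sigma(n)}$, as claimed.
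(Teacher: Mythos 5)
The paper does not actually prove this lemma: its ``proof'' consists of the single line ``See lemma A.1 in the cited reference,'' so there is no internal argument to measure you against. Your inductive proof is the standard one (and, as far as the structure goes, the one carried out in that reference): the base case $p=1$ is the explicit formula \eqref{eq:redcomult}, and the inductive step rests on the recursion \eqref{eq:red_diag2} together with the factorization of a $(k_{1},\hdots,k_{p+1})$-unshuffle into a coarse $(k_{1}+\cdots+k_{p},\,k_{p+1})$-unshuffle followed by a refinement of the first $m=k_{1}+\cdots+k_{p}$ positions, with the Koszul signs matching via the cocycle identity $\epsilon(\rho\tau;v_{1},\hdots,v_{n})=\epsilon(\rho;v_{1},\hdots,v_{n})\,\epsilon(\tau;v_{\rho(1)},\hdots,v_{\rho(m)})$. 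Both of these auxiliary facts are true, and you correctly identify them as the only substantive content of the step; a fully self-contained write-up would spell out the bijection (given $\sigma\in\Sh(k_{1},\hdots,k_{p+1})$, let $\rho$ be the unique $(m,k_{p+1})$-unshuffle sending $\{1,\hdots,m\}$ onto $\sigma(\{1,\hdots,m\})$; then $\tau=\rho^{-1}\sigma$ fixes $\{m+1,\hdots,n\}$ pointwise and restricts to a $(k_{1},\hdots,k_{p})$-unshuffle of $\{1,\hdots,m\}$) and verify the sign identity from the defining equation of $\epsilon$ in the free graded commutative algebra, but neither step can fail. The specialization to $p=n-1$ is also handled correctly. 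In short: your route is sound and is the natural proof the paper outsources; what it buys over the paper's citation is self-containedness, at the cost of the unshuffle/sign bookkeeping that you have sketched rather than executed.
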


\proof{See lemma A.1 in \cite{2013arXiv1301.4864F}.}

\noindent
Lemma \ref{lemma:red_diag_lemma} implies that $\ker \bar{\Delta}^{(k)} =\bar{S}^{\bullet \leq  k}\left( V\right)$ for  $k \geq 0$ and also that

\begin{equation} 
\label{eq:prim_cogen}
\bar{S}\left( V\right) = \bigcup_{n} \ker \bar{\Delta}^{(n)}\, . 
\end{equation}

\noindent
The filtration $F_{n}S\left( V\right)$ corresponds, for $n \geq 1$, to the filtration on $\bar{S}\left( V \right)$ defined by $\ker \bar{\Delta}^{(n)}$. This proves that $S(V)$ is a connected coalgebra.

In order to define $L_{\infty}$-algebras and $L_{\infty}$-algebra morphisms, the concepts of coalgebra differential and coalgebra morphism are going to prove essential. 

\definition{\label{def:coalgebraderivation}
A \emph{codifferential} of degree one on a coalgebra $\left(C,\Delta\right)$ is a linear map $Q \maps C^{i} \to C^{i+1}$ satisfying

\begin{equation}
\quad Q \circ Q =0\, ,
\end{equation}

\noindent
and the coLeibniz identity

\begin{equation}
\Delta Q = \left (Q \otimes \mathrm{id} \right) \Delta +  (\mathrm{id} \otimes Q ) \Delta\, .
\end{equation}
}

\noindent
In the case of a codifferential $Q$ defined on connected coalgebra $(C,\Delta,\epsilon,\mathrm{1}_{C})$, we require an additional condition, namely

\begin{equation}
Q(1_{C})=0\, .
\end{equation}

\noindent
A codifferential defined on a connected coalgebra  $(C,\Delta,\epsilon,\mathrm{1}_{C})$ is uniquely given by its restriction to $\bar{C}$, which satisfies the coLeibniz identity with respect to $\bar{\Delta}$. 

Let us consider the particular case $C=\bar{\mathcal{T}}\left(V\right)$. Given a codifferential $Q$ on $\bar{\mathcal{T}}(V)$, consider the restrictions

\begin{equation}
\label{eq:restrictQ}
Q_{m}=Q \vert_ {\bar{\mathcal{T}}^{m}\left( V\right)} : \bar{\mathcal{T}}^{m}\left( V\right) \to \bar{\mathcal{T}}(V), \quad 1 \leq m < \infty\, ,
\end{equation}

\noindent
so that $Q = \sum_{k}^{\infty} Q_{k}$, and the projections

\begin{equation} 
\label{eq:QprojT}
Q^{k}_{m} = \mathrm{pr}_{\bar{\mathcal{T}}^{k}\left( V\right)} \circ Q_{m} \maps \bar{\mathcal{T}}^{m}(V) \to
\bar{\mathcal{T}}^{k}(V)\, .
\end{equation}

Then, the following proposition holds.

\prop{\label{prop:codifferentialrestriction} A coderivation $Q$ of $\bar{\mathcal{T}}\left(V\right)$, respectively $\bar{S}\left(V\right)$, is uniquely determined by the collection $Q^{1}_{i}\, ,\,\, i\in\mathbb{N}^{+}$ by the formula

\begin{eqnarray}
\label{eq:coder_eqT}
Q_{m}\left( x_{\mathrm{1}} \otimes \cdots \otimes x_{m}\right) = 
Q^{1}_{m}( x_{\mathrm{1}} \otimes \cdots \otimes  x_{m}) +\nonumber \\
\sum^{m-1}_{i=1} \sum_{\sigma \in \Sh(i,m-i)}
\epsilon(\sigma) Q^{1}_{i}\left( x_{\sigma(1)} \otimes \cdots \otimes
 x_{\sigma(i)} \right)\otimes  x_{\sigma\left(i+\mathrm{1}\right)} \otimes \cdots \otimes  x_{\sigma(m)}\, ,
\end{eqnarray}

}

\proof{See lemma 2.4 in \cite{Lada:1994mn} or appendix A in \cite{2013arXiv1301.4864F}.}

\definition{\label{def:coalgebradmorphism2}A morphism between connected coalgebras $(C_{1},\Delta_{1},\epsilon_{1},1_{C_{1}})$ and $(C_{2},\Delta_{2},\epsilon_{2},1_{C_{2}})$ is a degree zero linear map $F \maps C_{1} \to C_{2}$ satisfying

\begin{equation} 
\Delta_{\mathrm{2}} \circ F = \left(F \otimes F\right) \circ \Delta\, ,
\end{equation}

\begin{equation}
\label{eq:presvcounit}
\epsilon_{\mathrm{1}} = \epsilon_{\mathrm{2}} \circ F\, .
\end{equation}

\noindent
We have that, since $\mathbb{R}$ is a field, $F$ automatically preserves the coaugmentations. Therefore, it can be uniquely determined by its restriction to $\bar{C}$. Hence,  morphisms between such coalgebras correspond to degree zero linear maps
$F : \bar{C}_{1} \to \bar{C}_{2}$ that satisfy $\bar{\Delta}_{2} \circ F = \left( F \otimes F\right) \circ \bar{\Delta}_{\mathrm{1}}$. The condition (\ref{eq:presvcounit}) is just the dual of the condition 

\begin{equation}
\phi\left(1_{X_{1}}\right) = 1_{X_{2}}\, ,
\end{equation}

\noindent
where $X_{i}\, ,\, i =1,2$
}

\prop{\label{prop:morphismrestriction} Let $\left(C,\Delta,\epsilon,1_{C}\right)$ be a connected coalgebra and let $f:\, \bar{C}\to V$ be a degree zero linear map from $\bar{C}=\mathrm{ker}\, \epsilon$ to a graded vector space $V$. Then, there exists a unique morphism of connected coalgebras $F:\, C\to S\left(V\right)$ such that $\mathrm{pr}_{V}\circ F|_{\bar{C}} = f$, where $\mathrm{pr}_{V}:\, S\left(V\right)\to V$ is the corresponding proyection.}

\proof{See references \cite{Quillen:1969} and \cite{rational}.}

\noindent
Since it will be important in chapter \ref{chapter:linfty} for the study of $L_{\infty}$ algebras, let us consider the particular case when $\bar{C} = \bar{S}\left( V\right)$. We define then the degree zero linear map

\begin{equation}
F^{1} : \bar{S}\left( V_{1}\right)\to V_{2}\, ,
\end{equation}

\noindent
where $V_{1}$ and $V_{2}$ are graded vector spaces. Additionally, we define the restrictions $F^{1}_{k}$ as follows

\begin{equation}
F^{1}_{k} = F^{1}|_{\bar{S}^{k}\left( V_{1}\right)}\, ,
\end{equation}

\noindent
and hence

\begin{equation}
F^{1} = \sum^{\infty}_{k} F^{1}_{k}\, .
\end{equation}

\noindent
Then, the following corollary of proposition \ref{prop:morphismrestriction} holds

\begin{cor}
\label{cor:morphismrestriction}
Let $V_{1}$ and $V_{2}$ be graded vector spaces and let $F^{1} \maps \bar{S}\left( V_{1}\right) \to  V_{2}$  be a degree zero linear map. There exists then a unique morphism of coalgebras

\begin{equation}
F : S\left( V_{\mathrm{1}}\right) \to S\left( V_{\mathrm{2}}\right)\, ,
\end{equation}

\noindent
such that it satisfies $\mathrm{pr}_{V_{2}} \circ F \vert_{\bar{S}\left( V_{2}\right)} = F^{1}$, where $\mathrm{pr}_{V_{2}}$ is the corresponding projection to $V_{2}$.
\end{cor}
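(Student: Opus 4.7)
The plan is essentially to recognize this corollary as a direct specialization of Proposition \ref{prop:morphismrestriction}, so the work is to verify the hypotheses and then invoke it. First I would note that the symmetric coalgebra $S(V_{1})$, as constructed in Example \ref{ep:gradedS}, is a coaugmented cocommutative coalgebra with counit given by the projection $S(V_{1}) \to \mathbb{R}$ and coaugmentation given by the inclusion $\mathbb{R} \hookrightarrow S(V_{1})$. Moreover, the paragraph following equation \eqref{eq:prim_cogen} established that the canonical filtration \eqref{eq:filter_def} on $S(V_{1})$ satisfies $S(V_{1}) = \bigcup_{n} F_{n} S(V_{1})$, i.e.\ $S(V_{1})$ is connected in the sense of the definition immediately preceding Example \ref{ep:gradedS}.

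With that in hand I would take $C = S(V_{1})$, so $\bar{C} = \ker \epsilon = \bar{S}(V_{1})$, and take the target graded vector space to be $V = V_{2}$. The hypothesis required by Proposition \ref{prop:morphismrestriction} is precisely a degree zero linear map from $\bar{C}$ to $V$, and this is exactly the datum $F^{1} \colon \bar{S}(V_{1}) \to V_{2}$ given in the statement of the corollary.

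Applying Proposition \ref{prop:morphismrestriction} directly then yields a unique morphism of connected coalgebras $F \colon S(V_{1}) \to S(V_{2})$ satisfying $\mathrm{pr}_{V_{2}} \circ F|_{\bar{S}(V_{1})} = F^{1}$, which is exactly the statement to be proved. There is no real obstacle here since Proposition \ref{prop:morphismrestriction} does all the heavy lifting; the only subtlety worth flagging explicitly is that $S(V_{2})$ is itself a connected coaugmented coalgebra (again by Example \ref{ep:gradedS}), so that the conclusion of the proposition, which produces a morphism of connected coalgebras into $S(V)$, gives the right type of object. The hardest part, if any, would be verifying the connectedness of $S(V_{1})$, but this has already been handled in the text preceding the corollary, so the proof reduces to citing the proposition.
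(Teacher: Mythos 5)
Your proposal is correct and matches the paper's approach exactly: the paper presents this corollary precisely as a specialization of Proposition \ref{prop:morphismrestriction} to the case $\bar{C}=\bar{S}(V_{1})$, with connectedness of $S(V_{1})$ already established via equation \eqref{eq:prim_cogen} and the subsequent discussion of the filtration. You also implicitly correct the typo in the statement (the restriction should be to $\bar{S}(V_{1})$, the domain of $F^{1}$, not $\bar{S}(V_{2})$), which is the right reading.
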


\noindent
The construction of the coalgebra morphism $F$ for this particular case can be found in proposition A.2 of \cite{2013arXiv1301.4864F}. 


\section{Differential geometry basics}
\label{sec:differentialgeometry}


Let $\mathcal{M}$ be a topological space. $\mathcal{M}$ is said to be Hausdorff or $\mathrm{T}_2$ if for every pair points $p\, , q\,\in \mathcal{M}$ there exist neighbourhoods $\mathcal{U}(p)\, ,\mathcal{U}(q)$ of $p$ and $q$, such that $\mathcal{U}(p)\cap \mathcal{U}(q) = \emptyset$. $\mathcal{M}$ is a second-countable space if its topology has a countable base, that is, if there exists a countable collection $\left\{\mathcal{U}_{i}\right\}^{\infty}_{i=1}$ of open sets such that any open set in $\mathcal{M}$ can be written as a subfamily of the collection. An \emph{open chart} on $\mathcal{M}$ is a pair $(\mathcal{U},\phi)$, where $\mathcal{U}$ is an open subset of $\mathcal{M}$ and $\phi$ is a homeomorphism of $\mathcal{U}$ onto an open subset of $\mathbb{R}^{n}$.

\definition{\label{def:diffstructure} Let $\mathcal{M}$ be a Hausdorff, second-countable, topological space. A \emph{differentiable structure} on $\mathcal{M}$ is a collection of open charts $(\mathcal{U}_{\alpha},\phi_{\alpha})_{\alpha\in I}$ on $\mathcal{M}$ such that the following conditions hold

\begin{enumerate}

\item \label{c1M} $\mathcal{M} = \bigcup_{\alpha\in I} \mathcal{U}_{\alpha}$

\item \label{c2M} $\phi_{\alpha}\left(\mathcal{U}_{\alpha}\right)$ is an open set of $\mathbb{R}^{n}$ for all $\alpha\in I$ and for each pair $\alpha\, , \beta \in I$ $\phi_{\beta}\circ\phi^{-1}_{\alpha}$ is a differentiable\footnote{By differentiable we will always mean, unless otherwise stated, infinitely differentiable or $C^{\infty}$.} mapping of $\phi_{\alpha}\left(\mathcal{U}_{\alpha}\cap \mathcal{U}_{\beta} \right)$ onto $\phi_{\beta}\left(\mathcal{U}_{\alpha}\cap \mathcal{U}_{\beta} \right)$.

\item \label{c3M} The collection $(\mathcal{U}_{\alpha},\phi_{\alpha})_{\alpha\in I}$ is a maximal family of open charts for which conditions \ref{c1M}. and \ref{c2M}. hold. $(\mathcal{U}_{\alpha},\phi_{\alpha})_{\alpha\in I}$ is then called the \emph{atlas} of $\mathcal{M}$.

\end{enumerate}

}

\definition{\label{def:manifold} A \emph{differentiable manifold} of dimension $n$ is a Hausdorff, second-countable, topological space equipped with a differentiable structure of dimension $n$. }

\noindent
\newline If $\mathcal{M}$ is a manifold, a \emph{local chart} or \emph{local coordinate system} on $\mathcal{M}$ is a pair $(\mathcal{U}_{\alpha},\phi_{\alpha})$ where $\alpha\in I$. For every $p\in\mathcal{U}_{\alpha}\, , \alpha\in I$, $\mathcal{U}_{\alpha}$ is called a \emph{coordinate neighbourhood} of $p$ and the numbers $\phi_{\alpha}(p)=({\bf x}^{1}(p),\dots,{\bf x}^{n}(p))$ are the local coordinates of $p$. Condition \ref{c3M} is not essential in the definition of a manifold, since if     only \ref{c1M} and \ref{c2M} are satisfied, the family $(\mathcal{U}_{\alpha},\phi_{\alpha})_{\alpha\in I}$ can be extended in a unique way to a family of charts such that \ref{c1M}, \ref{c2M} and \ref{c3M} are fulfilled.

\noindent
Since a manifold $\mathcal{M}$ is locally homeomorphic to $\mathbb{R}^{n}$, it shares the same local topological properties. In particular, manifolds are locally compact and locally connected. That means, respectively, that every point $p\in\mathcal{M}$ has a compact neighbourhood and a connected neighbourhood. Using that the topology of $\mathcal{M}$ has a countable basis and it is locally compact, it can be shown that $\mathcal{M}$ is paracompact, that is, every open cover of $\mathcal{M}$ admits a locally finite refinement. Paracompactness is a sufficient condition for partitions of unit to exist, and therefore $\mathcal{M}$ is also metrizable\footnote{By metrizable, we mean that $\mathcal{M}$ admits a metric $g$.}. Schematically we can write

\begin{equation}
\mathcal{M}:~ \mathrm{Hausdorff~and ~second-countable ~space}\xrightarrow{\mathrm{locally~ homeomorphic~ to}~\mathbb{R}^{n}} \mathrm{Paracompact~and~metrizable}\, .
\end{equation} 

\noindent
In order to give some intuition or justification to the conditions included in the definition of a manifold, let us take an example from Physics, in particular from General Relativity. In the context of General Relativity, the space-time is usually described as an $n$-dimensional differentiable manifold $\mathcal{M}$. In that context, the Hausdorff condition is natural since it is experimentally observed. On the other hand, the gravitational interaction is described by a Lorentzian metric ${\bf g}$ on $\mathcal{M}$. Since $\mathcal{M}$ is paracompact and therefore metrizable, we know that we can always equip the space-time manifold with such a metric. The second-countable condition is a reasonable assumption for topological spaces locally homeomorphic to $\mathbb{R}^{n}$, since otherwise the space would not be adapted to be locally \emph{like} $\mathbb{R}^{n}$.

A function $f:\mathcal{M}\to\mathbb{R}$ is differentiable at $p\in\mathcal{U}_{\alpha}\subset\mathcal{M}$ if $f\circ\phi^{-1}_{\alpha}:\phi_{\alpha}\left(\mathcal{U}_{\alpha}\right)\subset\mathbb{R}^{n}\to\mathbb{R}$ is differentiable at $\phi_{\alpha}(p)\in\mathbb{R}^{n}$. $f$ is called differentiable if it is differentiable at every point $p\in\mathcal{M}$. We denote by $C^{\infty}(\mathcal{M})$ the set of differentiable functions from $\mathcal{M}$ into $\mathbb{R}$ and by $C^{\infty}(\mathcal{M},p)$ the set of functions from $\mathcal{M}$ into $\mathbb{R}$  differentiable at $p\in \mathcal{M}$.

There are three basic and equivalent ways to define the tangent space $T_{p}\mathcal{M}$ of  a differentiable manifold $\mathcal{M}$ at a point $p\in\mathcal{M}$, namely

\begin{enumerate}

\item Let $\mathfrak{T}_{p}$ be the set of all pairs $(\phi_{\alpha},u)$, where $p\in\mathcal{U}_{\alpha}$ and $u\in\mathbb{R}^{n}$. We define an equivalence relation $(\phi_{\alpha},u)\sim (\phi_{\beta},v)$ by the condition

\begin{equation}
d_{\phi_{\alpha}(p)}\left(\phi_{\beta}\circ\phi^{-1}_{\alpha}\right)(u)=v\, .
\end{equation}

\noindent
The equivalence class of $(\phi_{\alpha},u)$ will be denoted by $\left[\phi_{\alpha},u\right]$. The set $T_{p}\mathcal{M}\equiv\mathfrak{T}_{p}/\sim$ is then the tangent space at the point $p\in\mathcal{M}$. If $e_{i}\, , i=1,\dots,n$ is the canonical basis of $\mathbb{R}^{n}$ we define the partial derivatives respect to ${\bf x}^{i}$ by:

\begin{equation}
\frac{\partial}{\partial {\bf x}^{i}} = \left[\phi_{\alpha},e_{i}\right]\, ,\qquad i=1,\dots,n\, .
\end{equation}

\item A curve in $\mathcal{M}$ is a map $c:\left[0,1\right]\to\mathcal{M}$. A curve $c$ is differentiable at $t_{0}\in (0,1)\, , c(t_{0})\in\mathcal{U}_{\alpha}\subset\mathcal{M}$, if  $\phi_{\alpha}\circ c: [0,1]\to \mathbb{R} $ is differentiable at $t_{0}$. Let be $\mathfrak{T}_{p}$ the set of all the curves in $\mathcal{M}$ passing through $p\in\mathcal{U}_{\alpha}$. We define the following equivalence relation $\sim$: two curves $c_{1}(t)$ and $c_{2}(t)$ on $\mathcal{M}$ passing through $p$ are related by $\sim$ if

\begin{equation}
\partial_{t}\left(\phi_{\alpha}\circ c_{1} \right)(t_{0})=\partial_{t}\left(\phi_{\alpha}\circ c_{2} \right)(t_{0})\, ,
\end{equation}

\noindent
where $t_{0}\in\mathbb{R}$ is a fixed real number. We denote by $[c]$ the class of equivalence of $c$. Then the tangent space is $T_{p}\mathcal{M} \equiv \mathfrak{T}_{p}/\sim$.
 
\item A derivation at $p\in\mathcal{M}$ is a linear application $D:C^{\infty}(\mathcal{M})\to\mathbb{R}$ such that

\begin{equation}
D\left(fg\right)=f\left(p\right)D\left(g\right)+g D\left(f\right)\, .
\end{equation}

\noindent
for every $f,g\in C^{\infty}\left(\mathcal{M},p\right)$. We denote the space of derivations at $p\in\mathcal{M}$ by $\mathrm{Der}\left(\mathcal{M},p\right)$. Then we have $T_{p}\mathcal{M}\equiv\mathrm{Der}\left(\mathcal{M},p\right)$.

\end{enumerate}

\noindent

Let $\mathcal{M}$ be a  differentiable manifold of dimension $n$ with atlas $(\mathcal{U}_{\alpha},\psi_{\alpha})_{\alpha\in I}$ and let $\mathcal{N}$ be a differentiable manifold of dimension $m$ with atlas $(\mathcal{V}_{\beta},\psi_{\beta})_{\beta\in J}$. Let $\Phi$ be a mapping from $\mathcal{M}$ to $\mathcal{N}$. $\Phi$ it is said to be differentiable at a point $p\in\mathcal{U}_{\alpha}$ if the map $\psi_{\beta}\circ\Phi\circ\phi^{-1}_{\alpha}: \mathbb{R}^{n}\to\mathbb{R}^{m}$, where $\Phi (p) = q\in\mathcal{V}_{\beta}$, is differentiable at $\phi^{-1}_{\alpha}(p)$. $\Phi$ is said to be differentiable if it is differentiable at every point $p$ in $\mathcal{M}$. $\Phi$ is said to be a diffeomorphism if it is a differentiable biyective map with differentiable inverse. $\Phi$ is said to be a local diffeomorphism if for every $p\in\mathcal{M}$ there exists an open set $\mathcal{U}\in\mathcal{M}$ containing $p$ such that $\Phi$ restricted to $\mathcal{U}$ is a diffeomorphism.

The differential $d_{p}\Phi$ of $\Phi$ at a point $p\in\mathcal{M}$ is a linear map $d_{p}\Phi : T_{p}\mathcal{M}\to T_{q}\mathcal{N}$ which can be naturally defined, for each of the equivalent definitions of the tangent space, as follows

\begin{enumerate}

\item $d_{p}\Phi: \left[\phi_{\alpha},u\right] \mapsto \left[\psi_{\alpha},d_{\phi_{\alpha}(p)}\left(\psi_{\alpha}\circ\Phi\circ\phi^{-1}_{\alpha}\right)(u)\right]$

\item $d_{p}\Phi: \left[c\right] \mapsto \left[\Phi\circ c\right]$

\item $d_{p}\Phi: D \mapsto \Phi_{\ast} D\, ,$ where $\Phi_{\ast} D\left( h\right) = D \left(h\circ\Phi\right)$ for every $h\in C^{\infty}\left(\mathcal{N},q\right)$.

\end{enumerate}

We are ready to introduce the tangent bundle of $\mathcal{M}$, which is a special instance of vector bundle, which will be defined in section \ref{sec:fibrebundles}. Let us consider the set

\begin{equation}
T\mathcal{M} = \left\{(p,v): p\in\mathcal{M}\, , v\in T_{p}\mathcal{M}\right\} = \bigcup_{p\in\mathcal{M}}\left\{p\right\}\times T_{p}\mathcal{M}\, .
\end{equation}

\noindent
We define the projection $\pi : T\mathcal{M}\to\mathcal{M}$ as $\pi (p,v)=p$ for every $(p,v)\in T\mathcal{M}$. $T\mathcal{M}$ can be equipped with the topology whose open sets are given by $\pi^{-1}\left(\mathcal{U}_{\alpha}\right)$, where $\mathcal{U}_{\alpha}\subset\mathcal{M}$ is an open set of the atlas of $\mathcal{M}$. In every $\pi^{-1}\left(\mathcal{U}_{\alpha}\right)$ we define coordinates $\tilde{\phi}_{\alpha}$ as follows

\begin{equation}
\tilde{\phi}_{\alpha}(p,v) = \left(\phi_{\alpha}(p), d_{p}\phi_{\alpha}(v)\right)\in\mathbb{R}^{2n}\, ,
\end{equation}

\noindent
for every $(p,v)\in \mathcal{U}_{\alpha}\subset T\mathcal{M}$. Therefore, $T\mathcal{M}$ is a $2n$-dimensional manifold which is in particular a vector bundle of rank $n$ and fibre at a point $p\in\mathcal{M}$ given by the vector space $T_{p}\mathcal{M}$. 

A complete vector field $v\in\mathfrak{X}(\mathcal{M})$\footnote{A vector field is said to be complete if the parameter of each integral curve extends to $\left(-\infty,\infty\right)$.} generates a family of diffeomorphisms $\rho_{t}:\mathcal{M}\to\mathcal{M}\, , \, t\in\mathbb{R}$ as follows.

\definition{\label{def:flowvectorfield} Every complete vector field $v\in\mathfrak{X}(\mathcal{M})$ generates a family of diffeomorphisms  $\rho_{t}:\mathcal{M}\to\mathcal{M}\, , \, t\in\mathbb{R}$. For each $p\in\mathcal{M}$, $\rho_{t}(p)\, , \, t\in\mathbb{R}$ is, by definition, the unique integral curve of $v$ passing through $p$ at $t=0$.}

It can be can be checked that the map $\mathbb{R}\to\mathrm{Diff}\left(\mathcal{M}\right)$ given by $t\to\rho_{t}$ is a group homomorphism. The family $\left\{\rho_{t}\, ,\, t\in\mathbb{R}\right\}$ is called the one-parameter group of diffeomorphisms of $\mathcal{M}$. 

At every point $p\in\mathcal{M}$ we denote the dual space of $T_{p}\mathcal{M}$ as $T^{\ast}_{p}\mathcal{M}$. Elements of $T^{\ast}_{p}\mathcal{M}$ are called one-forms at the point $p$. Similarly, the dual bundle of $T\mathcal{M}$ is denoted by $T^{*}\mathcal{M}$. Sections\footnote{See definition (\ref{def:bundlesection}).} of $T\mathcal{M}$ ($T^{\ast}\mathcal{M}$) are vector (one-form) fields in $\mathcal{M}$. They correspond simply to a smooth choice of vector (one-form) in $T_{p}\mathcal{M}$ ($T^{\ast}_{p}\mathcal{M}$) at every point $p\in\mathcal{M}$. The set of all the vector fields (one-form fields) in $\mathcal{M}$ is denoted by $\mathfrak{X}(\mathcal{M})$ ($\Omega^{1}(\mathcal{M})$) or equivalently by $\Gamma (T\mathcal{M})$ ($\Gamma (T^{\ast}\mathcal{M})$). Analogously, an element $\mathfrak{T}_{p}$ of $\left(T_{p}\mathcal{M}\right)^{\otimes s}\otimes \left(T^{\ast}_{p}\mathcal{M}\right)^{\otimes r}$ is a $(r,s)$ tensor\footnote{For the definition of tensor product see \ref{def:tensorproduct}.} and a section $\mathfrak{T}$ of $\Gamma\left(\left(T\mathcal{M}\right)^{\otimes s} \otimes\left(T^{\ast}\mathcal{M}\right)^{\otimes r}\right)$ is a $(r,s)$ tensor field on $\mathcal{M}$.

Of outermost importance in differential geometry are the tensor algebra\footnote{For the abstract definition of algebra see \ref{def:algebra}.} $\left(\mathrm{T}\left(\mathcal{M}\right),\otimes\right)$ and the algebra of differential forms $\left(\Lambda\left(\mathcal{M}\right),\wedge\right)$. Let $\mathrm{T}_{(r,s)}\left(\mathcal{M}\right)$ denote the set of all tensor fields on $\mathcal{M}$ of type $(r,s)$, and let $\Omega^{k}\left(\mathcal{M}\right)$ denote the set of all $k$-form fields on $\mathcal{M}$. Then we have\footnote{For more details and general definitions, see section \ref{sec:hlahomalgebra}.}

\begin{equation}
\mathrm{T}\left(\mathcal{M}\right) = \sum_{r,s = 1}^{\infty} \mathrm{T}_{(r,s)}\left(\mathcal{M}\right)\, ,\qquad \Lambda\left(\mathcal{M}\right) = \sum_{k=1}^{\infty}\Omega^{k}\left(\mathcal{M}\right)\, .
\end{equation}

\noindent
Please notice that the infinite sum in the definition of $\Lambda\left(\mathcal{M}\right)$ is only formal; for finite-dimensional manifolds it will contain only a finite number of terms. 

There are several important operators that can be defined on $\mathrm{T}\left(\mathcal{M}\right)$ and $\Lambda\left(\mathcal{M}\right)$. Here we will consider the interior product $\iota_{v}$, the exterior derivative or de Rham differential $d$ and the Lie derivative $\mathcal{L}_{v}\, , \, v\in\Gamma\left(T\mathcal{M}\right)$. 

\paragraph{The interior product $\iota_{v}$}

The interior product $\iota_{v}\colon \Omega^{i}\left(\mathcal{M}\right) \to \Omega^{(i-1)}\left(\mathcal{M}\right)$ is a $-1$ degree derivation on the exterior algebra of differential forms $\Lambda\left(\mathcal{M}\right)$. It is defined to be the contraction of a differential form with a vector field $v\in\mathfrak{\mathcal{M}}$ as follows

\begin{eqnarray}
\left( \iota_{v}\omega \right)\left(v_1,\ldots,v_{(i-1)}\right)=\omega\left(v,v_1,\ldots,v_{(p-1)}\right)\, , \qquad \forall\,\, v_{1}, \ldots, v_{(p-1)} \in \mathfrak{X}\left(\mathcal{M}\right)\, .
\end{eqnarray}

\noindent
The interior product is the unique derivation of degree $−1$ on the exterior algebra such that on one-forms corresponds to the natural pairing of one-forms and vectors.

\paragraph{The exterior derivative $d$}

The exterior derivative $d$ is defined to be the unique $\mathbb{R}$-linear mapping $d\colon \Omega^{i}\left(\mathcal{M}\right) \to \Omega^{(i+1)}\left(\mathcal{M}\right)$ such that

\begin{itemize}

\item $df$ is the differential of $f$ for every function $f\in C^{\infty}\left(\mathcal{M}\right)$.

\item $ d\circ df$ = 0 for every function $f\in C^{\infty}\left(\mathcal{M}\right)$.

\item $d\left(\alpha \wedge \beta\right) = d\alpha\wedge\beta + (−1)^p \alpha\wedge d\beta)$, where $\alpha$ is  a $p$-form and $\beta$ is any form. 

\end{itemize}

\noindent
Since the second defining property holds in more generality, that is, $d\circ d\alpha = 0$ for any $p$-form $\alpha$, it is usually written as $d^2=d\circ d =0$. 

\paragraph{The Lie derivative $\mathcal{L}_{v}$}

The Lie derivative can be defined acting on tensor fields of any type $(r,s)$, that is, it has a well defined action on $\mathrm{T}\left(\mathcal{M}\right)$. Intuitively, the lie derivative $\mathcal{L}_{v}$ evaluates the change of a tensor field along the flow of the vector field $v$. It is defined point-wise as follows

\begin{equation}
\label{eq:liederivative}
\left(\mathcal{L}_{v} \mathfrak{T}\right)_p=\left.\frac{d}{dt}\right|_{t=0}\left((\varphi_{-t})_{_\ast}\mathfrak{T}_{\varphi_{t}(p)}\right)=\left.\frac{d}{dt}\right|_{t=0}\left((\varphi_{t})^{\ast}\mathfrak{T}\right)_p\, ,
\end{equation}

\noindent
where $\mathfrak{T}$ is a $(r,s)$ tensor field on $\mathcal{M}$ and $p\in\mathcal{M}$. It can be checked that with the definition (\ref{eq:liederivative}) $\mathcal{L}_{v}\mathfrak{T}$ is again a $(r,s)$ tensor field on $\mathcal{M}$. We now give an algebraic definition. The algebraic definition for the Lie derivative of a tensor field follows from the following four axioms

\begin{itemize}
    \item $\mathcal{L}_{v} f = v(f) $ for all $f\in C^{\infty}(\mathcal{M})$.

    \item The Lie derivative $\mathcal{L}_{v}$ obeys the Leibniz rule. That is, for any tensor fields $\mathfrak{S}$ and $\mathfrak{T}$, we have

\begin{equation}
\mathcal{L}_{v} \left( \mathfrak{S}\otimes \mathfrak{T}\right)=\left(\mathcal{L}_{v} \mathfrak{S}\right)\otimes \mathfrak{T} + \mathfrak{S}\otimes \left(\mathcal{L}_{v} \mathfrak{T}\right), .
\end{equation}

    \item The Lie derivative, when applied to forms, obeys the Leibniz rule with respect to contraction

\begin{equation}
       \mathcal{L}_{v} (\mathfrak{T}(Y_1, \ldots, Y_n)) = (\mathcal{L}_{v} \mathfrak{T})(Y_1,\ldots, Y_n) + \mathfrak{T}((\mathcal{L}_{v} Y_1), \ldots, Y_n) + \cdots + T(Y_1, \ldots, (\mathcal{L}_{v} Y_n)) 
\end{equation}

    \item The Lie derivative, when applied to forms, commutes with the de Rham differential $d$, that is

\begin{equation}
\left[\mathcal{L}_{v}, d\right] = 0\, ,
\end{equation}

\noindent
\end{itemize}

\noindent
The Lie derivative $\mathcal{L}_{v}$ can be compactly written as

\begin{equation}
\label{eq:cartansidentity}
\mathcal{L}_{v} = \iota_{v}\circ d + d\circ\iota_{v}\, ,
\end{equation}

\noindent
which is known as the \emph{Cartan formula}.

\subsection{Cartan calculus}
\label{sec:cartancalculus}


 Let us denote by $\mathfrak{X}\left(\mathcal{M}\right)$ the $C^{\infty}\left(\mathcal{M}\right)$-module of vector fields on $\mathcal{M}$. Then 

\begin{equation}
\mathfrak{X}^{\bullet}\left(\mathcal{M}\right)=\bigoplus^{\dim \mathcal{M}}_{k=0}\Lambda^{k} \mathfrak{X}\left(\mathcal{M}\right)\, ,
\end{equation}

\noindent
is a graded commutative algebra, the so-called graded commutative algebra of multivector fields, where the corresponding algebra product is given by the wedge product, denoted by $\wedge$. $\mathfrak{X}^{\bullet}\left(\mathcal{M}\right)$ can be equipped with  a degree minus one Lie bracket $[\cdot,\cdot] : \mathfrak{X}^{\bullet}\left(\mathcal{M}\right) \times \mathfrak{X}^{\bullet}\left(\mathcal{M}\right) \to \mathfrak{X}^{\bullet}\left(\mathcal{M}\right)$ that satisfies the (graded) Leibniz rule with respect to the algebra product, that is, the wedge product. $[\cdot,\cdot]$ is given by

\begin{eqnarray}
\label{eq:Schouten}
\left [ u_{1} \wedge \cdots \wedge u_{m}, v_{1} \wedge \cdots \wedge
  v_{n} \right] 
=  \sum_{i=1}^{m} \sum_{j=1}^{n} (-1)^{i+j} \left[ u_{i},v_{j}\right]
\wedge u_{1} \wedge \cdots \wedge \hat{u}_{i} \wedge  \cdots \wedge
u_{m}\wedge v_{1} \wedge \cdots \wedge \hat{v}_{j} \wedge \cdots \wedge v_{n}\nonumber\, ,
\end{eqnarray} 

\noindent
where $u_{1}\wedge\cdots\wedge u_{m}\, , \, v_{1}\wedge\cdots\wedge v_{n}\, \in \mathfrak{X}^{\bullet}\left(\mathcal{M}\right)$ and $\left[ u_{i},v_{j}\right]$ is the standard Lie bracket of vector fields. This is the so-called \emph{Schouten bracket}, and it makes $\left( \mathfrak{X}^{\bullet}\left(\mathcal{M}\right), \wedge, [\cdot,\cdot]\right)$ into a particular instance of Gerstenhaber algebra\footnote{See definition \ref{def:gealgebra}.}.

We can define also the interior product of any decomposable multivector field, say $v_{1} \wedge \cdots \wedge v_{n}$, with any $\beta \in \Omega^{\bullet}(\mathcal{M})$ is given by

\begin{equation} 
\label{eq:interior}
\iota(v_{1} \wedge \cdots \wedge v_{n}) \beta = \iota_{v_{n}} \cdots
\iota_{v_{1}} \beta,
\end{equation}

\noindent
where $\iota_{v_{i}} \beta$ is the stands for the usual interior product of vector fields and differential forms. The formula for the interior product of any multivector can be obtained by extending using $C^{\infty}\left(\mathcal{M}\right)$ linearity. 

The Lie derivative $\mathcal{L}_{v}$ of any differential form $\beta$ along any given multivector field $v \in\mathfrak{X}^{\bullet}\left(\mathcal{M}\right)$ can be written in terms of the graded commutator of $d$ and $\iota_{v}$ as follows

\begin{equation} 
\label{eq:Lie}
\mathcal{L}_{v} \beta =  d \iota_{v} \beta - (-1)^{|v|} \iota_{v} d\beta\, ,
\end{equation}

\noindent
where $\iota_{v}$ must be understood as a degree $-|v|$ operator. We will need one more identity. Let $u,v \in
\mathfrak{X}^{\bullet}\left(\mathcal{M}\right)$. Then it can be proven that

\begin{equation} 
\label{eq:commutator}
\iota_{[u,v]} \beta = (-1)^{(|u|-1)|v|} \mathcal{L}_{u} \iota_{v}  \beta - \iota_{v}\mathcal{L}_{u} \beta\, .
\end{equation}

\noindent
The graded commutative algebra of multivector fields $\mathfrak{X}^{\bullet}\left(\mathcal{M}\right)$ together with the Schouten bracket is therefore a particular instance of a Gerstenhaber algebra that can be constructed in any differential manifold $\mathcal{M}$.


\section{Fibre bundles}
\label{sec:fibrebundles}


\definition{\label{def:bundle} Let $\mathcal{F}$, $\mathcal{M}$ and $\mathcal{E}$ be differentiable manifolds and let $\pi: \mathcal{E}\to\mathcal{M}$ a differentiable map. The quadruple $\left(\mathcal{E},\pi,\mathcal{M},\mathcal{F}\right)$ is a locally trivial differentiable fibre bundle if for every $p\in\mathcal{M}$ there is an open set $\mathcal{U}$ containing $p$ and a diffeomorphism $\phi: \pi^{-1}\left(\mathcal{U}\right)\to\mathcal{U}\times\mathcal{F}$ such that the following diagram commutes

\begin{center}
\begin{tikzpicture}
\label{diag:bundle}
  \matrix (m) [matrix of math nodes,row sep=8em,column sep=9em,minimum width=2em]
  {
\pi^{-1}\left( \mathcal{U}\right) & \mathcal{U}\times F \\
& \mathcal{U} \\};
  \path[-stealth]
    (m-1-1) edge node [above] {$\phi$} (m-1-2)
    (m-1-1) edge node [below] {$\pi$} (m-2-2)
    (m-1-2) edge node [right] {$\mathrm{pr}_{1}$} (m-2-2);
\end{tikzpicture}
\end{center}

}

\noindent
where $\mathrm{pr}_{1}$ is the proyection on the first factor. $\mathcal{E}$ is the total space, $\mathcal{M}$ is the base space, $\mathcal{F}$ is the typical fibre and $\pi$ is the bundle projection. For each $p\in\mathcal{M}$, the set $\mathcal{E}_{p} \equiv \pi^{-1}\left(p\right)$ is the fibre over $p$, which is diffeomorphic to $\mathcal{F}$. The maps $\phi:\pi^{-1}\left(\mathcal{U}\right)\to\mathcal{U}\times\mathcal{F}$ are called the local trivializations of the bundle. Such a local trivialization must be of the form $\phi = \left(\pi_{\pi^{-1}\left(\mathcal{U}\right)},\Phi\right)$ where 

\begin{equation}
\Phi:\pi^{-1}\left(\mathcal{U}\right)\to\mathcal{F}\, ,
\end{equation}

\noindent
is a differentiable map such that 

\begin{equation}
\Phi_{\left|\,\mathcal{E}_{p}\right.}:\mathcal{E}_{p}\to\mathcal{F}\, ,
\end{equation}

\noindent
is a diffeomorphism. The pair $\left(\mathcal{U}, \phi\right)$, where $\phi$ is a local trivialization over the open set $\mathcal{U}\subset\mathcal{M}$ is called a bundle chart. A family $\left(\mathcal{U}_{\alpha}, \phi_{\alpha}\right)_{\alpha\in I}$ such that $\left(\mathcal{U}_{\alpha}\right)_{\alpha\in I}$ is a cover of $\mathcal{M}$ is a bundle atlas. Given two different bundle charts $\left(\mathcal{U}_{\alpha},\phi_{\alpha}\right)$ and $\left(\mathcal{U}_{\beta},\phi_{\beta}\right)$ such that $\mathcal{U}_{\alpha}\cap\mathcal{U}_{\beta}\neq\emptyset$ we have the overlap map

\begin{equation}
\phi_{\alpha}\circ\phi^{-1}_{\beta}: \mathcal{U}_{\alpha}\cap\mathcal{U}_{\beta}\times\mathcal{F}\to \mathcal{U}_{\alpha}\cap\mathcal{U}_{\beta}\times \mathcal{F}\, ,
\end{equation}

\noindent
which can be written as follows

\begin{equation}
\phi_{\alpha}\circ\phi^{-1}_{\beta}(p,q) = \left(p,\Phi_{\alpha\beta}(p)(q)\right)\, , \qquad p\in\mathcal{M}\, , \qquad q\in\mathcal{F}\, ,
\end{equation}

\noindent
where $\Phi_{\alpha\beta}:\mathcal{U}_{\alpha}\cap\mathcal{U}_{\beta}\to \mathrm{Diff}\left(\mathcal{F}\right)$ is given by

\begin{equation}
p\mapsto \Phi_{\alpha\beta}(p) = \Phi_{\alpha\left|\,\mathcal{E}_{p}\right.} \circ \Phi^{-1}_{\beta\left|\,\mathcal{E}_{p}\right.}\, .
\end{equation}

\noindent
The functions $\Phi_{\alpha\beta}$ are called the transition maps, and satisfy

\begin{itemize}

\item $\Phi_{\alpha\alpha}(p) = \mathrm{Id}_{\mathrm{Diff}(\mathcal{F})}\, ,\qquad p\in\mathcal{U}_{\alpha}\, ,$

\item $\Phi_{\alpha\beta}(p) = \Phi_{\beta\alpha}(p)^{-1}\, ,\qquad p\in\mathcal{U}_{\alpha}\cap\mathcal{U}_{\beta}\, ,$

\item $\Phi_{\alpha\beta}(p)\circ \Phi_{\beta\gamma}(p) \circ \Phi_{\gamma\alpha} (p) = \mathrm{Id}_{\mathrm{Diff}(\mathcal{F})}\, ,\qquad p\in\mathcal{U}_{\alpha}\cap\mathcal{U}_{\beta}\cap\mathcal{U}_{\gamma}\, ,$

\end{itemize}

\noindent
for all $\alpha\, , \beta\, ,\gamma \in I$. The characterization just given of the transition maps as a map to the diffeomorphisms of $\mathcal{F}$ can be usually restricted to a map onto a lie group $G$ acting on $\mathcal{F}$ by a particular action $\Psi: G\times\mathcal{F}\to\mathcal{F}$. The reader is invited to consult \cite{JMLee,DHbundle} for more details.

\definition{\label{def:bundlemorphism} Let $\xi_{1}=\left(\mathcal{E}_{1},\pi_{1},\mathcal{M}_{1},\mathcal{F}_{1}\right)$ and $\xi_{2}=\left(\mathcal{E}_2,\pi_2,\mathcal{M}_2,\mathcal{F}_2\right)$ differentiable fibre bundles. A morphism from $\xi_{1}$ to $\xi_{2}$ is a couple of maps $F:\mathcal{E}_{1}\to\mathcal{E}_2$ and $f:\mathcal{M}_{1}\to\mathcal{M}_2$ such that the following diagram commutes

\begin{center}
\begin{tikzpicture}
\label{diag:bundlemorphism}
  \matrix (m) [matrix of math nodes,row sep=8em,column sep=9em,minimum width=2em]
  {
\mathcal{E}_{1} & \mathcal{E}_{2} \\
\mathcal{M}_{1} & \mathcal{M}_{2} \\};
  \path[-stealth]
    (m-1-1) edge node [above] {$F$} (m-1-2)
    (m-1-1) edge node [left] {$\pi_{1}$} (m-2-1)
    (m-2-1) edge node [above] {$f$} (m-2-2)
    (m-1-2) edge node [right] {$\pi_{2}$} (m-2-2);
\end{tikzpicture}
\end{center}

}

\noindent
If $F$ and $f$ are diffeomorphisms, then $\left(F,f\right):\xi_{1}\to\xi_{2}$ is a bundle isomorphism.

\definition{\label{def:bundlesection} A differentiable global section of a fibre bundle $\xi=\left(\mathcal{E},\pi,\mathcal{M},\mathcal{F}\right)$ is a differentiable map $\sigma:\mathcal{M}\to \mathcal{E}$ such that $\pi\circ\sigma=\mathrm{Id}_{\mathcal{M}}$. A differentiable local section over an open set $\mathcal{U}$ is a differentiable map $\sigma:\mathcal{M}\to \mathcal{U}$ such that $\pi\circ\sigma=\mathrm{Id}_{\mathcal{U}}$.

}

\noindent
The set of differentiable sections of $\xi$ is denoted by $\Gamma\left(\xi\right)$ or $\Gamma\left(\mathcal{E}\right)$. Notice that a fibre bundle may not have any global section.

\definition{\label{def:vectorbundle} Let $V$ be a finite dimensional vector space over the complex or real numbers. A smooth vector bundle with typical fibre $V$ is a fibre bundle $\left(\mathcal{E},\pi,\mathcal{M},V\right)$ such that

\begin{itemize}

\item for each $p\in\mathcal{M}$ we have that $\mathcal{E}_{p}=\pi^{-1}(p)$ is a vector space isomorphic to $V$.

\item for every $p\in\mathcal{M}$ there exist a bundle chart $\left(\mathcal{U}_{\alpha},\phi_{\alpha}\right)$ containing $p$ such that 

\begin{equation}
\Phi_{\left|\, \mathcal{E}_{p}\right.}:\mathcal{E}_{p}\to V\, ,
\end{equation}

\noindent
is a vector space isomorphism, where $\phi = \left(\pi_{\pi^{-1}\left(\mathcal{U}\right)},\Phi\right)$.

\end{itemize}

}

\noindent
The typical example of vector bundle is the tangent bundle $T\mathcal{M}$ over a manifold $\mathcal{M}$. The notion of bundle morphism, given in definition (\ref{def:bundlemorphism}) specializes to vector bundles by requiring $F_{\left|\,\pi^{-1}_{1}(p)\right.}: \pi^{-1}_{1}(p) \to \pi^{-1}_{2}\left(f(p)\right)$ to be linear.

Given two vector bundles $\pi_{1}: \mathcal{E}_{1}\to \mathcal{M}$ and $\pi_{2}: \mathcal{E}_{2}\to \mathcal{M}$, we can define the \emph{Whitney sum bundle} $\pi_{1}\oplus\pi_{2}: \mathcal{E}_{1}\oplus\mathcal{E}_{2}\to\mathcal{M}$ such that the fibre at a point $p\in\mathcal{M}$ is given by $\left(\mathcal{E}_{1}\oplus\mathcal{E}_{2}\right)_{p} = \mathcal{E}_{1\, p}\oplus\mathcal{E}_{2\, p}$.

The pull-back of a vector bundle $\pi:\mathcal{E}\to\mathcal{M}$ by a smooth map $f:\mathcal{N}\to\mathcal{M}$, where $\mathcal{N}$ is a differentiable manifold, is the vector bundle $\left(f^{\ast}\mathcal{E}\right)$ over $\mathcal{N}$ defined as follows

\begin{equation}
f^{\ast}\mathcal{E} = \left\{ (q,e)\in \mathcal{N}\times\mathcal{E}\, \left|\right.\, f(q) = \pi(e) \right\}\subset \mathcal{N}\times\mathcal{E}\, ,
\end{equation}

\noindent
and equipped with the subspace topology and the projection map $\mathrm{pr}_{1} : f^{\ast}\mathcal{E} \to \mathcal{N}$ given by the projection onto the first factor

\begin{equation}
\mathrm{pr}_{1}(q,e) = q\, .
\end{equation}

\noindent
Notice that the following diagram commutes

\begin{center}
\begin{tikzpicture}
  \matrix (m) [matrix of math nodes,row sep=8em,column sep=9em,minimum width=2em]
  {
f^{\ast}\mathcal{E} & \mathcal{E} \\
\mathcal{N} & \mathcal{M} \\};
  \path[-stealth]
    (m-1-1) edge node [above] {$\mathrm{pr}_{2}$} (m-1-2)
    (m-1-1) edge node [left] {$\mathrm{pr}_{1}$} (m-2-1)
    (m-2-1) edge node [above] {$f$} (m-2-2)
    (m-1-2) edge node [right] {$\pi$} (m-2-2);
\end{tikzpicture}
\end{center}

\noindent
where $\mathrm{pr}_{2}$ is the projection on the second factor. If $\left(\mathcal{U}, \phi\right)$ is a local trivialization of $\mathcal{E}$, then $\left(f^{-1}\left(\mathcal{U}\right), \psi\right)$ is a local trivialization of $f^{\ast}\mathcal{E}$ where

\begin{equation}
\psi(q,e) = \left(q, \mathrm{pr}_2\left(\phi(e)\right)\right)\, , \qquad\forall\,\, (q,e)\, \in f^{\ast}\mathcal{E}\, . 
\end{equation}

\noindent
Therefore, the fibre at a point $q\in\mathcal{N}$ is given by

\begin{equation}
\left(f^{\ast}\mathcal{E}\right)_{q} = \mathcal{E}_{f(q)}\, .
\end{equation}

\noindent 
A section $\sigma\in\Gamma\left(\mathcal{E}\right)$ induces a section $f^{\ast}\sigma\in\Gamma\left(f^{\ast}\mathcal{E}\right)$ defined by $f^{\ast}\sigma=\sigma\circ f$.

\begin{ep}
\label{ep:tangentpullback}
As an example of pull-back of a vector bundle we are going to consider the pull-back of the tangent bundle $T\mathcal{M}$ of a differentiable manifold $\mathcal{M}$. Let $\mathcal{N}$ be a differentiable manifold and let

\begin{equation}
f : \mathcal{N}\to\mathcal{M}\, 
\end{equation}

\noindent
be a map. The pull-back bundle is defined as follows

\begin{equation}
f^{\ast}T\mathcal{M} = \left\{ (q,e)\in \mathcal{N}\times T\mathcal{M}\, \left|\right.\, f(q) = \pi(e) \right\}\subset \mathcal{N}\times T\mathcal{M}\, .
\end{equation}

\noindent
Notice that the following diagram commutes

\begin{center}
\begin{tikzpicture}
\label{diag:bundlemorphismpullback}
  \matrix (m) [matrix of math nodes,row sep=8em,column sep=9em,minimum width=2em]
  {
f^{\ast} T\mathcal{M} & T\mathcal{M} \\
\mathcal{N} & \mathcal{M} \\};
  \path[-stealth]
    (m-1-1) edge node [above] {$\mathrm{pr}_{2}$} (m-1-2)
    (m-1-1) edge node [left] {$\mathrm{pr}_{1}$} (m-2-1)
    (m-2-1) edge node [above] {$f$} (m-2-2)
    (m-1-2) edge node [right] {$\pi$} (m-2-2);
\end{tikzpicture}
\end{center}

\noindent
Notice that in general $f^{\ast} T\mathcal{M}$ is not equal to $T\mathcal{N}$. Only when $f$ is a diffeomorphism we have $f^{\ast} T\mathcal{M} \simeq T\mathcal{N}$.

\end{ep}


\section{Lie groups}
\label{sec:liegroups}


\definition{\label{def:Liegroup} A smooth manifold $G$ is called a Lie group if it is also an abstract group such that the multiplication map $\cdot$ and the inverse map are $C^{\infty}$ maps.}

\definition{\label{def:lrtraslations} For a Lie group $G$ and an element $g\in G$, the maps $L_{g}: G\to G$ and $R_{g}: G\to G$ are defined by

\begin{equation}
L_{g}(h)=g\cdot h\, ,\qquad R_{g}(h) = h\cdot g\, , \qquad \forall h\in G\, .
\end{equation}

}

\definition{\label{def:leftinvectors} A vector field $X\in\mathfrak{X}\left(G\right)$ is called left-invariant if and only if $L_{g\ast}X=X$ for all $g\in G$. The set of left-invariant vectors is denoted by $\mathfrak{X}^{\mathrm{L}}\left(G\right)$. Similar considerations apply to right-invariant vectors.}

\noindent
$\mathfrak{X}^{\mathrm{L}}\left(G\right)$ is closed under the Lie bracket operation $\left[\cdot, \cdot\right] : G\times G\to G$. The set of left invariant vectors $\mathfrak{X}^{\mathrm{L}}\left(G\right)$ equipped with the binary operation $\left[\cdot, \cdot\right]$ is the lie algebra $\mathfrak{g}$ of $G$. Let us define now the adjoint representation of a Lie group. Given an element $g\in G$, we define the map $C_{g}$ as follows

\begin{eqnarray}
C_{g}:G &\to & G\nonumber\\
h &\mapsto & g h g^{-1}\, .
\end{eqnarray}

\noindent
$C_{g}:G\to G$ is a Lie group automorphism called the conjugation map. The corresponding tangent map is given by

\begin{eqnarray}
\left(dC_{g}\right)|_{h} : T_{h}G &\to & T_{ghg^{-1}}G \, .
\end{eqnarray}

\noindent
Therefore, for each $g\in G$, $\left(dC_{g}\right)|_{e} : T_{e}G \to T_{e}G$ is an automorphism of the tangent space of $G$ at the identity. In other words, $\left(dC_{g}\right)|_{e}$ is an automorphism of the Lie algebra $\mathfrak{g}$ of $G$, which respects the Lie bracket on $\mathfrak{g}$ and thus is a Lie-algebra endomorphism. We will call it the adjoint map $\mathrm{Ad}_{g}\equiv\left(dC_{g}\right)|_{e} : \mathfrak{g}\to\mathfrak{g}$. We define also the map $C: g\to C_{g}$, which is a group homomorphism from $G$ to $\mathrm{Aut}\left(G\right)$.

The map $\mathrm{Ad}: g\to \mathrm{Ad}_{g}$ is a Lie group homomorphism $G\to\mathrm{End}(\mathfrak{g})$ called the adjoint representation of $G$. Here $\mathrm{End}(\mathfrak{g})$ denotes the group of linear Lie-algebra endomorphisms of $\mathfrak{g}$. Therefore, Ad assigns to every element $g\in G$ an element of $\mathrm{Gl}\left(\mathfrak{g}\right)$ and therefore is a representation of $G$ on the vector space $\mathfrak{g}$. The adjoint representation ad of $\mathfrak{g}$ can be obtained from the adjoint representation of $G$ as follows

\begin{equation}
\mathrm{ad} =  d\left(\mathrm{Ad}\right)\, .
\end{equation}

\noindent
One can show that $\mathrm{ad}\left(v\right) w = \left[ v, w \right]\, ,\,\, \forall\,\, v, w \in\mathfrak{g}$. We proceed now to define the left action of a Lie group $G$ on a differentiable manifold $\mathcal{M}$. The right action is defined similarly.

\definition{\label{def:groupaction} A left action of a Lie group $G$ on $\mathcal{M}$ is a group homomorphism 

\begin{eqnarray}
\psi:\,\, G &\to &\mathrm{Diff}\left(\mathcal{M}\right)\nonumber\\
       g &\mapsto &\psi_{g} \, .
\end{eqnarray}

}

\definition{\label{def:groupactionevaluation} The evaluation map associated with an action $\psi:\, G \to \mathrm{Diff}\left(\mathcal{M}\right)$ is

\begin{eqnarray}
l_{\psi}:\,\, \mathcal{M}\times G &\to &\mathcal{M}\nonumber\\
       (p,g) &\mapsto &\psi_{g}(p) \, .
\end{eqnarray}

}

\noindent
The map $l: G\times G\to G$ given by  $l(g,h) = L_{g}(h)$ is an example of left action of $G$ onto itself. Given a left action $l: G\times\mathcal{M}\to\mathcal{M}$ and a fixed point $p\in\mathcal{M}$, the isotropy group of $p$ is defined to be

\begin{equation}
G_{p} = \left\{ g\in G : gp = p \right\}\, ,
\end{equation}

\noindent
which is a closed Lie subgroup of $G$.

\definition{\label{def:properaction} Let $l: G\times\mathcal{M}\to\mathcal{M}$ be a left group action. If the map $P: G\times\mathcal{M}\to\mathcal{M}\times\mathcal{M}$ given by $(g,p)\to \left(l(g,p),p\right)$ is proper, the action is said to be proper.}

\noindent
\begin{ep}
\label{ep:Raction} If $v$ is a complete vector field on $\mathcal{M}$, then

\begin{eqnarray}
\rho:\,\,\mathbb{R}&\to &\mathrm{Diff}\left(\mathcal{M}\right)\nonumber\\
                 t &\mapsto & \rho_{t}\, ,
\end{eqnarray}

\noindent
is a smooth action of $\mathbb{R}$ on $\mathcal{M}$.
\end{ep}

\noindent
Let $\psi :\, G\to\mathrm{Diff}\left(\mathcal{M}\right)$ be an action.

\definition{\label{def:orbitspace} The orbit of $G$ through $p\in\mathcal{M}$ is $O_{p} = \left\{\psi_{g}(p)\,\, |\,\, g\in G\right\}$.

} 

\definition{\label{def:actions} An action $\psi :\, G\to\mathrm{Diff}\left(\mathcal{M}\right)$ is said to be

\begin{itemize}

\item {\bf transitive} if $O_{p} = \mathcal{M}\, ,\,\,\, p\in\mathcal{M}$.

\item {\bf free} if $G_{p}$ is trivial $\forall\,\, p\in\mathcal{M}$.

\item {\bf locally free} if $G_{p}$ is discrete $\forall\,\, p\in\mathcal{M}$.

\end{itemize}

}

\noindent
Let $\sim$ be the orbit equivalence relation on $\mathcal{M}$ defined by

\begin{equation}
p \sim q \,\Leftrightarrow\, p,q\in O_{p}\, .
\end{equation}

\noindent
The space of orbits $\mathcal{M}/\sim = \mathcal{M}/G$ is called the orbit space. Let

\begin{eqnarray}
\pi:\, \mathcal{M} &\to &\mathcal{M}/G\nonumber\\
                p &\mapsto & G_{p}\, .
\end{eqnarray}

\noindent
be the point orbit projection. Then $\mathcal{M}/G$ can be equipped with the weakest topology for which $\pi$ is continuous, namely, $\mathcal{U}\subseteq\mathcal{M}/G$ is open if and only if $\pi^{-1}\left(\mathcal{U}\right)$ is open in $\mathcal{M}$.


\section{Symplectic Geometry}
\label{sec:symplectic}


\definition{\label{def:symplecticvector} Let $V$ be a vector space. The pair $(V, \omega)$ is a symplectic vector space if $\omega\in \Lambda^{2} V^{\ast}$ is non-degenerate, that is, if the kernel

\begin{equation}
\mathrm{ker}\, \omega \equiv \left\{ v\in V\,\, | \,\,\omega(v,w) = 0\, ,\,\,\forall w\in V\right\}\, ,
\end{equation}

\noindent
is trivial.
}

\noindent
Two symplectic vector spaces $\left(V_{1},\, \omega_{1}\right)$ and $\left(V_{1},\, \omega_{1}\right)$ are called \emph{symplectomorphic} if there is an isomorphism $F:V_{1}\to V_{2}$ that relates the symplectic structures, that is, $F^{\ast}\omega_{2} = \omega_{1}$. Given a symplectic vector space $(V,\,\omega)$, the group of automorphisms that preserve $\omega$ is denoted by $\mathrm{Sp}\left( V\right)$, which is a Lie subgroup of $\mathrm{Gl}\left( V\right)$, the group of automorphisms of $V$.

\begin{ep}
\label{ep:symplecticvector1} The simplest and arguably most important example of symplectic vector space consists of $\mathbb{R}^{2n}$, for some $n\in\mathbb{N}$, with basis $\left\{e_{1},\dots , e_{n}, f_{1},\dots , f_{n}\right\}$ equipped with the bilinear form $\omega$ given by

\begin{equation}
\omega\left( e_{i}, e_{j}\right) = 0\, , \quad  \omega\left( f_{i}, f_{j}\right) = 0\, , \quad \omega\left( e_{i}, f_{j}\right) = -\omega\left( f_{j}, e_{i}\right) = \delta_{ij}\, .
\end{equation}

\noindent
$\omega$ is a symplectic structure on $\mathbb{R}^{2n}$. It can be easily seen that every symplectic vector space of dimension $2n$ is symplectomorphic to $\left(\mathbb{R}^{2n}, \omega\right)$.

\end{ep}

\noindent
Another standard examples of symplectic vector space are the following

\begin{ep}
\label{ep:symplecticvector3} Let $E$ be a complex vector space of dimension $n$, equipped with a complex, positive definite inner product $h: E\times E\to\mathbb{C}$. Then $E$, taken as a real vector space, equipped with the bilinear form $\omega = \Im{\rm m}\left(h\right)$, is a symplectic vector space. The condition $h\left(v_{1},v_{2}\right) =\overline{h\left(v_{2},v_{1}\right)}$ translates into the antisymmetry of $\omega =\Im{\rm m}\left(h\right)$ as a real form on $V$.
\end{ep}

\definition{\label{def:smanifold}  A symplectic manifold $(\mathcal{M},\omega)$ is a real manifold equipped with a smooth, point-wise non-degenerate , global section $\omega$ of $\Lambda^2T^{*}\mathcal{M} $. Therefore, at every point $p\in\mathcal{M}$, $T_{p}\mathcal{M}$ is a symplectic vector space equipped with the symplectic form $\omega |_{p}$. }

\noindent
Given two symplectic manifolds $\left(M_{1},\,\omega_{1}\right)$ and $\left(M_{2},\,\omega_{2}\right)$, a symplectomorphism is a diffeomorphism $F: \mathcal{M}_{1}\to\mathcal{M}_{2}$ such that $F^{\ast}\omega_{2} = \omega_{1}$. The group of symplectomorphisms of a symplectic manifold $\left(M,\omega\right)$ onto itself is denoted by $\mathrm{Symp}\left(\mathcal{M},\, \omega\right)$. The non-degeneracy of $\omega$ has very important consequences, for instance

\begin{itemize}

\item Every symplectic manifold is orientable, with volume form given by the Liouville form 

\begin{equation}
\Lambda_{L} = \frac{\omega^{n}}{n!}\neq 0\, ,
\end{equation}

\noindent
where $2n$ is the dimension of $\mathcal{M}$.

\item There exists an isomorphism between the vector fields and one-forms on $\mathcal{M}$, given by

\begin{eqnarray}
\label{eq:ismvectorforms}
\tilde{\omega} : \,\mathfrak{X}_{\mathrm{Ham}}\left(\mathcal{M}\right) &\to & \Omega\left(\mathcal{M}\right)\nonumber\\
v &\mapsto & \iota_{v}\omega\, .
\end{eqnarray}

\end{itemize}

\noindent
We denote by $\mathfrak{X}_{\mathrm{Sym}}\left(\mathcal{M}\right)$ the set of all vector fields in $\mathcal{M}$ that preserves $\omega$, that is

\begin{equation}
\label{eq:symvectorfield}
\mathcal{L}_{v} \omega = d\iota_{v}\omega + \iota_{v}d\omega = d\iota_{v}\omega = 0\, ,
\end{equation}

\noindent
where we have used (\ref{eq:cartansidentity}). An element of $\mathfrak{X}_{\mathrm{Sym}}\left(\mathcal{M}\right)$ is called a symplectic vector field, and generates symplectomorphisms through the corresponding flow \ref{def:flowvectorfield}.  As already mentioned, due to the non-degeneracy of $\omega$, for every one-form $\xi\in\Omega\left(\mathcal{M}\right)$ there exists a unique vector field $v\in\mathfrak{X}\left(\mathcal{M}\right)$ such that

\begin{equation}
\label{eq:hamvectorxi}
\iota_{v}\omega = \xi\, .
\end{equation}

\noindent
In particular, for any function $f\in C^{\infty}\left(\mathcal{M}\right)$ there exists a unique vector field $v_{f}\in\mathfrak{X}\left(\mathcal{M}\right)$ such that

\begin{equation}
\label{eq:hamvector}
\iota_{v_{f}}\omega = -df\, .
\end{equation}

\noindent
$\iota_{v_{f}}$ is the so-called Hamiltonian vector field associated to $f$. The space of vector fields satisfying equation (\ref{eq:hamvector}) for some function $f\in C^{\infty}\left(\mathcal{M}\right)$ is denoted by $\mathfrak{X}_{\mathrm{Ham}}\left(\mathcal{M}\right)$, the space of hamiltonian vector fields.

\prop{\label{prop:hamissymplectic} Every Hamiltonian vector field is a symplectic vector field. That is

\begin{equation}
\mathfrak{X}_{\mathrm{Ham}}\left(\mathcal{M}\right)\subseteq \mathfrak{X}_{\mathrm{Sym}}\left(\mathcal{M}\right)\, .
\end{equation}

}

\proof{Given a Hamiltonian vector field $v_{f}$ respect to some function $f\in C^{\infty}\left(\mathcal{M}\right)$, we have, using Cartan's identity (\ref{eq:cartansidentity}), $\mathcal{L}_{v_{f}}\omega = d^2 f = 0$. \qed}

\noindent
Therefore, from Eq. (\ref{eq:symvectorfield}) and (\ref{eq:hamvector}) respectively, we see that $\iota_{v}\omega$ is closed for a symplectic vector field and exact for a hamiltonian vector field. Restricting $\tilde{\omega}$ in Eq. (\ref{eq:ismvectorforms}) to the set of hamiltonian vector fields $\mathfrak{X}_{\mathrm{Ham}}\left(\mathcal{M}\right)$ we obtain a new isomorphism 

\begin{equation}
\omega |_{\mathfrak{X}_{\mathrm{Ham}}} :\, \mathfrak{X}_{\mathrm{Ham}}\left(\mathcal{M}\right)\to B^{1}\left(\mathcal{M}\right)\, ,
\end{equation}

\noindent
where $B^{1}\left(\mathcal{M}\right) = \Omega^{1}\left(\mathcal{M}\right)\cap \mathrm{Im}\, d$ is the space of exact one-forms. Similarly, restricting $\tilde{\omega}$ to the set of symplectic vector fields $\mathfrak{X}_{\mathrm{Sym}}\left(\mathcal{M}\right)$ we obtain another isomorphism

\begin{equation}
\omega |_{\mathfrak{X}_{\mathrm{Sym}}} :\, \mathfrak{X}_{\mathrm{Ham}}\left(\mathcal{M}\right)\to Z^{1}\left(\mathcal{M}\right)\, ,
\end{equation}

\noindent
where $Z^{1}\left(\mathcal{M}\right) = \Omega^{1}\left(\mathcal{M}\right)\cap \mathrm{Ker}\, d$ is now the space of closed one-forms. Therefore, the quotient of symplectic and hamiltonian vector fields is just the first de Rham cohomology group

\begin{equation}
H^{1}\left(\mathcal{M}\right) = \frac{\mathfrak{X}_{\mathrm{Sym}}\left(\mathcal{M}\right)}{\mathfrak{X}_{\mathrm{Ham}}\left(\mathcal{M}\right)}\, .
\end{equation}

\noindent
Therefore, the following exact sequence of vector spaces holds

\begin{equation}
\label{eq:sechamsym}
0\to \mathfrak{X}_{\mathrm{Ham}}\left(\mathcal{M}\right) \to \mathfrak{X}_{\mathrm{Sym}}\left(\mathcal{M}\right)\to H^{1}\left(\mathcal{M}\right)\to 0\, .
\end{equation}

\noindent
Hence, if $H^{1}\left(\mathcal{M}\right) = 0$, every symplectic vector field is Hamiltonian. 

\prop{\label{prop:commham} Given two symplectic vector fields $v_{f_{1}}, v_{f_{2}}\in\mathfrak{X}_{\mathrm{Sym}}\left(\mathcal{M}\right)$ we have that $[v_{f_{1}},v_{f_{2}}]$ is Hamiltonian, with Hamiltonian function $\omega (v_{f_{1}},v_{f_{2}})$. }

\proof{Let $v_{f_{1}}, v_{f_{2}}\in\mathfrak{X}_{\mathrm{Sym}}\left(\mathcal{M}\right)$. Then we have

\begin{equation}
d\omega\left(v_{f_{1}}, v_{f_{2}}\right) = di_{v_{f_{2}}} i_{v_{f_{1}}}\omega = \mathcal{L}_{v_{f_{2}}} i_{v_{f_{1}}}\omega - i_{v_{f_{2}}}di_{v_{f_{1}}}\omega = i_{\mathcal{L}_{v_{f_{2}}}v_{f_{1}}}\omega = - i_{\left[ v_{f_{1}}, v_{f_{2}}\right]}\omega\, .
\end{equation}

\qed
}

\noindent
Therefore, $\left[ \mathfrak{X}_{\mathrm{Sym}}\left(\mathcal{M}\right), \mathfrak{X}_{\mathrm{Sym}}\left(\mathcal{M}\right)\right] \subseteq \mathfrak{X}_{\mathrm{Ham}}\left(\mathcal{M}\right)$, and in particular $\mathfrak{X}_{\mathrm{Ham}}\left(\mathcal{M}\right)$ is an ideal in the Lie algebra $\mathfrak{X}_{\mathrm{Sym}}\left(\mathcal{M}\right)$, and the quotient Lie algebra is abelian. Hence, \ref{eq:sechamsym} is an exact sequence of Lie algebras, where $H^{1}\left(\mathcal{M}\right)$ carries the trivial Lie algebra structure.

Consider now the following surjective map

\begin{eqnarray}
h:\, C^{\infty}\left(\mathcal{M}\right) &\to &  \mathfrak{X}_{\mathrm{Ham}}\left(\mathcal{M}\right)\nonumber\\
f &\mapsto & v_{f}\, .
\end{eqnarray}

\noindent
The kernel of $h$ is the space $Z^{0}\left(\mathcal{M}\right) = H^{0}\left(\mathcal{M}\right)$ of locally constant functions. Therefore, we can write the following exact sequence of vector spaces 

\begin{equation}
\label{eq:sechamsymII}
0\to Z^{0}\left(\mathcal{M}\right) \to C^{\infty}\left(\mathcal{M}\right) \to \mathfrak{X}_{\mathrm{Ham}}\left(\mathcal{M}\right)\to 0\, .
\end{equation}

\noindent
It is possible to define a Lie algebra structure on $C^{\infty}\left(\mathcal{M}\right)$ such that (\ref{eq:sechamsymII}) is an exact sequence of Lie algebras. 

\definition{\label{def:poissonbracket} Let $\left(\mathcal{M},\omega\right)$ be a symplectic manifold. The Poisson bracket of two funcions $f, g \in C^{\infty}\left(\mathcal{M}\right)$ is defined as

\begin{equation}
\left\{ f, g\right\} = \omega( v_{f}, v_{g})\, .
\end{equation}

}

\noindent
The Poisson bracket is anti-symmetric. Using Cartan's identity (\ref{eq:cartansidentity}), the Poisson bracket can be rewritten as follows

\begin{equation}
\left\{ f, g\right\} = \mathcal{L}_{v_{f}} g = -\mathcal{L}_{v_{g}} f\, .
\end{equation}

\noindent
Therefore, if $\left\{ f, g\right\} = 0$, then $f$ is constant along solution curves of $v_{g}$ and vice-versa.

\prop{\label{prop:poissonalgebra} The Poisson bracket defines a Lie algebra structure into $C^{\infty}\left(\mathcal{M}, \mathbb{R}\right)$. The map

\begin{eqnarray}
 C^{\infty}\left(\mathcal{M}\right) &\to &  \mathfrak{X}\left(\mathcal{M}\right)\nonumber\\
f &\mapsto & v_{f}\, ,
\end{eqnarray}

\noindent
is a Lie algebra isomorphism, that is 

\begin{equation}
\label{eq:liealgiso}
v_{\left\{f,g\right\}} = [ v_{f} , v_{g} ]\, .
\end{equation}
}

\proof{We have to prove that the Poisson bracket satisfies the Jacobi identity. This follows from 

\begin{equation}
\left\{f,\left\{ g, h\right\}\right\} = \mathcal{L}_{v_{f}}\left\{ g, h\right\} = \omega\left(\left[v_{f}, v_{g}\right], v_{h}\right) + \omega\left( v_{g}, \left[v_{f}, v_{h}\right]\right) = \omega\left(v_{\left\{ f, g\right\}}, v_{h}\right) + \omega\left( v_{g}, v_{_{\left\{ f, g\right\}}}\right) = \left\{h,\left\{ f, g\right\}\right\} + \left\{g,\left\{ f, h\right\}\right\}\, .
\end{equation}

\noindent
Equation (\ref{eq:liealgiso}) is a particular instance of proposition \ref{prop:commham}. \qed

}

\prop{\label{prop:poissonalgebraII} The algebra $\left( C^{\infty}\left(\mathcal{M}, \mathbb{R}\right), \left\{ \cdot, \cdot\right\}\right)$ is a Poisson algebra\footnote{See definition \ref{def:poissonalgebra}.}.

}

\proof{We have to proof that the Poisson bracket satisfies equation (\ref{eq:poissonalgebra}). Indeed we have

\begin{equation}
\left\{ fg, h\right\} = -\mathcal{L}_{v_{h}} (fg) =  -\mathcal{L}_{v_{h}} (f) g -  f\mathcal{L}_{v_{h}} (g) = -\left\{ f, h\right\} g - f \left\{ g, h\right\}\, .
\end{equation}

\qed
}


\subsection{Moment maps}
\label{sec:momentmaps}


Example \ref{ep:Raction} showed that the group of diffeomorphisms $\left\{\rho_{t}:\, \mathcal{M}\to\mathcal{M}\, , \, \, t\in\mathbb{R}\right\}$ generated by a vector field $v\in\mathfrak{X}\left(\mathcal{M}\right)$ can be understood as an action of $\mathbb{R}$ on $\mathcal{M}$ given by

\begin{eqnarray}
\psi :\, \mathbb{R} & \to &\mathrm{Diff}\left(\mathcal{M}\right)\nonumber\\
              t     & \to & \rho_{t}\, .
\end{eqnarray}

\noindent
The action $\psi: \mathbb{R} \to \mathrm{Diff}\left(\mathcal{M}\right)$ is said to be symplectic if it acts on a Symplectic manifold $\left(\mathcal{M},\omega\right)$ preserving the symplectic form $\omega$

\begin{equation}
\rho^{\ast}_{t}\omega = \omega\, .
\end{equation}

\noindent
This is equivalent to

\begin{equation}
\mathcal{L}_{v}\omega = d\left(\iota_{v}\omega\right) = 0\, ,
\end{equation}

\noindent
where $v\in\mathfrak{X}\left(\mathcal{M}\right)$ generates the one-parameter group of diffeomorphisms $\left\{\rho_{t}:\, \mathcal{M}\to\mathcal{M}\, , \, \, t\in\mathbb{R}\right\}$. Therefore, if the action is symplectic, then $\left(\iota_{v}\omega\right)$ is a closed one-form. If, in addition, if $\left(\iota_{v}\omega\right)$ is exact, then the action is said to be Hamiltonian. In that case, there exist a function $f\in C^{\infty}\left(\mathcal{M}\right)$ such that

\begin{equation}
\mathcal{L}_{v}\omega = -df\, .
\end{equation}

\noindent
$f$ is of course the Hamiltonian function associated to $v$, and therefore we can write $v = v_{f}$. The moment map construction generalizes the concept of the Hamiltonian action of $\mathbb{R}$ on a symplectic manifold to a general Lie group $G$ acting on a symplectic manifold. 

\definition{\label{def:momentmaps} Let $\left(\mathcal{M},\omega\right)$ be a symplectic manifold and $G$ a connected Lie group with Lie algebra $\mathfrak{g}$. Let us denote by $\mathfrak{g}^{\ast}$ the dual vector space of $\mathfrak{g}$ and assume that there exists a symplectic action of $G$ on $\mathcal{M}$

\begin{equation}
\psi :\, G\to\mathrm{Symp}\left(\mathcal{M},\omega\right)\, .
\end{equation}

\noindent
Then the action is said to be Hamiltonian if there exists a map

\begin{equation}
\mu :\, \mathcal{M}\to \mathfrak{g}^{\ast}\, ,
\end{equation}

\noindent
satisfying

\begin{enumerate}

\item For each $v\in\mathfrak{g}$, let

\begin{itemize}

\item $\mu^{v} :\, \mathcal{M}\to \mathbb{R}\, ,$ given by $\mu^{v}(p)\equiv <\mu (p), v >$, where $<\cdot, \cdot >$ is the natural pairing of $\mathfrak{g}$ and $\mathfrak{g}^{\ast}$.

\item $\tilde{v}$ be the vector field generated by the one-parameter subgroup $\left\{e^{t v}\, \,  |\, \, t\in\mathbb{R}\right\}\subseteq G$.

\end{itemize}

Then 

\begin{equation}
d\mu^{v} = -\iota_{\tilde{v}}\omega\, ,
\end{equation}

\noindent
that is, $\mu^{v}$ is a Hamiltonian function for the vector field $\tilde{v}$.

\item $\mu$ is equivariant with respect to the given action $\psi$ and the coadjoint action $\mathrm{Ad}^{\ast}$ of $G$ on $\mathfrak{g}^{\ast}$

\begin{equation}
\mu\circ\psi_{g} = \mathrm{Ad}^{\ast}_{g}\circ\mu\, ,\qquad g\in G\, .
\end{equation}

\end{enumerate}
 
}

\noindent

$\left(\mathcal{M},\omega , G, \mu\right)$ is then called a Hamiltonian $G$-space with momenp map $\mu$. For connected Lie groups, Hamiltonian actions can be equivalently defined in terms of the so-called comoment map

\begin{equation}
\mu^{\ast} :\, \mathfrak{g}\to C^{\infty}\left(\mathcal{M}\right)\, ,
\end{equation}

\begin{enumerate}

\item $\mu^{\ast}\left( v\right) \equiv \mu^{v}$ is a Hamiltonian function for the vector field $\tilde{v}$.

\item $\mu^{\ast}$ is a Lie algebra homomorphism

\begin{equation}
\mu^{\ast} \left[ v, w\right] = \left\{\mu^{\ast}\left( v\right), \mu^{\ast}\left( w\right)\right\}\, ,
\end{equation}

\noindent
where $\left\{\cdot, \cdot\right\}$ is the Poisson bracket on $C^{\infty}\left(\mathcal{M}\right)$.

\end{enumerate}

\noindent

\begin{ep}
\label{ep:RS1hamiltonian} {\bf Case $G = S^{1}$.} Here $\mathfrak{g} = \mathbb{R}$ and $\mathfrak{g}^{\ast} = \mathbb{R}$. A moment map $\mu :\, \mathcal{M}\to\mathbb{R}$ satisfies

\begin{enumerate}

\item For the generator $v=1$ of $\mathfrak{g}$ we have $\mu^{v} (p) = \mu (p)\cdot 1$, that is, $\mu^{v} = \mu$ and $\tilde{v}$ is the standard vector field $\mathcal{M}$ generated by $S^{1}$. Then $d\mu =- \iota_{\tilde{v}}\omega$.

\item $\mu$ is invariant $\mathcal{L}_{v}\mu = 0$.

\end{enumerate}

\end{ep}

\cleardoublepage


\chapter{$L_{\infty}$-algebras}
\label{chapter:linfty}

In this section we review $L_{\infty}$-algebras and explicitly describe general $L_{\infty}$-morphisms. An $L_{\infty}$-algebra structure on graded vector space $L$ can be defined to be a collection of skew-symmetric maps $\{l_{k} \maps L^{\tensor k} \to L \}_{k=1}^{\infty}$ with $\deg l_{k} =k-2$ which satisfy a rather complicated generalization of the Jacobi identity. We will therefore start with a more elegant description, given in terms of coalgebras, and prove its equivalence to the previous characterization.


\section{Basic definitions}


\begin{definition}
An $L_{\infty}[1]$-structure on a graded vector space $M$ is a choice of degree one codifferential $Q$ on the coalgebra 

\begin{equation}
C(M) = \bar{S}\left( M\right)\, .
\end{equation}
 
\end{definition}

\begin{thm} 
\label{thm:coalgebraLinfty}
An $L_{\infty}[1]$-structure on a graded vector space $M$, that is, a choice of degree one codiferential $Q$ on $\bar{S}(M)$, uniquely determines a family of degree one linear maps
\begin{equation}
\left(m_{k}: \bar{S}^{k}(M)\to M\right)_{k\in\mathbb{N}^{+}}\, ,
\end{equation}

\noindent
such that

\begin{equation}
\label{eq:L1condition}
\sum_{r+s=k}\sum_{\sigma\in \mathrm{Sh}(r,s)} \epsilon(\sigma) m_{(s+1)}\left(m_{r}\left(x_{\sigma(1)}\otimes\dots\otimes x_{\sigma(r)}\right)\otimes x_{\sigma(r+1)}\otimes\dots\otimes x_{\sigma(k)}\right) = 0\, .
\end{equation}

\noindent
where $\epsilon(\sigma) = \epsilon(\sigma;x_{1},\dots,x_{r})$ and $x_{1}, \dots, x_{k}\in M$. Conversely, any such family $(m_{k})_{k\in\mathbb{N}^{+}}$ of degree one linear maps uniquely determines a degree one codifferential $Q$ on $\bar{S}(M)$.
\end{thm}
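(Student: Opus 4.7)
The plan is to reduce the theorem to two ingredients already provided: Proposition 2.57 (which identifies coderivations on $\bar{S}(M)$ with families of corestrictions) and the observation that $Q \circ Q$ is itself a coderivation. The whole theorem is then essentially a combinatorial bookkeeping exercise.

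First, I would set $m_{k} := Q^{1}_{k} = \pr_{M} \circ Q \vert_{\bar{S}^{k}(M)}$. By Proposition 2.57, giving a degree one coderivation $Q$ on $\bar{S}(M)$ is equivalent to giving the family $(m_{k})_{k \geq 1}$ of degree one linear maps $m_{k} \maps \bar{S}^{k}(M) \to M$; the formula (2.16) explicitly reconstructs $Q$ from the $m_{k}$'s via shuffles. This establishes the required bijection at the level of coderivations, before imposing $Q \circ Q = 0$. So all that remains is to translate $Q^{2} = 0$ into the relations (3.1).

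Next, I would observe that $Q \circ Q = \tfrac{1}{2}[Q,Q]$ is a coderivation of degree $2$ on $\bar{S}(M)$, since the graded commutator of coderivations is again a coderivation. Applying Proposition 2.57 to this new coderivation gives the uniqueness principle I need: $Q \circ Q = 0$ if and only if its corestriction $\pr_{M} \circ (Q \circ Q)\vert_{\bar{S}^{k}(M)}$ vanishes for every $k \geq 1$. This is the key structural step, because it reduces an infinite family of conditions (vanishing on every $\bar{S}^{n}(M)$) to a single projection.

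Finally, I would compute $\pr_{M} \circ (Q \circ Q)$ directly using (2.16). Writing
\begin{equation}
Q(x_{1} \odot \cdots \odot x_{k}) = \sum_{r=1}^{k} \sum_{\sigma \in \Sh(r,k-r)} \epsilon(\sigma)\, m_{r}\bigl(x_{\sigma(1)} \odot \cdots \odot x_{\sigma(r)}\bigr) \odot x_{\sigma(r+1)} \odot \cdots \odot x_{\sigma(k)},
\end{equation}
the output lies in $\bigoplus_{j \geq 1} \bar{S}^{j}(M)$; for the second application of $Q$ to land in $M$ after projection, the only surviving contribution is the one in which the second $Q$ acts via its top component $m_{s+1}$ on the entire $(s+1)$-fold product obtained after the first $Q$. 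Tracking the Koszul signs produced by the $(r,s)$-shuffle exactly reproduces the sum in (3.1), and equating it to zero yields the claimed relation. The main obstacle here is purely bookkeeping: making sure the sign $\epsilon(\sigma)$ carried over from the first application of $Q$ matches the sign appearing in (3.1), and confirming that no cross-terms survive the projection to $M$ (which follows from the fact that $m_{r}(x_{\sigma(1)} \odot \cdots \odot x_{\sigma(r)})$ is already in $M$, so any further splitting via a non-top component of $Q$ would leave the output in $\bar{S}^{\geq 2}(M)$ and be killed by $\pr_{M}$).
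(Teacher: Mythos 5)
Your proposal is correct and follows essentially the same route as the paper: both identify the $m_{k}$ with the corestrictions $Q^{1}_{k}$ via Proposition \ref{prop:codifferentialrestriction} and then translate $Q \circ Q = 0$ into \eqref{eq:L1condition}. The one ingredient you add is the justification the paper leaves implicit --- that $Q \circ Q$ is itself a coderivation and therefore vanishes if and only if all of its corestrictions to $M$ vanish --- which is exactly the right way to make the asserted equivalence rigorous.
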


\proof{Given a codifferential $Q$ on $\bar{S}(M)$, consider the restrictions

\begin{equation}
\label{eq:restrict}
Q_{k}=Q \vert_ {\S^{k}(M)} \maps \S^{k}(M) \to \S(M), \quad 1 \leq k < \infty\, ,
\end{equation}

\noindent
so that $Q = \sum_{k}^{\infty}Q_{k}$, and also the projections

\begin{equation} 
\label{eq:Qproj}
Q^{k}_{m} = \mathrm{pr}_{\bar{S}^{k}(M)} \circ Q_{m} \maps \bar{S}^{m}(M) \to
\bar{S}^{k}(M)\, .
\end{equation}

\noindent
It follows from proposition \ref{prop:codifferentialrestriction} that $Q$ can be uniquely determined by the collection of maps

\begin{equation} 
\label{eq:struct_maps}
Q^{1}_{k} = \mathrm{pr}_{M} \circ Q_{k} \maps \bar{S}^{k}(M) \to M, \quad
k \geq 1\, .
\end{equation}

\noindent
The complete coderivation $Q$ can be written as

\begin{eqnarray}
\label{eq:coder_eq}
Q_{m}\left( x_{\mathrm{1}} \odot \cdots \odot x_{m}\right) = 
Q^{1}_{m}\left( x_{1} \odot \cdots \odot  x_{m}\right) +\\
 \sum^{m-1}_{i=\mathrm{1}} \sum_{\sigma \in \Sh(i,m-i)}
\epsilon\left(\sigma\right) Q^{1}_{i}\left( x_{\sigma(\mathrm{1})} \odot \cdots \odot
 x_{\sigma\left( i\right)} \right)\odot  x_{\sigma\left( i+1\right)} \odot \cdots \odot  x_{\sigma\left( m\right)}\, ,
\end{eqnarray} 

\noindent
for any $x_{i} \in M$. Defining now the maps $\left(m_{k}\right)_{k\in\mathbb{N}^{+}}$ as follows

\begin{equation}
\left(m_{k} = Q^{1}_{k}: \bar{S}^{k}(M)\to M\right)_{k\in\mathbb{N}^{+}}\, ,
\end{equation}

\noindent
the condition $Q \circ Q =0$ is equivalent to the generalized Jacobi identity \eqref{eq:L1condition} for the collection $\left(m_{k}\right)_{k\in\mathbb{N}^{+}}$. In particular, it implies that $l_{1}$ is degree one differential on $L$. On the other hand, let us assume that $\bar{S}(M)$ is equipped with set of degree one maps 

\begin{equation}
\left(m_{k}: \bar{S}^{k}(M)\to M\right)_{k\in\mathbb{N}^{+}}\, ,
\end{equation}

\noindent
obeying equation (\ref{eq:L1condition}). Taking the $\left(m_{k}\right)_{k\in\mathbb{N}^{+}}$ as the $\left( Q^{1}_{k}\right)_{k\in\mathbb{N}^{+}}$ components in Lemma 2.4 in \cite{Lada:1994mn} we conclude that there exists a unique codifferential $Q$ in $\bar{S}(M)$ such that its $\left( Q^{1}_{k}\right)_{k\in\mathbb{N}^{+}}$ restrictions are given by the $\left(m_{k}\right)_{k\in\mathbb{N}^{+}}$ maps.

\qed}

\noindent
Hence, an $L_{\infty}[1]$-structure on a graded vector space $M$ can be equivalently defined in terms of a degree one coderivation $Q$ on $C(M)$ or in terms of a family of morphisms $\left(m_{k}: S^{k}(M)\to M\right)_{k\in\mathbb{N}^{+}}$ obeying (\ref{eq:L1condition}). An $L_{\infty}$-structure is related to an $L_{\infty}[1]$-structure by a degree shift in $M$. In particular, an $L_{\infty}$-structure on a graded vector space $L$ is nothing but an $L_{\infty}[1]$-structure on the graded vector space $M=s^{-1}L$.

\definition{\label{def:Linfinity} An $L_{\infty}$-structure on a graded vector space $L$ is an $L_{\infty}[1]$-structure on the graded vector space $s^{-1}L$.}

\noindent
An equivalent, more practical, definition is the following

\begin{definition}[\cite{Lada:1994mn}] 
\label{def:Linfinity2} 

An $L_{\infty}$-algebra is a graded vector space $L$ together with a collection 

\begin{equation}
\left\{l_{k} \maps L^{\tensor k} \to L| 1 \leq k < \infty \right\}
\end{equation}

\noindent
of graded skew-symmetric linear maps with  $|l_{k}|=2-k$ such that the following identity is satisfied for $1 \leq m < \infty$

\begin{eqnarray} 
\label{eq:gen_jacobi}
   \sum_{\substack{i+j = m+\mathrm{1}\, ,\\ \sigma \in \mathrm{Sh}\left(i,m-i\right)}}
  \left(-1\right)^{\sigma}\epsilon(\sigma)\left(-1\right)^{i\left( j-1\right)} l_{j}
   \left( l_{i}(x_{\sigma(\mathrm{1})}, \dots, x_{\sigma\left( i\right)}), x_{\sigma(i+1)}\, ,
   \ldots, x_{\sigma(m)}\right)=0\, .
\end{eqnarray}

\end{definition}

\proof{Let us prove the equivalence of both definitions. If we consider an $L_{\infty}[1]$ structure on $s^{-1}L$ then condition (\ref{eq:L1condition}) can be written as

\begin{equation}
\label{eq:L1condition2}
\sum_{r+s=k}\sum_{\sigma\in \mathrm{Sh}(r,s)} \epsilon(\sigma) m_{(s+1)}\left(m_{r}\left(s^{-1}x_{\sigma(1)}\otimes\dots\otimes s^{-1}x_{\sigma(r)}\right)\otimes s^{-1}x_{\sigma(r+1)}\otimes\dots\otimes s^{-1}x_{\sigma(k)}\right) = 0\, .
\end{equation}

\noindent
where $\epsilon(\sigma) = \epsilon\left(\sigma; s^{-1}x_{1},\dots, s^{-1}x_{k}\right)$ and $x_{1}, \dots, x_{k}\in L$. Using equation (\ref{eq:sk}) it can be proven that

\begin{equation}
\label{eq:mdec}
m_{r}\left(s^{-1}x_{\sigma(1)}\otimes\dots\otimes s^{-1}x_{\sigma(r)}\right) = (-1)^{\sum_{i=1}^{r}(r-i) |x_{\sigma (i)}|} m_{r}\circ s^{-r}\left(x_{\sigma(1)}\otimes\dots\otimes x_{\sigma(r)}\right)\, .
\end{equation}

\noindent
Using now equation (\ref{eq:mdec}) together with the following commutative diagram

\begin{center}
\begin{tikzpicture}
\label{diag:coalgebradiagramII}
  \matrix (m) [matrix of math nodes,row sep=8em,column sep=9em,minimum width=2em]
  {
\Lambda^{k}\left( L\right) & L \\
S^{k}\left(s^{-1} L\right) & s^{-1} L \\};
  \path[-stealth]
    (m-1-1) edge node [left] {$s^{-k}$} (m-2-1)
    (m-1-1) edge node [above] {$l_{k}$} (m-1-2)
    (m-2-1) edge node [above] {$m_{k}$} (m-2-2)
    (m-1-2) edge node [right] {$s^{-1}$} (m-2-2);
\end{tikzpicture}
\end{center}

\noindent
we obtain

\begin{equation}
m_{r}\circ s^{-r}\left(x_{\sigma(1)}\otimes\dots\otimes x_{\sigma(r)}\right) =  s^{-1}\circ l_{r}\left(x_{\sigma(1)}\otimes\dots\otimes x_{\sigma(r)}\right)\, .
\end{equation}

\noindent
Therefore

\begin{eqnarray}
\label{eq:L1conditionml}
m_{(s+1)}\left(m_{r}\left(s^{-1}x_{\sigma(1)}\otimes\dots\otimes s^{-1}x_{\sigma(r)}\right)\otimes s^{-1}x_{\sigma(r+1)}\otimes\dots\otimes s^{-1}x_{\sigma(k)}\right) =\nonumber\\(-1)^{\sum_{i=1}^{r}(r-i) |x_{\sigma (i)}|+\sum_{i=1}^{k-r+1}(k-r+1-i) |\tilde{x}_{i}|} s^{-1}\circ l_{(s+1)}\left(l_{r}\left(x_{\sigma(1)}\otimes\dots\otimes x_{\sigma(r)}\right)\otimes x_{\sigma(r+1)}\otimes\dots\otimes x_{\sigma(k)}\right) \, ,
\end{eqnarray}

\noindent
where

\begin{equation}
\tilde{x}_{1} = l_{r}\left(x_{\sigma(1)},\hdots , x_{\sigma (r)}\right)\, ,\quad \tilde{x}_{i} = x_{\sigma (r-1+i)}\, ,\quad 2\leq i \leq k-r+1\, .
\end{equation}

\noindent
Since

\begin{equation}
|l_{r}\left(x_{\sigma(1)},\hdots , x_{\sigma (r)}\right)| = 2-r+\sum_{i=1}^{r} |x_{\sigma(i)}|\, , 
\end{equation}

\noindent
we have

\begin{equation}
\label{eq:signs}
\sum_{i=1}^{k-r+1}|\tilde{x}_{i}| = 2-r+\sum^{k}_{i=1}|x_{\sigma (i)}|\, ,\quad (-1)^{\sum_{i=1}^{r}(r-i) |x_{\sigma (i)}|+\sum_{i=1}^{k-r+1}(k-r+1-i) |\tilde{x}_{i}|} =(-1)^{(k-r)r+\sum_{i=1}^{k}(k-i) |x_{\sigma(i)}|}\, .
\end{equation}

\noindent
Finally, using equation (\ref{eq:epsilondecalage}) together with equation (\ref{eq:signs}) in equation (\ref{eq:L1condition}), we obtain equation (\ref{eq:gen_jacobi}).

\qed}

\noindent
Therefore, any $L_{\infty}$-algebra $(L,l_{k})$ corresponds to a certain kind of graded co-algebra $C(s^{-1}L)$ equipped with a co-derivation $Q$ which satisfies the identity

\begin{equation}
Q \circ Q =0\, .
\end{equation} 

\noindent
As we have seen, this identity is the origin of equation \eqref{eq:gen_jacobi}. It is easy to see that for small values of $m$ that equation \eqref{eq:gen_jacobi} is a \emph{generalized Jacobi identity} for the multi-brackets $\{l_{k}\}$. For $k=1$, it implies that the degree one linear map
$l_{1}$ satisfies

\begin{equation}
l_{1} \circ l_{1}=0
\end{equation}

\noindent
and hence every $L_{\infty}$-algebra $(L,l_{k})$ has an underlying cochain complex $(L,d=l_{1})$. For $k=2$ we have that $[\cdot,\cdot] = l_{2}$ is a degree zero linear map that satisfies

\begin{equation}
d \left[ x_{1}, x_{2}\right] = \left[ dx_{1}, x_{2}\right] + (-1)^{|x_{1}|}\left[x_{1}, dx_{2}\right]\, .
\end{equation}

\noindent
Hence $l_{2}$ can be interpreted as a bracket, which is skew symmetric

\begin{equation}
\left[ x_{1}, x_{2}\right] = -(-1)^{|x_{1}||x_{2}|}\left[ x_{2}, x_{1}\right]\, ,
\end{equation}

\noindent
but does not satisfy the usual Jacobi identity.

\begin{definition} \label{def:Lninfinity} A Lie $n$-algebra is a $L_{\infty}$-algebra $\left( L,\left\{l_{k} \right\}\right )$ such that the corresponding graded vector space $L$ is concentrated in degrees $0, -\mathrm{1}, \dots, \mathrm{1}-n$.
\end{definition}

\noindent
Notice that if $\left( L,\left\{l_{k} \right\}\right )$ is a Lie $n$-algebra, simply by degree counting then $l_{k} =0 $ for $k > n+1$. Therefore, a Lie $1$-algebra is nothing but a Lie algebra.


\section{$L_{\infty}$-morphisms} 
\label{sec:linfinitymorphisms}


The notion of $L_{\infty}$-morphism will be essential in this work. We now give \cite{Lada:1994mn} a naive definition of what could be an $L_{\infty}-morphism$.

\begin{definition}
\label{def:strict_morph_def_1}
If $(L^{1},l^{1}_{k})$ and $(L^{2},l^{2}_{k})$ be $L_{\infty}$-algebras then a degree zero linear map $f \maps L^{1} \to L^{2}$ is a \emph{strict $L_{\infty}$-morphism} if and only if the following holds

\begin{equation} 
\label{eq:strict_def_eq}
l^{2}_{k} \circ f^{\tensor k} = f \circ l^{1}_{k} \quad \forall k \geq 1\, .
\end{equation}
\end{definition}

\noindent
The definition above however does not reflect the higher structure which resides within the theory in a natural way. Actually, there is a better definition, see Remark 5.3 of \cite{Lada:1994mn}, which uses the previously mentioned relationship between $L_{\infty}$-algebras and differential graded coalgebras. This turns out to give to the collection of morphisms between two $L_{\infty}$-algebras the
structure of a  simplicial set, see reference \cite{1998math.....12034H}, which therefore permits to consider homotopies among morphisms, homotopies among homotopies \emph{et cetera}. As we did when we defined $L_{\infty}$-algebras, we define first a morphism of $L_{\infty}[1]$-algebras. The corresponding definition for $L_{\infty}$-algebras can then be obtained by a degree shift in $L$.

\begin{definition} 
\label{def:L1infty-morph_basic_def}

An $L_{\infty}[1]$-morphism between $L_{\infty}[1]$-algebras $(M^{1},m^{1}_{k})$ and $\left(M^{2},m^{2}_{k}\right)$ is a morphism $F[1] \maps \left( C(M^{1}),Q^{1} \right) \to \left( C(M^{2}),Q^{2} \right)$ between the corresponding underlying differential graded coalgebras. $F[1]$ is thus a morphism between the graded coalgebras $C(M^{1})$ and $C(M^{2})$ such that

\begin{equation} 
\label{eq:preserve_codiff[1]}
F[1] \circ Q^{1} = Q^{2} \circ F[1]\, .
\end{equation} 
\end{definition}

\noindent
As it turns out, an $L_{\infty}[1]$-morphism $F[1]$ between $(M^{1},m^{1}_{k})$ and $(M^{2},m^{2}_{k})$ corresponds to an infinite collection of symmetric, degree zero, `structure maps'

\begin{equation}
F[1] = \left(f_{k}[1] \maps S^{k}\left(M^{1}\right) \to M^2 \quad 1 \leq k < \infty\right)\, ,
\end{equation}

\noindent
and such that a given compatibility relation with the multi-brackets must be satisfied. More precisely, the following proposition holds.

\prop{\label{prop:morphism} Let $(M^{1},m^{1}_{k})$ and $(M^{2},m^{2}_{k})$ be $L_{\infty}[1]$-algebras. A morphism from $(M^{1},m^{1}_{k})$ to $(M^{2},m^{2}_{k})$ is a family of morphism

\begin{equation}
F[1] = \left(f_{k}[1] \maps S^{k}\left(M^{1}\right) \to M^2 \quad 1 \leq k < \infty\right)\, ,
\end{equation}

\noindent
such that

\begin{eqnarray}
\label{eq:morphism1condition}
\sum_{r+s=k}\sum_{\sigma\in\mathrm{Sh}(r,s)} \epsilon\left(\sigma ; x_{1}, \hdots , x_{k}\right) f_{s+1}[1]\left(m_{r}\left(x_{\sigma (1)},\hdots, x_{\sigma (r)}\right),x_{\sigma (r+1)},\hdots, x_{\sigma (k)}\right) = \nonumber\\ \sum_{l=1}^{k}\sum_{j_{1}+\cdots +j_{l} = k}\sum_{\tau\in \Sigma_{k}} \frac{\epsilon\left(\tau ; x_{1}, \hdots , x_{k}\right)}{l!j_{1}!\dots j_{l}!} n_{l}\left(f_{j_{1}}[1]\left(x_{\sigma (\tilde{k}_{1}+1)},\hdots, x_{\sigma (\tilde{k}_{1}+j_{1})}\right),\hdots, f_{j_{l}}[1]\left(x_{\sigma (\tilde{k}_{l}+1)},\hdots, x_{\sigma (\tilde{k}_{l}+j_{l})}\right)\right) \, ,
\end{eqnarray}

\noindent
where $\tilde{k}_{1} = 0$ and $\tilde{k}_{s} = \sum_{i=1}^{s-1} j_{i}\, ,\,\, 1<s\leq l$.

}

\proof{See chapter 2 of \cite{coisotropic}.}

\noindent
The corresponding notion of morphism of $L_{\infty}$-algebras goes as follows

\begin{definition} 
\label{def:Linfty-morph_basic_def}

An $L_{\infty}$-morphism $F$ between $L_{\infty}$-algebras $(L^{1},l^{1}_{k})$ and $\left(L^{2},l^{2}_{k}\right)$ is a morphism $F \maps \left( C(s^{-1}L^{1}),Q^{1} \right) \to \left( C(s^{-1}L^{2}),Q^{2} \right)$ between their corresponding differential graded coalgebras. $F$ is hence a morphism between the graded coalgebras $C(s^{-1}L^{1})$ and $C(s^{-1}L^{2})$ such that

\begin{equation} 
\label{eq:preserve_codiff}
F \circ Q^{1} = Q^{2} \circ F\, .
\end{equation} 
\end{definition}

\noindent
As in the $L_{\infty}[1]$ case, the notion of $L_{\infty}$-morphism corresponds to an infinite family of maps, which in this case are skew-symmetric structure maps

\begin{equation}
F = \left(f_{k} \maps \Lambda^{k}L^{1} \to L^2 \quad 1 \leq k < \infty\right)\, ,
\end{equation}

\noindent
where $|f_{k}|=1-k$. Here again the family of maps have to satisfy a somewhat complicated compatibility relation involving the multi-brackets. Such compatibility relation can be obtained from equation (\ref{eq:morphism1condition}) by means of the following commutative diagram

\begin{center}
\begin{tikzpicture}
\label{diag:coalgebradiagram}
  \matrix (m) [matrix of math nodes,row sep=8em,column sep=9em,minimum width=2em]
  {
\Lambda^{k}\left( L^{1}\right) & L^{2} \\
S^{k}\left(s^{-1} L^{1}\right) & s^{-1} L^{2} \\};
  \path[-stealth]
    (m-1-1) edge node [left] {$s^{-k}$} (m-2-1)
    (m-1-1) edge node [above] {$f_{k}$} (m-1-2)
    (m-2-1) edge node [above] {$f[1]_{k}$} (m-2-2)
    (m-1-2) edge node [right] {$s^{-1}$} (m-2-2);
\end{tikzpicture}
\end{center}

\noindent
by performing the corresponding degree shift in $L^{1}$ and $L^{2}$. 
%





\noindent
We see that, particular, the degree zero map $f_{1}$ is a morphism between the corresponding complexes $(L^{1},l^{1}_{1})$ and $(L^{2},l^{2}_{1})$

\begin{equation}
f_{1}\circ l^{1}_{1} = l^{2}_{1}\circ f_{1}\, .
\end{equation}

\noindent
As it happens for $L_{\infty}[1]$-morphisms, the compatibility relation between the family of maps $\left( f_{i}\right)_{i\geq 1}$ and the multibrackets precisely corresponds in the language of coalgebras  to equation (\ref{eq:preserve_codiff}). It can be easily seen that strict morphisms as defined in \ref{def:strict_morph_def_1} correspond to the case given by $f_{i}=0$ for $i \geq 2$. $L_{\infty}$-morphisms can be composed in the standard sense, and therefore it is possible to consider the category of $L_{\infty}$-algebras without explicit use the higher structure present in $L_{\infty}$-morphisms.

We define now the notion of $L_{\infty}$-\emph{quasi-isomorphism}. This definition naturally reflexts the homotopical structure that exists between morphisms.

\begin{definition} 
\label{def:Linfty_qiso_def}
Let $(f_{k}) \maps (L^{1},l^{1}_{k}) \to (L^{2},l^{2}_{k})$ be an $L_{\infty}$-algebra morphisim. Then we say that $\left( f_{k}\right)_{k\geq 1}$ is an $L_{\infty}$-quasi-isomorphism if and only if the corresponding morphism of complexes

\begin{equation}
f_{1} \maps (L^{1},l^{1}_{1}) \to (L^{2},l^{2}_{1}) 
\end{equation}

\noindent
induces an isomorphism on the cohomology of the underlying complexes

\begin{equation}
H^{\bullet} \left(f_{1}\right) : H^{\bullet}\left(L^{1}\right) \xrightarrow{\cong} H^{\bullet}\left(L^{2}\right)\, .
\end{equation}

\end{definition}


\subsection{Morphisms from Lie algebras to $L_{\infty}$-algebras}\label{secP}


Since it will be useful in chapter \ref{chapter:multisymplectic},  we will consider $L_{\infty}$-algebra morphisms whose sources are simply Lie algebras $(\mathfrak{g},[\cdot,\cdot])$. In that case the conditions that the components of the morphism must satisfy are extremely simplified and the resulting expression can be indeed used for practical purposes. We will assume also that the image $(L,l_{k})$ of the $L_{\infty}$-algebra morphism is a Lie-$n$ algebra such that 

\begin{equation}
\label{eq:property}
\forall\,\, i\geq \mathrm{2} \qquad l_{i}\left(x_{1},\hdots,x_{i}\right) = 0 \quad\mathrm{whenever}\quad \sum_{k=1}^{i}|x_{k}|<0\, .
\end{equation}

\noindent
This is indeed the relevant case for Lie-$n$ algebras arising from $n$-plectic manifolds, as we will see in chapter \ref{chapter:multisymplectic}. The relevant proposition is then the following

\begin{prop} 
\label{prop:Lie_alg_P_cor}
Let $\left(\mathfrak{g},[\cdot,\cdot]\right)$ be a Lie algebra and let $\left(L,l_{k}\right)$ is a Lie $n$-algebra that 
satisfies the property \eqref{eq:property}. Then a collection of $n$ anti-symmetric maps

\begin{equation}
f_{m} : \g^{\otimes m} \to L, \quad |f_{m}| = 1-m, \quad 1
\leq m \leq n\, ,
\end{equation}

\noindent
can be taken to be the components of an $L_{\infty}$-morphism $\left( f_{k}\right)_{k\geq 1} :\mathfrak{g}\to L$ if and only if $\forall x_{i} \in \g$

\begin{eqnarray}
\label{eq:acor_eq1}
\sum_{\mathrm{1} \leq i < j \leq m}
(-1)^{i+j+1}f_{m-1}\left(\left[ x_{i},x_{j}\right],x_{1},\ldots,\widehat{x_{i}},\ldots,\widehat{x_{j}},\ldots,x_{m}\right)\\
=l_{1} f_{m}\left(x_{1},\ldots,x_{m}\right) + l_{m}\left(f_{1}(x_{1}),\ldots,f_{1}(x_{m})\right)\, .
\end{eqnarray}

\noindent
for $2 \leq m \leq n$ and

\begin{eqnarray} 
\label {eq:acor_eq2}
\sum_{\mathrm{1} \leq i < j \leq n+\mathrm{1}}
(-1)^{i+j+1}f_{n}\left(\left[ x_{i},x_{j}\right],x_{\mathrm{1}},\ldots,\widehat{x_{i}},\ldots,\widehat{x_{j}},\ldots,x_{n+\mathrm{1}}\right)
=l_{n+1}\left(f_{1}(x_{1}),\ldots,f_{1}(x_{n+\mathrm{1}})\right)\, .
\end{eqnarray}

\end{prop}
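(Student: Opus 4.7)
The plan is to derive the two equations \eqref{eq:acor_eq1}--\eqref{eq:acor_eq2} from the general $L_\infty$-morphism compatibility relation (equation \eqref{eq:preserve_codiff} of Definition \ref{def:Linfty-morph_basic_def}, written out in components via Proposition \ref{prop:morphism} after the d\'ecalage between $L_\infty[1]$ and $L_\infty$). First I would specialise this relation to the case where the source is a Lie algebra $\mathfrak{g}$ viewed as a Lie $1$-algebra, and then use the degree constraint \eqref{eq:property} on the target $(L,l_k)$ to show that only very few terms survive.

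For the left-hand side of the compatibility relation, the sum over $r+s=m$ involves $l_r^{\mathfrak{g}}$; since the only nonzero structure map of $\mathfrak{g}$ is $l_2=[\cdot,\cdot]$, the sum collapses to $r=2$ and produces exactly the expression $\sum_{i<j}(-1)^{i+j+1}f_{m-1}([x_i,x_j],x_1,\ldots,\widehat{x_i},\ldots,\widehat{x_j},\ldots,x_m)$ appearing on the LHS of \eqref{eq:acor_eq1}--\eqref{eq:acor_eq2}, up to reordering via the Koszul sign of the $(2,m-2)$-unshuffle (which is trivial here since all $x_i$ have degree $0$). For the right-hand side, each $x_i\in\mathfrak{g}$ has degree $0$, so $f_{j_i}(x_{\sigma(\cdot)},\ldots)$ has degree $1-j_i$. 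Consequently the total input-degree of $l_\ell\bigl(f_{j_1}(\ldots),\ldots,f_{j_\ell}(\ldots)\bigr)$ is $\sum_{i=1}^{\ell}(1-j_i)=\ell-m$. By property \eqref{eq:property}, for $\ell\geq 2$ this composition vanishes unless $\ell-m\geq 0$; combined with $\sum j_i=m$ and $j_i\geq 1$ one obtains $\ell=m$ with $j_1=\cdots=j_\ell=1$. Together with the $\ell=1$ contribution (which is never killed by \eqref{eq:property}, since that hypothesis requires $i\geq 2$), this leaves exactly the two terms $l_1(f_m(x_1,\ldots,x_m))$ and $l_m(f_1(x_1),\ldots,f_1(x_m))$, which are the RHS of \eqref{eq:acor_eq1}.

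It then remains to treat the boundary cases. For $m=n+1$, the map $f_{n+1}$ would carry degree $-n$, which lies strictly below the bottom degree $1-n$ of $L$; hence $f_{n+1}\equiv 0$ automatically, the differential term $l_1 f_{n+1}$ drops out, and one obtains \eqref{eq:acor_eq2}. For $m>n+1$ the same degree count forces every $f_{m-1}$, $f_m$, and $l_m$ to vanish, so the morphism condition is trivially satisfied and imposes no further constraints. For the converse direction, defining $f_k\equiv 0$ for $k>n$ and using the data $(f_1,\ldots,f_n)$ satisfying \eqref{eq:acor_eq1}--\eqref{eq:acor_eq2}, the same case analysis shows that the full compatibility relation of Definition \ref{def:Linfty-morph_basic_def} holds for every $m\geq 1$. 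The main technical obstacle will be bookkeeping of the Koszul signs, in particular tracing the sign $(-1)^\sigma \epsilon(\sigma)(-1)^{i(j-1)}$ through equation \eqref{eq:epsilondecalage} to confirm that the antisymmetric combinations on the LHS reduce to the advertised sign $(-1)^{i+j+1}$; all $\epsilon(\sigma)$ factors are trivial since $|x_i|=0$, so only the explicit sign $(-1)^\sigma$ of the $(2,m-2)$-unshuffle together with the $(-1)^{i(j-1)}$ factor need to be combined.
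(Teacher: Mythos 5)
Your proposal is correct and follows essentially the same route as the source the paper relies on: the paper itself gives no argument here but simply defers to appendix A.5 of \cite{2013arXiv1304.2051F}, and that proof proceeds exactly as you do, specialising the coalgebra morphism identity to a degree-zero source and using property \eqref{eq:property} together with the degree count $\sum_i(1-j_i)=\ell-m$ to reduce the right-hand side to the $\ell=1$ and $\ell=m$ terms, with $f_{n+1}$ forced to vanish for degree reasons in the top case. The only outstanding item is the sign bookkeeping through the d\'ecalage \eqref{eq:epsilondecalage}, which you correctly flag and for which your computation of the $(2,m-2)$-unshuffle sign $(-1)^{i+j+1}$ is the right starting point.
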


\begin{proof}
See appendix A.5 of \cite{2013arXiv1304.2051F}
\end{proof}

\cleardoublepage


\chapter{Multisymplectic Geometry}
\label{chapter:multisymplectic}


\section{$n$-plectic manifolds}


We will closely follow \cite{2013arXiv1304.2051F}. For more details about Multisymplectic Geometry the interested reader can consult \cite{1998math......5040E,Ibort,CILeon,JAZ:4974756,2011arXiv1106.4068R}.

\begin{definition}
\label{def:n-plectic_def}
A differentiable manifold $\mathcal{M}$ is said to be \emph{$n$-plectic} or multisymplectic if it is equipped with an $(n+1)$-form $\omega\in\Omega^{n+1}\left(\mathcal{M}\right)$ such that it is
both closed

\begin{equation}
d\omega=0\, ,
\end{equation}

\noindent
and non-degenerate

\begin{equation}
\forall\,\, p \in \mathcal{M} \qquad  \forall\,\, u \in T_{p}\mathcal{M},\,\,\qquad \iota_{u} \omega =0 \Rightarrow u =0\, .
\end{equation}

\noindent
If $\omega$ is an $n$-plectic form on $\mathcal{M}$, then we call the pair $(\mathcal{M},\omega)$ 
an $n$-plectic manifold. More generally, if $\omega$ is closed, but not necessarily
non-degenerate, then we call $(\mathcal{M},\omega)$ a pre-$n$-plectic manifold. We will only deal with $n$-plectic manifolds, although most of our results can be straightforwardly extended to the pre-$n$-plectic case.
\end{definition}

\noindent
Please notice that a 1-plectic manifold is simply a symplectic manifold.

\begin{definition}
\label{def:multidiff} Given two $n$-plectic manifolds $\left(\mathcal{M}_{1},\omega_{1}\right)$ and $\left(\mathcal{M}_{2},\omega_{2}\right)$, a diffeomorphism $F\maps \mathcal{M}_{1}\to\mathcal{M}_{2}$ is said to be a multisymplectic diffeomorphism if and only if $F^{\ast}\omega_{2} = \omega_{1}$.
\end{definition}

\begin{definition} 
\label{def:hamiltonian}
Given an $n$-plectic manifold $\left(\mathcal{M},\omega\right)$, an $(n-1)$-form $\beta\in\Omega^{n-1}\left(\mathcal{M}\right)$ is said to be \emph{Hamiltonian} if and only if there exists a vector field $u_{\beta} \in \mathfrak{X}\left(\mathcal{M}\right)$ such that

\begin{equation}
d\beta= -\iota_{u_{\beta}} \omega\, .
\end{equation}

\noindent
We say then that $u_{\beta}$ is a \emph{Hamiltonian vector field} corresponding to $\beta$. We respectively denote by $\Omega^{n-1}_{\mathrm{Ham}}\left(\mathcal{M}\right)$ and $\mathfrak{X}_{\mathrm{Ham}}\left(\mathcal{M}\right)$, the set of Hamiltonian $(n-1)$-forms and the set of Hamiltonian vector fields on an $n$-plectic manifold, which are vector spaces. Note that, due to the non-degeneracy of $\omega$, for every Hamiltonian form there is a unique Hamiltonian vector field associated.

\end{definition}

\begin{definition}
\label{def:loc_ham}
A vector field $u$ on a $n$-plectic manifold $\left(\mathcal{M},\omega\right)$ is a \emph{local Hamiltonian vector field} if and only if

\begin{equation}
\label{eq:localhamm}
\mathcal{L}_{u}\omega =0\, ,
\end{equation}

\noindent
We denote by $\mathfrak{X}_{\mathrm{Ham}}\left(\mathcal{M}\right)$ the vector space of local Hamiltonian vector fields.
\end{definition}

\noindent
Please notice that equation (\ref{eq:localhamm}) is equivalent to

\begin{equation}
\label{eq:localhamm2}
di_{v}\omega =0\, ,
\end{equation}

\noindent
and therefore, for Hamiltonian vector fields $i_{u}\omega$ is an exact $n$-form while for locally Hamiltonian vector fields $i_{u}\omega$ is a closed $n$-form, which can be always locally written in terms of an exact form and therefore the name \emph{local Hamiltonian vector field}. If $H^{1}_{\mathrm{dR}}\left(\mathcal{M}\right) = 0$ both definitions of course coincide.

\begin{definition}
\label{def:bracket_def}
Let $\left(\mathcal{M},\omega\right)$ be a $n$-plectic manifold. Given $\alpha,\beta\in
\Omega^{n-1}_{\mathrm{Ham}}\left(\mathcal{M}\right)$, we define then the \emph{bracket} $\{ \alpha , \beta\}$ to be the $(n-1)$-form given by

\begin{equation}
\{ \alpha , \beta\} = \iota_{u_{\beta}}\iota_{u_{\alpha}}\omega\, ,
\end{equation}

\noindent
where $u_{\alpha}$ and $u_{\beta}$ respectively stand for the Hamiltonian vector fields for $\alpha$ and $\beta$.
\end{definition}

\begin{prop}
\label{prop:brac_prop}
Let $\left(\mathcal{M},\omega\right)$ be an $n$-plectic manifold and let $u_{1},u_{2} \in \mathfrak{X}_{\mathrm{Ham}}\left(\mathcal{M}\right)$ be local Hamiltonian vector fields. Then $\left[ u_{1}, u_{2}\right]$ is a global Hamiltonian vector field with

\begin{equation}
d\iota_{u_{1} \wedge u_{2}} \omega = -\iota_{[u_{1},u_{2}]} \omega\, ,
\end{equation}

\noindent
and thus  $\mathfrak{X}_{\mathrm{Ham}}\left(\mathcal{M}\right)$ and $\Xham\left(\mathcal{M}\right)$ are Lie subalgebras of $\mathfrak{X}\left(\mathcal{M}\right)$.
\end{prop}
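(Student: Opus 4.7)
The plan is to derive the displayed identity $d\iota_{u_{1}\wedge u_{2}}\omega=-\iota_{[u_{1},u_{2}]}\omega$ by a direct Cartan-calculus computation, and then read off both subalgebra statements as immediate consequences. The key tools are already laid out in section \ref{sec:cartancalculus}: Cartan's magic formula \eqref{eq:Lie} and the commutator identity \eqref{eq:commutator}, together with the defining properties of $\omega$ (closed and with $\mathcal{L}_{u_{i}}\omega=0$ for local Hamiltonian $u_{i}$).

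First I would compute $\iota_{[u_{1},u_{2}]}\omega$ using \eqref{eq:commutator} specialized to vector fields, which reads $\iota_{[u_{1},u_{2}]}\omega=\mathcal{L}_{u_{1}}\iota_{u_{2}}\omega-\iota_{u_{2}}\mathcal{L}_{u_{1}}\omega$. The second term vanishes because $u_{1}$ is locally Hamiltonian. For the first term, Cartan's formula gives $\mathcal{L}_{u_{1}}\iota_{u_{2}}\omega=d\iota_{u_{1}}\iota_{u_{2}}\omega+\iota_{u_{1}}d\iota_{u_{2}}\omega$, and the second summand also vanishes, since $d\iota_{u_{2}}\omega=\mathcal{L}_{u_{2}}\omega-\iota_{u_{2}}d\omega=0$ (using $\mathcal{L}_{u_{2}}\omega=0$ and $d\omega=0$). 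Hence
\begin{equation}
\iota_{[u_{1},u_{2}]}\omega=d\iota_{u_{1}}\iota_{u_{2}}\omega.
\end{equation}
Converting to the multivector interior product via \eqref{eq:interior}, which yields $\iota_{u_{1}\wedge u_{2}}\omega=\iota_{u_{2}}\iota_{u_{1}}\omega=-\iota_{u_{1}}\iota_{u_{2}}\omega$ by the graded anticommutativity of contractions with odd vector fields, this rearranges precisely to $d\iota_{u_{1}\wedge u_{2}}\omega=-\iota_{[u_{1},u_{2}]}\omega$.

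This identity exhibits the $(n-1)$-form $-\iota_{u_{1}\wedge u_{2}}\omega$ as a global Hamiltonian form whose Hamiltonian vector field (which is unique by non-degeneracy of $\omega$) is $[u_{1},u_{2}]$, proving the first half of the statement. The subalgebra claims then follow immediately: $\Xham(\mathcal{M})$ and its local analogue are both vector subspaces of $\mathfrak{X}(\mathcal{M})$, and the formula just established shows that the Lie bracket of two local Hamiltonian vector fields is globally Hamiltonian, hence in particular locally Hamiltonian; since every globally Hamiltonian vector field is locally Hamiltonian, the same closure property restricts to $\Xham(\mathcal{M})$.

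The only genuine subtlety is tracking the signs induced by \eqref{eq:commutator} when one of the arguments is a degree-one vector field (so the prefactor $(-1)^{(|u|-1)|v|}$ collapses to $1$) and by \eqref{eq:interior} when commuting two interior products. Once these are pinned down, the computation is three lines; there is no real obstacle beyond bookkeeping, and no appeal to non-degeneracy is needed for the identity itself (non-degeneracy enters only when asserting uniqueness of the Hamiltonian vector field associated to the $(n-1)$-form $-\iota_{u_{1}\wedge u_{2}}\omega$).
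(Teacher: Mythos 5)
Your proof is correct and follows essentially the same route as the paper's: apply the commutator identity \eqref{eq:commutator} to get $\iota_{[u_{1},u_{2}]}\omega=\mathcal{L}_{u_{1}}\iota_{u_{2}}\omega$, expand by Cartan's formula \eqref{eq:Lie}, and kill the extra term using $d\iota_{u_{2}}\omega=0$. You are in fact slightly more complete than the paper, which stops before carrying out the final sign conversion via \eqref{eq:interior} and the subalgebra conclusion that you spell out.
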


\begin{proof}
Let $u_{1},u_{2}$ be locally Hamiltonian vector fields. Then by equation (\ref{eq:commutator}),

\begin{equation}
\mathcal{L}_{u_{1}} \iota_{u_{2}} \omega = \iota_{\left[ u_{1},u_{2}\right]} \omega\, .
\end{equation}

\noindent
Using now equation (\ref{eq:Lie}),

\begin{equation}
\mathcal{L}_{u_{1}} \iota_{u_{2}} \omega = \iota_{u_{1}} d\iota_{u_{2}} \omega
+ d \iota_{u_{1}} \iota_{u_{2}} \omega\, .
\end{equation}

\noindent
However $\iota_{u_{1}} d\iota_{u_{2}} \omega=0$, since $d\iota_{u_{2}}=\mathcal{L}_{u_{2}} - \iota_{u_{2}}d$. \qed 
\end{proof}

Therefore, proposition\ref{prop:brac_prop} implies for instance that if $u_{\alpha}$ and $u_{\beta}$ are respectively Hamiltonian vector fields for $\alpha $ and $\beta$, then $\left[ u_\alpha, u_\beta\right]$ is a Hamiltonian vector field for $\left\{ \alpha,\beta\right\}$. Notice that the bracket defined in \ref{def:bracket_def} is skew-symmetric but it fails to satisfy the Jacoby identity. In particular we have\footnote{See proposition 3.5 in reference \cite{2011arXiv1106.4068R}.} 

\begin{equation}
\label{eq:failjacobi}
\left\{\alpha_{1},\left\{\alpha_{2},\alpha_{3}\right\}\right\}-\left\{\left\{\alpha_{1},\alpha_{2}\right\},\alpha_{3}\right\}-\left\{\alpha_{2},\left\{\alpha_{1},\alpha_{3}\right\}\right\} = -d\iota_{v_{\alpha_{1}}\wedge v_{\alpha_{2}}\wedge v_{\alpha_{3}}}\omega\, .
\end{equation}

\noindent
Therefore, the space $\ham{n-1}$ of Hamiltonian forms equipped with the bracket $\left\{\cdot,\cdot\right\}$ is not a Lie algebra unless $n=1$, which is the well-know symplectic case. Hence, we cannot straightforwardly extend the Poisson structure present in the set of functions on a symplectic manifold to the set of Hamiltonian forms on a multisymplectic manifold. However, equation (\ref{eq:failjacobi}) shows that $\left\{\cdot,\cdot\right\}$ fails to satisfy the Jacobi identity by an exact form, which suggests the existence of an underlying $n$-Lie algebra structure of which $\ham{n-1}$ would be part of. Roughly speaking, if we identify the interior product of $k$ Hamiltonian vector fields with $\omega$ as $l_{k}$ acting on the corresponding $k$ Hamiltonian forms, then equation \ref{eq:failjacobi} is the condition that $l_{2}$ and $l_{3}$ have to obey if they are part of an Lie-$n$ algebra.

We present now a theorem that gives a natural $L_{\infty}$-structure, in particular a $n$-Lie algebra structure, on differential forms, extending the bracket $\left\{\cdot,\cdot\right\}$ on $\Omega^{n-1}_{\mathrm{Ham}}\left(\mathcal{M}\right)$. See theorem 5.2 in \cite{2012LMaPh.100...29R} and theorem 6.7 in \cite{2010arXiv1003.1004Z}. A detailed exposition can be found in \cite{2011arXiv1106.4068R}.

\begin{thm} 
\label{thm:ham-infty}
Let $\left(\mathcal{M},\omega\right)$ be an $n$-plectic manifold . Then there exists a Lie $n$-algebra
$L_{\infty}(\mathcal{M},\omega)=(L,\left\{l_{k} \right\})$ with underlying graded vector space 

\begin{equation}
L^{i} =
\begin{cases}
\Omega^{n-1}_{\mathrm{Ham}}\left(\mathcal{M}\right) & k = 0,\\
\Omega^{n-1+k}\left(\mathcal{M}\right) & 1-n \leq k < 0\, ,
\end{cases}
\end{equation}

\noindent
and maps  $\left\{ l_{k} : L^{\otimes k} \to L| 1 \leq k < \infty \right\}$ defined as follows

\begin{equation}
l_{1}\left( \beta\right)=d\beta\, ,
\end{equation}

\noindent
if $|\beta | < 0$ and

\begin{equation}
l_{k}\left( \beta_{1},\hdots, \beta_{k}\right) =
\begin{cases}
\xi(k) \iota\left( u_{\beta_{1}}\wedge\cdots\wedge u_{\beta_{k}}\right) \omega  & \text{if  $|\beta_{1}\otimes\cdots\otimes\beta_{k}| = 0$},\\
0 & \text{if $|\beta_{1}\otimes\cdots\otimes\beta_{k}| < 0$}\, ,
\end{cases}
\end{equation}

\noindent
for $k>1$, where $u_{\beta_{i}}$ is the Hamiltonian vector field associated to $\beta_{i} \in \Omega^{n-1}_{\mathrm{Ham}}\left(\mathcal{M}\right)$ and $\xi(k)=-(-1)^{\frac{k(k+1)}{2}}$.
\end{thm}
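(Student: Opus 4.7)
My plan is to verify the three requirements of Definition~\ref{def:Linfinity2}: that each $l_k$ lands in the appropriate graded piece of $L$, that each is graded skew-symmetric, and that the collection satisfies the generalized Jacobi identity~\eqref{eq:gen_jacobi}. The first two are routine; the core of the proof is an inductive Cartan-calculus identity for $d\,\iota(u_1\wedge\cdots\wedge u_k)\omega$ together with a case analysis of~\eqref{eq:gen_jacobi} organized by the degrees of its inputs.

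\textbf{Well-definedness and antisymmetry.} For $l_1=d$ acting on $\beta\in L^{k}=\Omega^{n-1+k}\left(\mathcal{M}\right)$ with $k<-1$, the image lies in $\Omega^{n+k}\left(\mathcal{M}\right)=L^{k+1}$. When $k=-1$, $d\beta$ is an exact $(n-1)$-form, hence trivially Hamiltonian with Hamiltonian vector field $0$, so it sits in $L^{0}$. For $l_{k}$ with $k\geq 2$ applied to inputs of total degree zero (all lying in $L^{0}$), the image $\iota(u_{\beta_1}\wedge\cdots\wedge u_{\beta_k})\omega$ is an $(n+1-k)$-form, matching $L^{2-k}$; in the case $k=2$ Proposition~\ref{prop:brac_prop} ensures that the image is moreover Hamiltonian, with Hamiltonian vector field $[u_{\beta_1},u_{\beta_2}]$. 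Graded skew-symmetry is trivial for $l_1$, and for $k\geq 2$ reduces to antisymmetry of the wedge product of vector fields, the Koszul signs being $+1$ on degree-zero inputs.

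\textbf{Key identity.} The crux of the Jacobi verification is the formula, for Hamiltonian vector fields $u_1,\ldots,u_k$ on $(\mathcal{M},\omega)$,
\begin{equation}
d\,\iota(u_1\wedge\cdots\wedge u_k)\omega \;=\; \sum_{1\leq i<j\leq k}(-1)^{i+j}\,\iota\bigl([u_i,u_j]\wedge u_1\wedge\cdots\widehat{u_i}\cdots\widehat{u_j}\cdots\wedge u_k\bigr)\omega.
\end{equation}
I would prove this by induction on $k$. The base case $k=2$ is Proposition~\ref{prop:brac_prop}. For the inductive step one writes $U=u_1\wedge\cdots\wedge u_k$ and uses equation~\eqref{eq:Lie} together with $d\omega=0$ to rewrite $d\iota_U\omega=\mathcal{L}_U\omega$; peeling off $u_k$ via $d\iota_{u_k}\iota_{u_{k-1}}\cdots\iota_{u_1}\omega=\mathcal{L}_{u_k}\iota(u_1\wedge\cdots\wedge u_{k-1})\omega-\iota_{u_k}\,d\iota(u_1\wedge\cdots\wedge u_{k-1})\omega$ and invoking equation~\eqref{eq:commutator} to move each $\mathcal{L}_{u_k}$ past the remaining interior products produces precisely the bracketed terms on the right-hand side, since $\mathcal{L}_{u_i}\omega=0$ for each Hamiltonian $u_i$.

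\textbf{Jacobi identity and main obstacle.} With the key identity in hand I would verify~\eqref{eq:gen_jacobi} by cases on the total degree of the inputs $\beta_1,\ldots,\beta_m$. When all inputs lie in $L^{0}$, the identity in $m+1$ variables becomes, after pulling out the $\xi(k)$ prefactors, exactly the Cartan identity above with $k=m+1$; the explicit sign $\xi(k)=-(-1)^{k(k+1)/2}$ is forced so that the prefactors $(-1)^{i(j-1)}$ in~\eqref{eq:gen_jacobi} combine with the $(-1)^{i+j}$ from the Cartan identity to cancel. When exactly one input has negative degree, every $l_k$ with $k\geq 2$ containing that input vanishes by definition, so~\eqref{eq:gen_jacobi} collapses to a graded Leibniz-type relation involving only $l_1=d$ paired with one multibracket, which is an immediate consequence of~\eqref{eq:Lie}. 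When two or more inputs have negative degree, both sides of~\eqref{eq:gen_jacobi} vanish identically by the degree condition in the definition of $l_k$. The main difficulty throughout is sign bookkeeping: reconciling the Koszul signs $\epsilon(\sigma)$, the unshuffle signs $(-1)^{\sigma}$, and the prefactor $(-1)^{i(j-1)}$ in~\eqref{eq:gen_jacobi} with the $(-1)^{i+j}$ from the Cartan induction and the $\xi(k)$ prefactors. A cleaner way to handle this is to perform the entire verification in the $L_\infty[1]$ picture provided by Theorem~\ref{thm:coalgebraLinfty}, where only Koszul signs appear, and then translate back via the decalage isomorphism, which absorbs the permutation signs into the degree shift.
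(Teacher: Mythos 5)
The paper offers no proof of this theorem at all: it simply cites Rogers and Zambon, so you are supplying an argument where the text gives only a reference. Your strategy is the standard one from those sources — check well-definedness and skew-symmetry, then reduce the generalized Jacobi identity \eqref{eq:gen_jacobi} by degree counting (with all inputs in $L^{0}$, only the terms $(i,j)=(2,m-1)$ and $(i,j)=(m,1)$ survive) to a single Cartan-calculus identity for $d\,\iota(u_{1}\wedge\cdots\wedge u_{k})\omega$ — and that reduction, together with the disposal of the mixed-degree cases via the vanishing conditions on the $l_{k}$, is sound.

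However, your key identity is wrong as stated: the correct formula carries an overall factor $(-1)^{k}$,
\begin{equation*}
d\,\iota(u_{1}\wedge\cdots\wedge u_{k})\omega=(-1)^{k}\sum_{1\leq i<j\leq k}(-1)^{i+j}\,\iota\bigl([u_{i},u_{j}]\wedge u_{1}\wedge\cdots\wedge\widehat{u_{i}}\wedge\cdots\wedge\widehat{u_{j}}\wedge\cdots\wedge u_{k}\bigr)\omega\, ,
\end{equation*}
which is exactly the identity the paper itself invokes later, in the proof of Theorem \ref{thm:morphismproduct}. Your version agrees only for even $k$: already for $k=3$ a direct computation with the conventions \eqref{eq:interior}, \eqref{eq:Lie}, \eqref{eq:commutator} gives $d\,\iota(u_{1}\wedge u_{2}\wedge u_{3})\omega=\iota([u_{1},u_{2}]\wedge u_{3})\omega-\iota([u_{1},u_{3}]\wedge u_{2})\omega+\iota([u_{2},u_{3}]\wedge u_{1})\omega$, the negative of your right-hand side. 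This is not cosmetic: in arity $m$ the all-degree-zero Jacobi identity reduces to $\xi(m)\,d\,\iota(u_{1}\wedge\cdots\wedge u_{m})\omega=\xi(m-1)\sum_{i<j}(-1)^{i+j}\iota([u_{i},u_{j}]\wedge\cdots)\omega$, and $\xi(m-1)/\xi(m)=(-1)^{m}$, so the missing $(-1)^{k}$ is precisely the sign your verification needs; with your identity the check fails for every odd $m\geq 3$. A smaller inaccuracy: in the case of exactly one negative-degree input the identity does not collapse to a Leibniz-type relation — every term vanishes outright, the only non-obvious one being $l_{m}(l_{1}(\beta),\ldots)$ when $\deg\beta=-1$, which is zero because the exact form $d\beta$ has vanishing Hamiltonian vector field by non-degeneracy of $\omega$.
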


\noindent
We can extend definition \ref{def:bracket_def} and define a $k$-ary bracket in $L\left(\mathcal{M,\omega}\right)$ as follows

\begin{equation}
\label{eq:karybracket}
\left\{x_{1},\hdots,x_{k}\right\} = l_{k}\left( x_{1},\hdots,x_{k}\right)\, ,\qquad x_{1},\hdots,x_{k}\in L\left(\mathcal{M,\omega}\right)\, .
\end{equation}

\noindent
Please notice that in the $n=1$ case, the underlying complex is simply the vector space of Hamiltonian functions $C^{\infty}\left(\mathcal{M}\right)$. The only non-zero bracket is therefore $l_{2}=\left\{\cdot,\cdot\right\}$, which is simply a Lie bracket. We thus recover the Lie algebra which underlies the usual Poisson algebra that can be constructed for symplectic manifold. As explained in section \ref{sec:symplectic}, in that case there is a well-defined surjective Lie algebra morphism

\begin{equation}
\pi : C^{\infty}\left(\mathcal{M}\right) \epi \mathfrak{X}_{\mathrm{Ham}}\left(\mathcal{M}\right)\, ,
\end{equation}

\noindent
which send a function to its (unique) Hamiltonian vector field. If $\mathcal{M}$ is connected, it can be shown that $\pi$ fits in the following short exact sequence

\begin{equation} 
\label{eq:KS_extension}
0 \to \mathbb{R} \to C^{\infty}\left(\mathcal{M}\right) \xrightarrow{\pi} \mathfrak{X}_{\mathrm{Ham}}\left(\mathcal{M}\right) \to 0\, .
\end{equation}

\noindent
Equation (\ref{eq:KS_extension}) is the so-called Kostant-Souriau central extension, see references \cite{Kostant:1970,Souriau:1967}. The Kostant-Souriau central extension characterizes,up to isomorphism, the Lie algebra of $C^{\infty}\left(\mathcal{M}\right)$ as follows: it is the central extension, which can be shown to be unique, given by the symplectic form, evaluated at $p \in \mathcal{M}$. The higher analog of the central extension (\ref{eq:KS_extension}) is given by the cochain map

\begin{equation}
\label{eq:pi_map}
\pi : L\left(\mathcal{M},\omega\right) \epi \mathfrak{X}_{\mathrm{Ham}}\left(\mathcal{M}\right)\, , 
\end{equation}

\noindent
which is trivial en all degrees but zero. In degree zero it assigns to every Hamiltonian form $\alpha$ its unique Hamiltonian vector field. The map $\pi$ fits hence in the following short exact sequence

\begin{equation} 
\label{eq:KS_extensionmulti}
0 \to \tilde{\Omega} \to L\left(\mathcal{M}\right) \xto{\pi} \Xham\left(\mathcal{M}\right) \to 0\, .
\end{equation}

\noindent
Here $\tilde{\Omega}$ stands for the cocomplex 

\begin{equation}
\tilde{\Omega} = C^{\infty}\left(\mathcal{M}\right)\to \Omega^{1}\left(\mathcal{M}\right)\to\cdots\to\Omega^{n-1}_{\mathrm{cl}}\left(\mathcal{M}\right)\, ,
\end{equation}

\noindent
where $\Omega^{n-1}_{\mathrm{cl}}\left(\mathcal{M}\right)$ is the set of closed $(n-1)$-forms in $\mathcal{M}$ and the coboundary operator is the de Rahm differential.


\subsection{Homotopy moment maps}
\label{sec:homotopymoment}


As usual $G$ denotes a Lie group and $\mathfrak{g}$ its Lie algebra. We will consider only actions $\psi$ from the left. That is, $G$ acts on $\Omega^{\bullet}\left(\mathcal{M}\right)$ from the left through the inverse pullback

\begin{equation}
g \cdot \omega \mapsto \psi^{\ast}_{g^{-1}} \omega\, ,
\end{equation}

\noindent
where $\psi_{g}$ is the diffeomorphism that corresponds to $g$. The corresponding infinitesimal action of the Lie algebra $\mathfrak{g}$ is denoted by the map

\begin{equation} 
\label{eq:lie_alg_action}
u_{-} : \mathfrak{g} \to \mathfrak{X}\left(\mathcal{M}\right), \quad y \mapsto u_{y}\, ,
\end{equation}

\noindent
where

\begin{equation}
u_{y}|_{p} = \frac{d}{dt} \exp(-ty) \cdot p|_{t=0} \qquad
\forall p \in \mathcal{M}\, .
\end{equation}

\noindent
We name $u_{-}$ as the \emph{fundamental vector field} which is associated to the $G$ action.

In the context of symplectic geometry, we can express a moment map $\mathcal{M} \to \mathfrak{g}^{*}$ also as
a \emph{comoment map}, namely a Lie algebra morphism $\mathfrak{g} \to C^{\infty}\left(\mathcal{M}\right)$; see section \ref{sec:symplectic} for more details. We will introduce in this section, closely following the seminal paper \cite{2013arXiv1304.2051F}, the natural analog in multisymplectic geometry of the comoment map in symplectic geometry. It is the so-called \emph{homotopy moment map}.

Let us just point out that a moment map $\mu$ is an equivariant  $\mathfrak{g}^{*}$-valued smooth function on a symplectic manifold $\left(\mathcal{M},\omega\right)$ which equipped with an action, namely $\mu\in \left(\mathfrak{g}^{*}\otimes \mathcal{C}^{\infty} \left(\mathcal{M}\right)\right)^{G}$. $\mu$ has to satisfy a particular condition respecto to $\omega$. This notion can be also expressed as a comoment map, which is nothing but a Lie algebra map $\mu : \mathfrak{g}\to(\mathcal{C}^{\infty}\left(\mathcal{M}\right),\{,\})$ from the Lie algebra of $G$, to the Poisson algebra that can be associated to the symplectic form.

\begin{definition} 
\label{def:main_def}
Let $G$ be a Lie group with Lie algebra $\mathfrak{g}$. Let $\left(\mathcal{M},\omega\right)$ be an $n$-plectic manifold which is equipped with a $G$-action preserving $\omega$, and such that the  $\mathfrak{g}$-action $x \mapsto u_{x}$ is through Hamiltonian vector fields. A \emph{homotopy moment map} is the following lift 

\begin{center}
\begin{tikzpicture}
\label{diag:homotopymoment}
  \matrix (m) [matrix of math nodes,row sep=8em,column sep=9em,minimum width=2em]
  {
 & L\left(\mathcal{M},\omega\right) \\
\mathfrak{g} & \mathfrak{X}_{\mathrm{Ham}}\left(\mathcal{M}\right) \\};
  \path[-stealth]
    (m-1-2) edge node [right] {$\pi$} (m-2-2)
    (m-2-1) edge node [above] {$v_{-}$} (m-2-2)
    (m-2-1) edge node [right] {} (m-1-2);
\end{tikzpicture}
\end{center}

\noindent
of the Lie algebra morphism $u_{-}$ (\ref{eq:lie_alg_action}) by means of the
$L_{\infty}$-morphism $\pi$ \ref{eq:pi_map} in the category of $L_{\infty}$-algebras. This lift corresponds to an $L_{\infty}$-morphism 

\begin{equation}
\left( f_{k}\right)_{k\geq 1} :  \mathfrak{g} \to L_{\infty}\left(\mathcal{M},\omega\right)\, ,
\end{equation}

\noindent
that satisfies

\begin{equation}
\label{eq:condicionham}
-\iota_{u_{y}} \omega=d\left(f_{1}(y)\right)\qquad \text{for all $y\in \mathfrak{g}$}\, .
\end{equation}
\end{definition}

\noindent
Please notice that the condition $-\iota_{u_{y}} \omega=d(f_{1}(y))$ implies that $u_{y}$ is the unique Hamiltonian vector field for $f_{1}(y) \in \Omega^{n-1}_{\mathrm{Ham}}\left(\mathcal{M}\right)$. In addition, using proposition \ref{prop:Lie_alg_P_cor}, we can rewrite the conditions on the components $\left(f_{k}\right)_{k\geq 1} : \g^{\otimes k} \to L\left(\mathcal{M},\omega\right)$ of the $L_{\infty}$-morphism as follows

\begin{eqnarray} 
\label{eq:main_eq_1}
\sum_{1 \leq i < j \leq k}
(-1)^{i+j+1}f_{k-1}\left(\left[y_{i},y_{j}\right],y_{1},\ldots,\widehat{y_{i}},\ldots,\widehat{y_{j}},\ldots,y_{k}\right)\\
=df_{k}\left((y_{\mathrm{1}},\hdots,y_{k}\right) + \xi(k)\iota(u_{1}\wedge \cdots \wedge u_{k})\omega\, ,
\end{eqnarray}

\noindent
for $2 \leq k  \leq n$ plus

\begin{eqnarray} 
\label{eq:main_eq_2}
\sum_{1 \leq i < j \leq n+1}
(-1)^{i+j+\mathrm{1}}f_{n}\left(\left[ y_{i},y_{j}\right],y_{1},\ldots,\widehat{y_{i}},\hdots,\widehat{y_{j}},\hdots,y_{n+1}\right)
=\xi(n+\mathrm{1})\iota\left(u_{1}\wedge \cdots \wedge u_{{n+1}}\right)\omega\, .
\end{eqnarray}

\noindent
Here $u_{i}$ stands for the vector field associated to $y_{i}$ via the $\mathfrak{g}$-action. Please notice that the theorem \ref{thm:ham-infty} implies in particular that $L_{\infty}\left(\mathcal{M},\omega\right)$ satisfies \ref{eq:property}.

Please notice also that proposition \ref{prop:brac_prop} implies in particular that $u_{\left[ x , y \right]}= \left[ u_{x},u_{y} \right]$ is a Hamiltonian vector field for 

\begin{equation}
\left\{ f_{1}(x), f_{1}(y)\right\}=l_{2}\left( f_{\mathrm{1}}(x), f_{1}(y)\right)\, .
\end{equation}

\noindent
In general, the map $f_{\mathrm{1}} : \mathfrak{g} \to \Omega^{n-1}_{\mathrm{Ham}}\left(\mathcal{M}\right)$ will not preserve the bracket on $\mathfrak{g}$, \emph{i.e.}, we will have

\begin{equation}
f_{\mathrm{1}}\left(\left[ x, y \right]\right) \neq \left\{ f_{1}(x) , f_{1}(y)\right\}\, .
\end{equation}

\noindent
This is a nice property that should be expected, since the Lie bracket of $\mathfrak{g}$ satisfies the Jacobi identity but $\left\{\cdot\cdot\right\}$ does not.

\begin{definition}
\label{def:hamiltonianaction} Let $\left(\mathcal{M},\omega\right)$ be an $n$-plectic manifold. The action of a Lie group $G$ on $\left(\mathcal{M},\omega\right)$ is said to be Hamiltonian if an homotopy moment map for such action exists.
\end{definition}


\section{Multisymplectic diffeomorphisms and $n$-algebra morphisms}
\label{sec:multidiff}


In this section we are going to study the relation between strict morphisms of $L_{\infty}$-algebras and multisymplectic diffeomorphisms of the corresponding multisimplectic manifolds. That is, we want to know under which conditions, if any, we are able to conclude that a strict morphism of Lie $n$-algebras $\left\{ L\left(M_{a},\omega_{a}\right),l^{a}_{k}\right\}\, , a=1,2\, ,~k=1,\cdots,n+1\, ,$ \emph{induces} a multisymplectic diffeomorphism between the corresponding $n$-plectic manifolds $\left(M_{a},\omega_{a}\right)$. We know that in the symplectic case the answer is positive: two symplectic manifolds with corresponding isomorphic Poisson algebras are symplectomorphic. We will see that in the $n$-plectic case the answer is also positive, at least for a special class of $n$-plectic manifolds, those which are \emph{locally homogeneous} with respect to the multisymplectic form. Hence, at least in those cases,  the $L_{\infty}$-algebra constructed on a multisymplectic manifold is powerful enough to contain important information about the manifold itself and its differential structure.

Let $\phi : L\left(M_{2},\omega_{2}\right) \to L\left(M_{1},\omega_{1}\right)$ be an strict Lie $n$-algebra morphism and let us write $\phi = \left(\phi_{n-1},\dots,\phi_{0}\right)$, where $\phi_{i} :  L_{i}\left(M_{2},\omega_{2}\right) \to L_{i}\left(M_{1},\omega_{1}\right)\, , ~ i = 1-n,\dots,0$\footnote{For the precise definition of strict morphism of $L_{\infty}$-algebras see \ref{sec:linfinitymorphisms}.}.

In order to relate strict Lie $n$-algebra morphisms and multisymplectic diffeomorphisms, we need first the existence of $\phi$ to imply the existence of a diffeomorphism $F:M_{1}\to M_{2}$, which then must be checked to be a multisymplectic diffeomorphism. This is easily achieved by making use of the following lemma

\lemma{\label{lem:diff} Let $\mathcal{M}_{a}\, , a=1,2\, ,$ be differentiable manifolds and $\psi : \left( C^{\infty}\left(\mathcal{M}_{2}\right), \cdot\right)\to \left( C^{\infty}\left(\mathcal{M}_{1}\right), \cdot\right)$ an algebra morphism, where $\cdot$ denotes the usual multiplication of functions. Then $\psi = F^{\ast}$, where $F : \mathcal{M}_{1}\to\mathcal{M}_{2}$ is a smooth map. In addition, if $\psi$ is an algebra isomorphism then $F$ is a diffeomorphism.}

\proof{See theorem 4.2.36 in reference \cite{tensoranalisis}.}

\noindent
\newline Hence, assuming that $\phi_{1-n}: C^{\infty}\left(\mathcal{M}_{2}\right)\to  C^{\infty}\left(\mathcal{M}_{1}\right)$ is an algebra isomorphism from $\left\{ C^{\infty}\left(\mathcal{M}_{2}\right), \cdot\right\}$ to $\left\{ C^{\infty}\left(\mathcal{M}_{1}\right), \cdot\right\}$, we can conclude the existence of a diffeomorphism $F$ from $\mathcal{M}_{1}$ such that $\mathcal{M}_{2}$ and 

\begin{equation}
\label{eq:phiF}
\phi_{n-1} = F^{\ast}\, .
\end{equation}

\noindent
Since $\phi$ is an strict Lie $n$-algebra morphism, it preserves the maps $l^{a}_{k}\, , ~k = 1,\dots,n+1$ of the corresponding Lie $n$-algebra\footnote{Notice that $l^{a}_{1}=d_{a}$.}

\begin{equation}
d_{2}\circ\phi = \phi\circ d_{1}
\end{equation}

\begin{equation}
\phi_{2-k}\circ l^{2}_k\left(\alpha_{1},\cdots,\alpha_{k}\right) = l^{1}_k\left(\phi_{0}\circ \alpha_{1},\dots,\phi_{0}\circ \alpha_{k}\right)\, , \qquad \forall~~\alpha_{1}\, ,\dots,\alpha_{k}\in \Omega^{n-1}_{\mathrm{Ham}}\left(\mathcal{M}_{2}\right)\, ,\qquad k = 2,\dots , n+1 \, .
\end{equation}

\noindent
Using now the definition (\ref{eq:karybracket}) of the $L_{\infty}$-algebra maps $l_{k}$, as well as (\ref{eq:phiF}), we obtain

\begin{equation}
F^{\ast} \left\{ \alpha^{2}_{1},\cdots, \alpha^{2}_{n+1}\right\}_{2} = \left\{ \phi_{0}\circ \alpha^{2}_{1},\cdots, \phi_{0}\circ \alpha^{2}_{n+1}\right\}_{1}\, , \qquad \forall~~\alpha^{2}_{1}\, ,\dots,\alpha^{2}_{n+1}\in \Omega^{n-1}_{\mathrm{Ham}}\left(\mathcal{M}_{2}\right)\, .
\end{equation} 

\noindent
Further assuming that $\phi_{i}=F^{\ast}\, , ~ i = 1-n,\dots,0\, ,$ it can be proven that the initial set-up consisting of two $n$-plectic manifolds and a strict Lie-$n$ algebra morphism $\phi$ is equivalent, in a precise sense to be specified in a moment, to considering a unique manifold $\mathcal{M}$ equipped with two $n$-plectic structures $\omega_{1}$ and $\omega_{2}$ such as $l^{1}_{k}=l^{2}_{k}\, , ~ k = 1,\cdots,n+1$. Notice that the condition $\phi_{0}=F^{\ast}$ is non-trivial, since for arbitrary diffeomorphisms we would have

\begin{equation}
F^{\ast}: \Omega^{n-1}_{\mathrm{Ham}}\left(\mathcal{M}_{2}\right)\to \Omega^{n-1}\left(\mathcal{M}_{1}\right)\, ,
\end{equation}

\noindent
and we are requiring

\begin{equation}
F^{\ast}: \Omega^{n-1}_{\mathrm{Ham}}\left(\mathcal{M}_{2}\right)\to \Omega^{n-1}_{\mathrm{Ham}}\left(\mathcal{M}_{1}\right)\, .
\end{equation}

\noindent
We will assume then that the Lie-$n$ algebra morphism is given by $\phi = F^{\ast}$, where $F:\mathcal{M}_{1}\to\mathcal{M}_{2}$ is a diffeomorphism, and conclude then that $F$ must be a multisymplectomorphism by studying the equivalent case of a unique manifold $\mathcal{M}_{1}$ equipped with two multisimplectic structures $\left(\omega_{1}, \omega_{2}\right)$, such that the corresponding Lie-$n$ algebras are equal. Let us first prove the equivalence of both cases.

\prop{\label{prop:equivalence} Let $\left(\mathcal{M}_{a},\omega_{a}\right)\, , ~a=1,2\, ,$ be $n$-plectic manifolds, $\left\{ L\left(\mathcal{M}_{a},\omega_{a}\right),l^{a}_{k}\right\}$ denote the corresponding Lie $n$-algebras and $\phi: \left\{ L\left(\mathcal{M}_{2},\omega_{2}\right),l^{2}_{k}\right\}\to \left\{ L\left(\mathcal{M}_{1},\omega_{1}\right),l^{1}_{k}\right\}$ a strict Lie $n$-algebra morphism such that

\begin{equation}
\phi_{i} = F^{\ast} :  L_{i}\left(M_{2},\omega_{2}\right) \to L_{i}\left(M_{1},\omega_{1}\right)\, , ~ i = 1-n,\dots,0\, ,
\end{equation}

\noindent
where $F:\mathcal{M}_{1}\to\mathcal{M}_{2}$ is a diffeomorphism. Then, $F$ is a multisymplectic diffeomorphism if and only if $ \left\{ L\left(\mathcal{M}_{1},\omega_{1}\right),l^{1}_{k}\right\}= \left\{ L\left(\mathcal{M}_{1},\tilde{\omega}_{1} \equiv F^{\ast}\omega_{2}\right),\tilde{l}^{1}_{k}\right\}$ implies $\omega_{1}=\tilde{\omega}_{1}.$
}

\proof{If $F:\mathcal{M}_{1}\to\mathcal{M}_{2}$ is a multisymplectomorphism, then $F^{\ast}\omega_{2} = \omega_{1}$ and therefore $ \left\{ L\left(\mathcal{M}_{1},\omega_{1}\right),l^{1}_{k}\right\}= \left\{ L\left(\mathcal{M}_{1},\tilde{\omega}_{1}\right),\tilde{l}^{1}_{k}\right\}$ since $\tilde{\omega}_{1}=\omega_{1}$, which in turn implies $\tilde{l}^{1}_{k}=l^{1}_{k}\, ,\,\,k =1,\cdots,n+1$.

On the other hand, if $ \left\{ L\left(\mathcal{M}_{1},\omega_{1}\right),l^{1}_{k}\right\}= \left\{ L\left(\mathcal{M}_{1},\tilde{\omega}_{1} \equiv F^{\ast}\omega_{2}\right),\tilde{l}^{1}_{k}\right\}$ implies $\omega_{1}=\tilde{\omega}_{1}.$, then $\omega_{1} = F^{\ast}\omega_{2}$ and therefore $F$ is a multisymplectomorphism.\qed}

\noindent
In other words, proposition \ref{prop:equivalence} simply states that the following diagram of strict-isomorphisms of $L_{\infty}$-algebras commutes

\begin{center}
\begin{tikzpicture}
\label{diag:commutativemorphism}
  \matrix (m) [matrix of math nodes,row sep=8em,column sep=9em,minimum width=2em]
  {& L\left(\mathcal{M}_{1},F^{\ast}\omega_{2}\right) \\
L\left(\mathcal{M}_{1},\omega_{1}\right) & L\left(\mathcal{M}_{2},\omega_{2}\right) \\};
  \path[-stealth]
    (m-1-2) edge node [above] {Id} (m-2-1)
    (m-2-2) edge node [above] {$\phi$} (m-2-1)
    (m-2-2) edge node [right] {$F^{\ast}$} (m-1-2);
\end{tikzpicture}
\end{center}

\noindent
Therefore, we will consider the equivalent situation of a unique manifold $\mathcal{M}$ equipped with two multisymplectic structures $\omega_{1}$ and $\omega_{2}$. Before proving the main result of this section, namely theorem (\ref{thm:multisymplecticdiff}), it is necessary to introduce the concept of \emph{locally homogeneous manifold} and the lemma (\ref{lem:span}) \cite{1998math......5040E}.

\definition{\label{def:lochomogeneous} Let $\mathcal{M}$ be a differentiable manifold. Consider $p\in\mathcal{M}$ and a compact set $K\in\mathcal{M}$ such that $p\in K^{\circ}$\footnote{$K^{\circ}$ denotes the interior of $K$.}. A local Liouville or local Euler-like vector field at $p$, with respect to $K$, is a vector field $\Delta^{p}$ on $\mathcal{M}$ such that $\mathrm{supp}\Delta^{p} \equiv \overline{\left\{ q\in\mathcal{M} | \Delta^{p}(q)\neq 0\right\}} \subset K$, and there exists a diffeomorphism $\phi: \left(\mathrm{supp}\Delta^{p}\right)^{\circ} \to \mathbb{R}^{n}$ such that $\phi_{\ast}\Delta^{p} = \Delta$, where $\Delta = x^{i}\frac{\partial}{\partial x^{i}}$ is the standard Liouville or dilation vector field in $\mathbb{R}^{n}$.}

\definition{\label{def:homform} A differential form $\omega\in\Gamma\left(\Lambda^{(n+1)}\left(T^{\ast}\mathcal{M}\right)\right)$ is said to be \emph{locally homogeneous} at $p\in\mathcal{M}$ if, for every open set $U$ containing $p$, there exists a local Euler-like vector field $\Delta^{p}$ at $p$ with respect to a compact set $K\subset U$ such that

\begin{equation}
\mathcal{L}_{\Delta^{p}} \omega = f\omega\, ,\qquad f\in C^{\infty}\left(\mathcal{M}\right)\, .
\end{equation}

\noindent
The form is said to be locally homogeneous if it is locally homogeneous for all $p\in\mathcal{M}$.
}

\noindent
Obviously, out of $\mathrm{supp}\Delta^{p}$ , the function $f$ vanishes. A couple $\left(\mathcal{M},\omega\right)$ where $\omega$ is locally homogeneous is called a locally homogeneous manifold. As examples of homogeneous $n$-plectic manifolds we can find symplectic, manifolds and multicotangent bundles and in fact any oriented manifold equipped with its volume form. 

\prop{\label{prop:symplectichomogeneous} Every 1-plectic manifold $\left(\mathcal{M},\omega\right)$ is locally homogeneous with $f=2$. }

\proof{}

The following lemmas\footnote{Lemma (\ref{lem:span}) is actually is a small generalization of lemma 4.5 in \cite{1998math......5040E}.}  will play an important role in the proof of theorem (\ref{thm:multisymplecticdiff})

\lemma{\label{lem:span} Let $\left(\mathcal{M},\omega\right)$ be a locally homogeneous $n$-plectic manifold. Then, the family of hamiltonian vector fields span the tangent bundle of $\mathcal{M}$. That is

\begin{equation}
T_{p}\mathcal{M} = \mathrm{span}\left\{v_{p}\, |\, v\in \Gamma\left(T\mathcal{M}\right)\, ,\,\, i_{v}\omega = d \alpha_{v}\, ,\, \alpha_{v}\in \Omega^{(n-1)}_{\mathrm{Ham}}(\mathcal{M})\right\}\, .
\end{equation}
}

\proof{Let $\left(\mathcal{M},\omega\right)$ be a locally homogeneous $n$-plectic manifold, and let $v_{p}\in T_{p}\mathcal{M}$ be any vector at $p\in\mathcal{M}$. Let $U$ be a contractible open neighborhood of $p$, which can be shrink in order to be contained in a coordinate chart $\mathcal{U}_{\alpha}$, with coordinates $\phi_{\alpha}$. In lemma 4.5 of reference \cite{1998math......5040E}, it was proven the existence of a vector field $v_{U}$ on $U$, such that       $\mathrm{supp}\, v_{U}\subset U$ is compact, $v_{U}|_{p} = v_{p}$ and

\begin{equation}
d\iota_{v_{U}}\omega = 0 \, ,
\end{equation}

\noindent
that is, $\iota_{v_{U}}\omega $ is closed. $v_{U}$ can be extended trivially to all $\mathcal{M}$ by defining a vector field $v\in\Gamma\left(T\mathcal{M}\right)$ as follows

\begin{eqnarray}
v|_{p} &=& v_{U}|_{p}\, , \, \, p\in \mathrm{supp}\, v_{U}\, ; \qquad v|_{p} = 0\, , \, \, p\notin \mathrm{supp}\, v_{U}\, ,
\end{eqnarray}

\noindent
We will prove now that $v$ is a Hamiltonian vector field on $\mathcal{M}$. $\iota_{v_{U}}\omega$ is a closed $n$-form with compact support in $U$, that is $\left[\iota_{v_{U}}\right]\omega\in H^{n}_{C}\left(U\right)$. In addition, we can choose $U\subset \mathcal{U}_{\alpha}$ such that 

\begin{equation}
U\simeq \mathbb{R}^{d}\, , \qquad d = \mathrm{dim}\, \mathcal{M}\, . 
\end{equation}

\noindent
Therefore $H^{n}_{C}\left(U\right)\simeq H^{n}_{C}\left(\mathbb{R}^{d}\right)$ where $H^{n}_{C}\left(\mathbb{R}^{d}\right)$ denotes the $n$-th compactly supported cohomology group of $\mathbb{R}^{d}$. Noticing that $n<d$ we conclude that

\begin{equation}
H^{n}_{C}\left(U\right)\simeq H^{n}_{C}\left(\mathbb{R}^{d}\right)\simeq \left\{ 0\right\}\, . 
\end{equation}

\noindent
Hence, there exist a $(n-1)$-form $\alpha_{U}\in\Omega^{n-1}\left(U\right)$ with compact support contained in $U$ such that 

\begin{equation}
\iota_{v_{U}}\omega = d\alpha_{U}\, .
\end{equation}

\noindent
Now, extending $\alpha_{U}$ trivially to a $(n-1)$-form $\alpha\in \Omega^{n-1}\left(\mathcal{M}\right)$ as follows

\begin{equation}
\alpha |_{p} = \alpha_{U}|_{p}\, , \, \, p\in \mathrm{supp}\, \alpha_{U}\, ; \qquad \alpha |_{p} = 0\, , \, \, p\notin \mathrm{supp}\, \alpha_{U}\, ,
\end{equation}

\noindent
we see that

\begin{equation}
\iota_{v}\omega = d\alpha\, .
\end{equation}

\noindent
Therefore, $v$ is a Hamiltonian vector field in $\mathcal{M}$ that can be build as to give any vector $v|_{p} = v_{p}$ at $p\in\mathcal{M}$. We conclude then that $T_{p}\mathcal{M}$ is generated by Hamiltonian vector fields on $\mathcal{M}$ evaluated at $p\in\mathcal{M}$. \qed
 
}

\noindent

\lemma{\label{lem:equalhamv} Let $\mathcal{M}$ be a multisymplectic manifold equipped with two $n$-plectic structures $\omega_{1}$ and $\omega_{2}$ such that the corresponding Lie-$n$ algebras are equal, namely $L\left(\mathcal{M},\omega_{2}\right) = L\left(\mathcal{M},\omega_{1}\right)$. Let us assume that at least one of the $n$-plectic structures, say $\omega_{1}$, is locally homogeneous. Then

\begin{equation}
v^{1}_{\alpha} = v^{2}_{\alpha}\, ,\qquad\forall\, \, \alpha\in\Omega^{(n-1)}_{\mathrm{Ham}}\left(\mathcal{M}\right)\, ,
\end{equation}

\noindent
where $v^{a}_{\alpha}\, , \, a=1,2$ is the Hamiltonian vector field of $\alpha$ respect to $\omega_{a}$, that is

\begin{equation}
\label{eq:hamvectorfielddef}
d\alpha =\iota_{v^{a}_{\alpha}}\omega_{a}\, ,\qquad a= 1, 2\, .
\end{equation}

}

\proof{Let $\alpha\in\Omega^{(n-1)}_{\mathrm{Ham}}\left(\mathcal{M}\right)$ and let $v^{a}_{\alpha}$ the Hamiltonian vector of $\alpha$ respect to $\omega_{a}$. Since by assumption $L\left(\mathcal{M},\omega_{2}\right) = L\left(\mathcal{M},\omega_{1}\right)$, we can write

\begin{equation}
l^{1}_{2} \left(\alpha, \beta\right) = l^{2}_{2} \left(\alpha, \beta\right)\, , \qquad\forall\,\, \alpha, \beta \in \Omega^{(n-1)}_{\mathrm{Ham}}\left(\mathcal{M}\right)\, ,
\end{equation}

\noindent
which, in turn, implies

\begin{equation}
\label{eq:hamvectorigual}
\omega_{1} \left(v^{1}_{\alpha}, v^{1}_{\beta},\dots\right) = \omega_{2} \left(v^{2}_{\alpha}, v^{2}_{\beta},\dots\right)\, , \qquad\forall\,\, \alpha, \beta \in \Omega^{(n-1)}_{\mathrm{Ham}}\left(\mathcal{M}\right)\, ,
\end{equation}

\noindent
where $v^{a}_{\alpha}$ is the Hamiltonian vector field of $\alpha$ associated to $\omega_{a}$. Using now that

\begin{equation}
d\alpha = -\iota_{v^{1}_{\alpha}}\omega_{1} = -\iota_{v^{2}_{\alpha}}\omega_{2}\, ,\qquad \forall \alpha\in\Omega^{(n-1)}_{\mathrm{Ham}}\left(\mathcal{M}\right)\, ,
\end{equation}

\noindent
we can rewrite equation (\ref{eq:hamvectorigual}) as follows

\begin{equation}
\label{eq:hamvectorigualII}
\omega_{1} \left(v^{1}_{\alpha}, v^{1}_{\beta},\dots\right) = \omega_{1} \left(v^{1}_{\alpha}, v^{2}_{\beta},\dots\right)\, , \qquad\forall\,\, \alpha, \beta \in \Omega^{(n-1)}_{\mathrm{Ham}}\left(\mathcal{M}\right)\, ,
\end{equation}

\noindent
The non-degeneracy of $\omega_{1}$ together with lemma \ref{lem:span} finally implies

\begin{equation}
v^{1}_{\beta} = v^{2}_{\beta}\, , \qquad\forall\,\, \beta \in \Omega^{(n-1)}_{\mathrm{Ham}}\left(\mathcal{M}\right)\, .
\end{equation}

\qed}

\noindent
\thm{\label{thm:multisymplecticdiff} Let $\mathcal{M}$ be a differentiable manifold equiped with two multisimplectic structures $\omega_{1}$ and $\omega_{2}$, such that at least one of them is locally homogeneous. Then $L\left(\mathcal{M},\omega_{2}\right) = L\left(\mathcal{M},\omega_{1}\right)$ if and only if $\omega_{1}=\omega_{2}$.}  

\proof{If $\omega_{1}=\omega_{2}$ it is obvious that $L\left(\mathcal{M},\omega_{2}\right) = L\left(\mathcal{M},\omega_{1}\right)$. On the other hand, let us assume that $L\left(\mathcal{M},\omega_{2}\right) = L\left(\mathcal{M},\omega_{1}\right)$. In particular, the underlying complex and the multilinear brackets constructed from $\omega_{1}$ and $\omega_{2}$ must be equal. We can write then

\begin{equation}
l^{1}_{(n+1)}\left(\alpha_{1},\dots ,\alpha_{(n+1)}\right) = l^{2}_{(n+1)}\left(\alpha_{1},\dots ,\alpha_{(n+1)}\right)\, ,\qquad\forall\,\ \alpha_{1},\dots ,\alpha_{(n+1)}\in\Omega^{(n-1)}_{\mathrm{Ham}}\left(\mathcal{M}\right)\, ,
\end{equation}

\noindent
and therefore

\begin{equation}
\label{eq:equalforms}
i_{v_{\alpha_1}\wedge\cdots\wedge v_{\alpha_{(n+1)}}}\omega_{1} = i_{v_{\alpha_1}\wedge\cdots\wedge v_{\alpha_{(n+1)}}}\omega_{2}\, ,\qquad\forall\,\, v_{\alpha_1}, \dots ,v_{\alpha_{(n+1)}}\in\mathfrak{X}_{\mathrm{Ham}}\left(\mathcal{M}\right)\, ,
\end{equation}

\noindent
where we have used lemma \ref{lem:equalhamv} in order to use the same Hamiltonian vector fields for $\omega_{1}$ and $\omega_{2}$. Evaluating now \ref{eq:equalforms} point-wise we obtain, using lemma \ref{lem:span}

\begin{equation}
i_{v_{1}\wedge\cdots\wedge v_{(n+1)}}\omega_{1}|_{p} = i_{v_{1}\wedge\cdots\wedge v_{(n+1)}}\omega_{2}|_{p}\, ,\qquad\forall\,\, v_{1}|_{p}, \dots ,v_{(n+1)}|_{p}\in T_{p}\left(\mathcal{M}\right)\, .
\end{equation}

\noindent
We conclude hence that $\omega_{1}|_{p}=\omega_{2}|_{p}$ for all $p\in\mathcal{M}$ and therefore $\omega_{1}=\omega_{2}$. \qed
 }

\noindent
\newline From theorem \ref{thm:multisymplecticdiff} and propostion \ref{prop:equivalence} we immediately conclude the final result of this section, namely

\thm{\label{thm:finalmultisymplecticdiff} Let $\left(\mathcal{M}_{a},\omega_{a}\right)\, , ~a=1,2\, ,$ be locally homogeneous multisymplectic manifolds, let $\left\{ L\left(M_{a},\omega_{a}\right),l^{a}_{k}\right\}$ denote the corresponding $L_{\infty}$ algebras and let $\phi: \left\{ L\left(\mathcal{M}_{2},\omega_{2}\right),l^{2}_{k}\right\}\to \left\{ L\left(\mathcal{M}_{1},\omega_{1}\right),l^{1}_{k}\right\}$ an strict $L_{\infty}$-isomorphism such that

\begin{equation}
\phi_{i} = F^{\ast} :  L_{i}\left(\mathcal{M}_{2},\omega_{2}\right) \to L_{i}\left(\mathcal{M}_{1},\omega_{1}\right)\, , ~ i = 1-n,\dots,0\, ,
\end{equation}

\noindent
where $F:\mathcal{M}_{1}\to\mathcal{M}_{2}$ is a diffeomorphism. Then, $F$ is also a multisymplectic diffeomorphism, that is, $F^{\ast}\omega_2 = \omega_1$.}  

\proof{Direct consequence of proposition (\ref{prop:equivalence}) and theorem (\ref{thm:multisymplecticdiff}).\qed}

\noindent


\section{Product manifolds and Lie $n$-algebra morphisms}
\label{sec:productman}


Consider two multisymplectic manifolds $\left(\mathcal{M}_{a}, \omega_{a}\right),\, a=1,2,\,$ where $\omega_{a}$ is an $n_{a}$-plectic structure defined on $\mathcal{M}_{a}$. The goal of this section is, roughly speaking, to study the relation between the $n_{a}$-Lie algebra $L\left(\mathcal{M}_{a},\omega_{a}\right)$ constructed over $\mathcal{M}_{a}$ and the Lie-$n$ algebra $L\left(\mathcal{M},\omega\right)$ constructed over the product manifold, $\mathcal{M}=\mathcal{M}_{1}\times\mathcal{M}_{2}$, where $\omega = \mathrm{pr}^{\ast}_{1}\omega_{1}\wedge\mathrm{pr}^{\ast}_{2}\omega_{2}$,  $n = n_1 + n_2 +1$ and $\mathrm{pr}_{a}: \mathcal{M}\to\mathcal{M}_{a}$ is the canonical projection. 

Finding such a relation is relevant for at least two reasons. First, it help us to understand how $n$-plectic Lie algebras are related to the corresponding multisymplectic manifolds in a deeper way, since it give us information about how they behave when some operation is performed in the manifold, in this case the cartesian product. Secondly, it is relevant in order to construct an homotopy moment map\footnote{See section \ref{sec:homotopymoment}.} for the product manifold, assuming the homotopy moment maps for $(\mathcal{M}_{1},\omega_{1})$ and $(\mathcal{M}_{2},\omega_{2})$ exist. Indeed, let $G_{a}$ be a Lie group with Lie algebra $\mathfrak{g}_{a}$. Let $(\mathcal{M}_{a},\omega_{a})$ be a $n_{a}$-plectic manifold equipped with a $G_{a}$ action which preserves $\omega_{a}$ and such that the infinitesimal $\mathfrak{g}_{a}$ action is via Hamiltonian fields. Let us assume that the corresponding homotopy moment maps exist and are given by $f_{a}:\mathfrak{g}_{a}\to L\left(\mathcal{M}_{a},\omega_{a}\right)$. In that case, if $\mathrm{H}: L\left(\mathcal{M}_{1},\omega_{1}\right)\oplus L\left(\mathcal{M}_{2},\omega_{2}\right)\to L\left(\mathcal{M},\omega\right)$ is a $L_{\infty}$-morphism, then the composition of $f_{1}\oplus f_{2}$ and $\mathrm{H}$ is a very reasonable homotopy moment map candidate for the product manifold $G_{1}\times G_{2}\circlearrowleft\left(\mathcal{M},\omega\right)$. To obtain $\mathrm{H}$ by brute force seems to be a very involved task to perform in general. It will turn out to be easier to directly construct an $L_{\infty}$-morphism $F$ from $\mathfrak{g}_{1}\oplus\mathfrak{g}_{2}$ to $\left(\mathcal{M}_{1}\times\mathcal{M}_{2},\omega\right)$, making use of $f_{a}$, which can also be used to make an educated guess for $\mathrm{H}$, as it is illustrated by the following diagram

\begin{center}
\begin{tikzpicture}
\label{diag:productmoment}
  \matrix (m) [matrix of math nodes,row sep=8em,column sep=9em,minimum width=2em]
  {
L\left(\mathcal{M}_{1},\omega_{1}\right)\oplus L\left(\mathcal{M}_{2},\omega_{2}\right) & L\left(\mathcal{M},\omega\right) \\
\mathfrak{g}_{1}\oplus\mathfrak{g}_{2}  \\};
  \path[-stealth]
    (m-2-1) edge node [left] {$f_{1}$} node [right] {$f_{2}$} (m-1-1)
    (m-1-1) edge node [above] {$\mathrm{H}$} (m-1-2)
    (m-2-1) edge node [below] {$F$} (m-1-2);
\end{tikzpicture}
\end{center}

\noindent
Since it will be useful in the following, let us remember that the tangent bundle of the product manifold $\mathcal{M}$ can be written as follows\footnote{See section \ref{sec:fibrebundles}.}

\begin{equation}
T\mathcal{M} = \mathrm{pr}^{\ast}_{1} T\mathcal{M}_{1} \oplus \mathrm{pr}^{\ast}_{2} T\mathcal{M}_{2}\, , 
\end{equation}

\noindent
and therefore

\begin{equation}
\Gamma\left( T\mathcal{M}\right) = \Gamma\left( \mathrm{pr}^{\ast}_{1} T\mathcal{M}_{1}\right) \oplus \Gamma\left( \mathrm{pr}^{\ast}_{2} T\mathcal{M}_{2}\right)\, .
\end{equation}

\noindent
In particular, it holds $\mathrm{pr}^{\ast}_{a}\Gamma\left(T\mathcal{M}_{a}\right)\subset \Gamma\left(\mathrm{pr}^{\ast}_{a}T\mathcal{M}_{a}\right)\subset \Gamma\left(T\mathcal{M}\right)$. The first task is now to check that $\omega$ is indeed an $n$-plectic structure on $\mathcal{M}$, provided $\omega_{a}$ is an $n_{a}$-plectic structure on $\mathcal{M}_{a}$. It is straightforward to see that $\omega$ is closed

\begin{equation}
d\omega = \mathrm{pr}^{\ast}_{1}d\omega_{1}\wedge\mathrm{pr}_2\omega_{2} + (-1)^{n_{1}} \mathrm{pr}^{\ast}_{1}\omega_{1}\wedge\mathrm{pr}^{\ast}_{2}d\omega_{2} = 0\, .
\end{equation} 

\noindent
To see that it is non-degenerate, let us assume that there exists a vector field $v\in\Gamma\left(\mathcal{M}\right)$ such that $i_{v}\omega = 0$. There exist then vector fields $v_{a}\in \Gamma\left(\mathcal{M}_{a}\right)$ such that 

\begin{equation}
v = \mathrm{pr}^{\ast}_{1} v_{1} + \mathrm{pr}^{\ast}_{2} v_{2}\, ,
\end{equation} 

\noindent
and therefore

\begin{equation}
\iota_{v}\omega = \iota_{v}\mathrm{pr}^{\ast}_{1} \omega_{1}\wedge\mathrm{pr}^{\ast}_{2} \omega_{2} + (-1)^{n_{1}+1}\mathrm{pr}^{\ast}_{1} \omega_{1}\wedge\iota_{v}\mathrm{pr}^{\ast}_{2} \omega_{2}\, ,
\end{equation} 

\noindent
implies $v_{a} = 0$ and $v=0$ since $\omega_{a}$ is non-degenerate.  Let us consider now $X_{\alpha_{a}} \in \mathfrak{X}_{\mathrm{Ham}}\left(\mathcal{M}_{a}\right) = \Gamma_{\mathrm{Ham}}\left(T\mathcal{M}_{a}\right)$ and construct the vector field

\begin{equation}
X_{\alpha} = \mathrm{pr}^{\ast}_{1} X_{\alpha_{1}} + \mathrm{pr}^{\ast}_{2} X_{\alpha_{2}}\, .
\end{equation}

\noindent
We have then

\begin{equation}
\label{eq:Ixw}
i_{X_{\alpha}}\omega = - d\left[\mathrm{pr}^{\ast}_{1} \alpha_{1}\wedge\mathrm{pr}^{\ast}_{2}\omega + \mathrm{pr}^{\ast}_{1}\omega_{1}\wedge\mathrm{pr}^{\ast}_{2}\alpha_{2}\right] = - d\alpha\, ,
\end{equation}

\noindent
and hence $X_{\alpha}$ is a hamiltonian vector field for $\omega$ with hamiltonian $(n_{1}+n_{2})$-form $\alpha$, which is of course defined up to a closed form. Therefore we have

\begin{equation}
\mathrm{pr}^{\ast}_{1}\Gamma_{\mathrm{Ham}}\left(T\mathcal{M}_{1}\right) + \mathrm{pr}^{\ast}_{2}\Gamma_{\mathrm{Ham}}\left(T\mathcal{M}_{1}\right)\subseteq \Gamma_{\mathrm{Ham}}\left(T\mathcal{M}\right) \, .
\end{equation}

\noindent
The following example shows that in general 

\begin{equation}
\mathrm{pr}^{\ast}_{1}\Gamma_{\mathrm{Ham}}\left(T\mathcal{M}_{1}\right) + \mathrm{pr}^{\ast}_{2}\Gamma_{\mathrm{Ham}}\left(T\mathcal{M}_{1}\right)\neq \Gamma_{\mathrm{Ham}}\left(T\mathcal{M}\right) \, .
\end{equation}

\noindent
\begin{ep}
\label{ep:hamdisthama} Let $\mathcal{M}_{a} = \mathbb{R}^{2}$ with coordinates $(x^{1}_{a}, x^{2}_{a})$ equipped with the volume form $\omega_{a} = f_{a}\, dx^{1}_{a}\wedge dx^{2}_{a}$, where $f_{a}\in C^{\infty}\left(\mathbb{R}^{2}\right)$ is a no-where vanishing, non-constant, differentiable function. Then

\begin{equation}
\mathcal{M} = \mathbb{R}^4\, , \qquad \omega = f_{1} f_{2}\, dx^{1}\wedge\cdots\wedge dx^{4}
\end{equation}

\noindent
where the coordinates of $\mathbb{R}^{4}$ are denoted by $(x^{1},\dots , x^{4})$. Then 

\begin{equation}
X_{\alpha} = -\frac{1}{f_{1} f_{2}} \frac{\partial}{\partial x^{1}}\, , \qquad \alpha = x^{2} dx^{3}\wedge dx^{4}\, ,
\end{equation}

\noindent
is a Hamiltonian vector field which cannot be written as a sum of Hamiltonian vector fields $X_{\alpha_{a}}$ of $(\mathbb{R}^{2},\omega_{a})$.
\end{ep}

\noindent
We are going to define now two applications $h_{\Omega}$ and $h_{\mathfrak{X}}$ as follows

\begin{eqnarray}
h_{\Omega}: \Omega^{n_{1}-1}_{\mathrm{Ham}}\left(\mathcal{M}_{1}\right)\oplus \Omega^{n_{2}-1}_{\mathrm{Ham}}\left(\mathcal{M}_{2}\right) &\to &  \Omega^{n_{1}+n_{2}}_{\mathrm{Ham}}\left(\mathcal{M}\right)\nonumber\\
\alpha_{1}\oplus \alpha_{2} &\mapsto &\mathrm{pr}^{\ast}_{1} \alpha_{1}\wedge\mathrm{pr}^{\ast}_{2}\omega_{2} + \mathrm{pr}^{\ast}_{1}\omega_{1}\wedge\mathrm{pr}^{\ast}_{2}\alpha_{2}\, ,
\end{eqnarray}

\begin{eqnarray}
h_{\mathfrak{X}}: \mathfrak{X}_{\mathrm{Ham}}\left(\mathcal{M}_{1}\right) \oplus  \mathfrak{X}_{\mathrm{Ham}}\left(\mathcal{M}_{2}\right)&\to & \mathfrak{X}_{\mathrm{Ham}}\left(\mathcal{M}\right)\nonumber\\
X_{\alpha_{1}}\oplus X_{\alpha_{2}} &\mapsto &\mathrm{pr}^{\ast}_{1} X_{\alpha_{1}} + \mathrm{pr}^{\ast}_{2} X_{\alpha_{2}}\, ,
\end{eqnarray}

\noindent
which make the following diagram commutative

\begin{center}
\begin{tikzpicture}
\label{diag:commutativemorphismII}
  \matrix (m) [matrix of math nodes,row sep=8em,column sep=9em,minimum width=2em]
  {
\Omega^{n_{1}-1}_{\mathrm{Ham}}\left(\mathcal{M}_{1}\right)\oplus \Omega^{n_{2}-1}_{\mathrm{Ham}}\left(\mathcal{M}_{2}\right) & \Omega^{n_{1}+n_{2}}_{\mathrm{Ham}}\left(\mathcal{M}\right) \\
\mathfrak{X}_{\mathrm{Ham}}\left(\mathcal{M}_{1}\right)\oplus \mathfrak{X}_{\mathrm{Ham}}\left(\mathcal{M}_{2}\right) & \mathfrak{X}_{\mathrm{Ham}}\left(\mathcal{M}\right) \\};
  \path[-stealth]
    (m-1-1) edge node [left] {$j$} (m-2-1)
    (m-1-1) edge node [above] {$h_{\Omega}$} (m-1-2)
    (m-2-1) edge node [above] {$h_{\mathfrak{X}}$} (m-2-2)
    (m-1-2) edge node [right] {$k$} (m-2-2);
\end{tikzpicture}
\end{center}

\noindent
Here $j\left(\alpha_{1}\oplus\alpha_{2}\right) = X_{\alpha_{1}}\oplus X_{\alpha_{2}}$ and $k(\alpha) = X_{\alpha}$. The vector space $\mathfrak{X}_{\mathrm{Ham}}\left(\mathcal{M}_{1}\right)\oplus \mathfrak{X}_{\mathrm{Ham}}\left(\mathcal{M}_{2}\right)$ over the real numbers $\mathbb{R}$ can be endowed with an $\mathbb{R}$-linear Lie bracket 

\begin{equation}
\left[\cdot,\cdot\right]_{0}:\mathfrak{X}_{\mathrm{Ham}}\left(\mathcal{M}_{1}\right)\oplus \mathfrak{X}_{\mathrm{Ham}}\left(\mathcal{M}_{2}\right)\oplus \mathfrak{X}_{\mathrm{Ham}}\left(\mathcal{M}_{1}\right)\oplus \mathfrak{X}_{\mathrm{Ham}}\left(\mathcal{M}_{2}\right) \to \mathfrak{X}_{\mathrm{Ham}}\left(\mathcal{M}_{1}\right)\oplus \mathfrak{X}_{\mathrm{Ham}}\left(\mathcal{M}_{2}\right) \, , 
\end{equation}

\noindent
defined as 

\begin{equation}
\left[X_{\alpha_{1}}\oplus X_{\alpha_{2}}, X_{\beta_{1}}\oplus X_{\beta_{2}} \right]_{0} = \left[X_{\alpha_{1}},X_{\beta_{1}}\right]_{1}\oplus \left[X_{\alpha_{2}},X_{\beta_{2}}\right]_{2}\, ,
\end{equation}

\noindent
where $\left[\cdot,\cdot\right]_{a}:\mathfrak{X}_{\mathrm{Ham}}\left(\mathcal{M}_{a}\right)\oplus \mathfrak{X}_{\mathrm{Ham}}\left(\mathcal{M}_{a}\right)\to \mathfrak{X}_{\mathrm{Ham}}\left(\mathcal{M}_{a}\right)$ is the canonical Lie brackets defined on $\mathfrak{X}\left(\mathcal{M}_{a}\right)$ and evaluated on $\mathfrak{X}_{\mathrm{Ham}}\left(\mathcal{M}_{a}\right)$. It can be easily seen that $h_{\mathrm{X}}$ preserves the Lie bracket, that is

\begin{equation}
h_{\mathfrak{X}}\left(\left[X_{\alpha_{1}}\oplus X_{\alpha_{2}}, X_{\beta_{1}}\oplus X_{\beta_{2}} \right]_{0}\right) = \left[ h_{\mathfrak{X}}(X_{\alpha_{1}}\oplus X_{\beta_{1}}), h_{\mathfrak{X}}(X_{\alpha_{2}}\oplus X_{\beta_{2}}) \right]\, .
\end{equation}

\noindent
Similarly, $\Omega^{n_{1}-1}_{\mathrm{Ham}}\left(\mathcal{M}_{1}\right)\oplus \Omega^{n_{2}-1}_{\mathrm{Ham}}\left(\mathcal{M}_{2}\right)$ can be endowed with a bracket

\begin{equation}
 \left\{\cdot,\cdot\right\}_{0}: \Omega^{n_{1}-1}_{\mathrm{Ham}}\left(\mathcal{M}_{1}\right)\oplus \Omega^{n_{2}-1}_{\mathrm{Ham}}\left(\mathcal{M}_{2}\right)\oplus \Omega^{n_{1}-1}_{\mathrm{Ham}}\left(\mathcal{M}_{1}\right)\oplus \Omega^{n_{2}-1}_{\mathrm{Ham}}\left(\mathcal{M}_{2}\right)\to \Omega^{n_{1}-1}_{\mathrm{Ham}}\left(\mathcal{M}_{1}\right)\oplus \Omega^{n_{2}-1}_{\mathrm{Ham}}\left(\mathcal{M}_{2}\right)
\end{equation}

\noindent
defined as follows

\begin{equation}
\left\{ \alpha_{1}\oplus\alpha_{2}, \beta_{1}\oplus\beta_{2}\right\}_{0} 
 = \left\{\alpha_{1},\beta_{1} \right\}_{1}\oplus \left\{\alpha_{2},\beta_{2} \right\}_{2}\, ,
\end{equation}

\noindent
where $\left\{\cdot,\cdot\right\}_{a}:\Omega^{n_{a}-1}_{\mathrm{Ham}}\left(\mathcal{M}_{a}\right)\oplus \Omega^{n_{a}-1}_{\mathrm{Ham}}\left(\mathcal{M}_{a}\right)\to \Omega^{n_{a}-1}_{\mathrm{Ham}}\left(\mathcal{M}_{a}\right)$ is the canonical Hamiltonian bracket defined on $\Omega^{n_{a}-1}_{\mathrm{Ham}}\left(\mathcal{M}_{a}\right)$. However, in this case, $h_{\Omega}$ does not preserve the bracket $\left\{\cdot,\cdot\right\}_{0}$, namely

\begin{equation}
h_{\Omega}\left(\left\{\alpha_{1}\oplus\alpha_{2},\beta_{1}\oplus\beta_{2}\right\}_{0}\right) = \left\{ h_{\Omega}(\alpha_{1}\oplus\alpha_{2}), h_{\Omega}(\beta_{1}\oplus\beta_{2})\right\} + (-1)^{n_1} d\left[ \mathrm{pr}^{\ast}_{1}\alpha_{1}\wedge \mathrm{pr}^{\ast}_{2} d\beta_{2} - \mathrm{pr}^{\ast}_{1}\beta_{1}\wedge \mathrm{pr}^{\ast}_{2} d\alpha_{2}\right]\, . 
\end{equation}

\noindent
As we have previously stated, the goal of this section is to relate the $n_{a}$-Lie algebras $L\left(\mathcal{M}_{a},\omega_{a}\right)$ constructued over $\left(\mathcal{M}_{a},\omega_{a}\right)$ to the $(n_{1}+n_{2}+1)$-Lie algebra $L\left(\mathcal{M},\omega\right)$ constructed over $\left(\mathcal{M},\omega\right)$. More precisely, we want to construct an Lie-$n$ algebra morphism from $L\left(\mathcal{M}_{1},\omega_{1}\right)\oplus L\left(\mathcal{M}_{2},\omega_{2}\right)$ to $L\left(\mathcal{M},\omega\right)$. To construct such morphism, $h_{\Omega}$ is going to be extremely relevant. The idea is to use $h_{\Omega}$ as the very first component of the $L_{\infty}$-algebra morphism $\mathrm{H}$. This suggested by the fact that $h_{\Omega}$ does not preserve the bracket $ \left\{\cdot,\cdot\right\}_{0}$ by an exact form, something that is characteristic of the corresponding component in a $L_{\infty}$-morphism. Assuming therefore that $H_{1} = h_{\Omega}$, we expect to obtain the form af all the other components of $\mathrm{H}$ by imposing the defining and consistency conditions that $\mathrm{H}$ has to obey, namely (\ref{eq:main_eq_1}) and (\ref{eq:main_eq_2}). However, this is an extremely involved procedure, so we have been able to check it only in the simplest case, where $\left(\mathcal{M}_{a},\omega_{a}\right)$ are both symplectic spaces. In any case, let us stress that we expect the procedure to hold in full generality.


\subsection{  $\left(\mathcal{M}_{a},\omega_{a}\right)$ Symplectic manifolds}


Since $\left(\mathcal{M}_{a},\omega_{a}\right)$ is a 1-plectic manifold, we have that  $\left(\mathcal{M},\omega\right)$ is a 3-plectic manifold. Consequently, the cochain complex $L$ of the Lie 3-algebra  $L\left(\mathcal{M}_{a},\omega_{a}\right)$ is given by

\begin{equation}
L: C^{\infty}\left(\mathcal{M}\right)\to\Omega^{1}\left(\mathcal{M}\right)\to\Omega^{2}\left(\mathcal{M}\right)\to \Omega^{3}_{\mathrm{Ham}}\left(\mathcal{M}\right)\, ,
\end{equation}

\noindent
where the coboundary operator is the usual de Rham exterior derivative. On the other hand, the cochain complex $L_{a}$ which underlies the 1-Lie algebra $L\left(\mathcal{M}_{a},\omega_{a}\right)$ is simply

\begin{equation}
L_{a}: C^{\infty}\left(\mathcal{M}_{a}\right)\, .
\end{equation}

\noindent
In this simpler situation, $L\left(\mathcal{M}_{1},\omega_{1}\right)\oplus L\left(\mathcal{M}_{2},\omega_{2}\right)$ is just a regular Lie-algebra, so we can apply propostion \ref{prop:Lie_alg_P_cor} in order to obtain the remaining components of the $L_{\infty}$-morphism $\mathrm{H}$ from the Lie algebra $L\left(\mathcal{M}_{1},\omega_{1}\right)\oplus L\left(\mathcal{M}_{2},\omega_{2}\right)$ to the 3-Lie algebra $L\left(\mathcal{M},\omega\right)$. $\mathrm{H}$ consists of three maps

\begin{equation}
H_{k}: \left[C^{\infty}\left(\mathcal{M}_{1}\right)\times C^{\infty}\left(\mathcal{M}_{2}\right)\right]^{\otimes k} \to L\, , \qquad k=1,2,3\, .
\end{equation}

\noindent
Please notice that we are imposing $H_{1} = h_{\Omega}$. The two remaining components will be find by imposing the defining conditions of a $L_{\infty}$ morphism on $\mathrm{H}$, which in proposition \ref{prop:Lie_alg_P_cor} have been adapted and simplified to the case of a Lie algebra as the domain of the morphism. As expected, the procedure is consistent and $\mathrm{H}_{2}$ and $\mathrm{H}_{3}$ can be determined, making $\mathrm{H}$ into an honest $L_{\infty}$-morphism. They are given by\footnote{In equation (\ref{eq:H3}) we have omitted the $\mathrm{pr}^{\ast}_{a}$ in order to make more readable the expression.}

\begin{equation}
\mathrm{H}_{2}\left(f_{1},f_{2},g_{1},g_{2}\right) =\frac{1}{2} \left(\mathrm{pr}^{\ast}_{1} f_{1}\wedge \mathrm{pr}^{\ast}_{2}dg_{2} - \mathrm{pr}^{\ast}_{1}df_{1}\wedge \mathrm{pr}^{\ast}_{2} g_{2} -\mathrm{pr}^{\ast}_{1} g_{1}\wedge \mathrm{pr}^{\ast}_{2}df_{2} + \mathrm{pr}^{\ast}_{1}dg_{1}\wedge \mathrm{pr}^{\ast}_{2} f_{2}\right)\, ,
\end{equation}

\begin{equation}
\label{eq:H3}
\mathrm{H}_{3}\left(f_{1},f_{2},g_{1},g_{2},h_{1},h_{2}\right) = \frac{1}{2}\left(f_{1}\left\{g_{2},h_{2}\right\} + f_{2}\left\{g_{1},h_{1}\right\} - g_{1}\left\{f_{2},h_{2}\right\} - g_{2}\left\{f_{1},h_{1}\right\} + h_{1}\left\{f_{2},g_{2}\right\} + h_{2}\left\{f_{1},g_{1}\right\}\right)\, ,
\end{equation}

\noindent
for all $f_{a}, g_{a}, h_{a} \in C^{\infty}\left(\mathcal{M}_{a}\right)\, ,\,\, a =1,2$. As explained in section \ref{sec:homotopymoment}, in order for H to be an homotopy moment map we have to check that

\begin{equation}
\label{eq:conditionhomotopy}
-\iota_{v_{x}} \omega = d\left(f_{1}(x)\right)\, ,
\end{equation}

\noindent
which holds by equation (\ref{eq:Ixw}). Therefore, we have constructed explicitly a Lie-$n$ algebra morphism from $L\left(\mathcal{M}_{1},\omega_{1}\right)\times L\left(\mathcal{M}_{2},\omega_{2}\right)$ to $L\left(\mathcal{M},\omega\right)$, and we expect the same procedure to hold in the general case. However, it turns out to be too involved to be carried out explicitly and a different approach is needed.


\subsection{ Product homotopy moment maps}
\label{sec:producthomotpy}


Since finding $\mathrm{H}$ explicitly seems to be a too complicated task to be performed by brute force, in this section we are going to pursue a different path. We are going to build an $L_{\infty}$-morphism from\footnote{In this section we are going to use a different notation, more suitable for the expressions that we will find. $a$ and $b$ are not indices but labels that denote different objects.} $\mathfrak{g}_{a}\oplus\mathfrak{g}_{b}$ to $L\left(\mathcal{M}=\mathcal{M}_{a}\times\mathcal{M}_{b}, \omega = \mathrm{pr}^{\ast}_{a}\omega_{a}\wedge \mathrm{pr}^{\ast}_{b}\omega_{b}\right)$, assuming that there exist homotopy moment maps $f_{C}$ for $G_{C}\circlearrowleft\left(\mathcal{M}_{C},\omega_{C}\right)\, ,\,\, C=a,b\,$. Here $\mathfrak{g}_{C}$ is the Lie algebras of the Lie group $G_{C}$. $\mathrm{F}$ will give us an homotopy moment map for the product manifold in terms of homotopy moment maps of the factors. Besides, $\mathrm{F}$ may give us also the opportunity to make an educated guess for $\mathrm{H}:L\left(\mathcal{M}_{a},\omega_{a}\right)\oplus L\left(\mathcal{M}_{b},\omega_{b}\right) \to L\left(\mathcal{M},\omega\right)$\footnote{See reference \cite{CarlosMarco}.}. The precise result is the following

\thm{\label{thm:morphismproduct} Let $G_{C}$ be a Lie group with Lie algebra $\mathfrak{g}_{C}$, where $C=a,b$. Let $(\mathcal{M}_{C},\omega_{C})$ be a $n_{C}$-plectic manifold equipped with a $G_{C}$ action which preserves $\omega_{C}$ and such that the infinitesimal $\mathfrak{g}_{C}$ action is via Hamiltonian fields. Let us assume that the corresponding homotopy moment maps exist and are given by $f^{C}:\mathfrak{g}_{C}\to L\left(\mathcal{M}_{C},\omega_{C}\right)$. Then the following set of maps  

\begin{eqnarray}
\label{eq:producthomotopymomentmap}
F_{k}:\left(\mathfrak{g}_{a}\oplus \mathfrak{g}_{b}\right)^{\otimes k} &\to & L\left(\mathcal{M},\omega\right)\, ,\qquad k=1,\dots , n_{1}+n_{2}+1\nonumber\\ \left(x^{1}_{a}\oplus x^{1}_{b}, \dots , x^{k}_{a}\oplus x^{k}_{b}\right) &\mapsto & \sum_{l=0}^{k}\sum_{\sigma\in\mathrm{Sh}(l,k-l)} (-1)^{\sigma}c^{a}_{l,k-l}\mathrm{pr}^{\ast}_{a} f^{a}_{l}\left(x^{1}_{a}, \dots , x^{l}_{a}\right)\wedge i_{l+1, \dots ,k}\mathrm{pr}^{\ast}_{b}\omega_{b} \\&+& \sum_{l=0}^{k}\sum_{\sigma\in\mathrm{Sh}(l,k-l)} (-1)^{\sigma} c^{b}_{l,k-l}i_{1, \dots ,l}\mathrm{pr}^{\ast}_{a}\omega_{a} \wedge\mathrm{pr}^{\ast}_{b}f^{b}_{(k-l)}\left(x^{l+1}_{b}, \dots , x^{k}_{b}\right)\nonumber\, ,
\end{eqnarray}

\noindent
of degree $|F_{k}|=1-k$ form a $L_{\infty}$-morphism $\mathrm{F}$ from $\mathfrak{g}_{1}\oplus\mathfrak{g}_{2}$ to $\left(\mathcal{M}_{1}\times\mathcal{M}_{2},\omega\right)$. We assume by definition that $f^{a}_{0} = f^{b}_{0} = 0$. In addition, $\mathrm{F}$, is homotopy moment map for the action of $G_{1}\times G_{2}$ on $\left(\mathcal{M},\omega\right)$. Here we have defined\footnote{Please remember that $\xi(k) = -(-1)^{\frac{k(k+1)}{2}}$.}

\begin{equation}
i_{1, \dots ,l}\,\mathrm{pr}^{\ast}_{C}\omega_{C} = \iota\left(\mathrm{pr}^{\ast}_{C} v_{f^{C}_{1}\left(x^{1}_{C}\right)}\wedge \dots \wedge \mathrm{pr}^{\ast}_{C} v_{f^{C}_{1}\left(x^{l}_{C}\right)}\right)\mathrm{pr}^{\ast}_{C}\omega_{C}\, ,
\end{equation}


\begin{eqnarray}
\label{eq:cs1}
c^{a}_{l,k-l} & = &\frac{1}{2}\xi (k) \xi (l)(-1)^{(n_{a}+1-l)(k-l)} \, ,\,\,\, l=1,\hdots,k\, ,\quad k>1  \\c^{b}_{l,k-l} & = & \frac{1}{2}\xi (k) \xi (k-l)(-1)^{(n_{a}+1-l)(k-l-1)}\, ,\,\,\, l = 0,\hdots,k-1\, ,\quad k>1 \, ,
\end{eqnarray}

\noindent
and

\begin{equation}
\label{eq:cs2}
c^{a}_{1,0} = 1\, ,\qquad c^{b}_{0,1} = 1\, .
\end{equation}

 }

\proof{We are going to proceed by proving that the $F_{k}:\left(\mathfrak{g}_{a}\oplus \mathfrak{g}_{b}\right)^{\otimes k} \to  L\left(\mathcal{M},\omega\right)\, ,\,\, k = 1,\dots,n_{1}+n_{2}+1\, ,$ assuming (\ref{eq:cs1}) and (\ref{eq:cs2}), are indeed the components of a $L_{\infty}$-morphism by checking that it obeys the conditions (\ref{eq:main_eq_1}) and (\ref{eq:main_eq_2}). Since $F_{k}$ is multilinear, it is enough to check the conditions on elements of the form $(x_{a}^{1},\cdots,x_{a}^{l},x_{b}^{l+1},\cdots, x_{b}^{k}) \in \left(\mathfrak{g}_{a}\oplus \mathfrak{g}_{b}\right)^{\otimes k}$. For $k=1$ we only need to check equation (\ref{eq:condicionham}), which is the condition of $F$ being an homotopy moment map. We can easily see that it is indeed satisfied. We will consider therefore $k>1$. We can write

\begin{equation}
F_{k}\left(x_{a}^{1},\dots,x_{a}^{l},x_{b}^{l+1},\dots, x_{b}^{k}\right) = c^{a}_{l,k-l}f^{a}_{l}\left(x^{1}_{a}, \dots , x^{l}_{a}\right) \iota_{l+1, \dots ,k} \omega_{b} + c^{b}_{l,k-l}\iota_{1, \dots ,l}\omega_{a} f^{b}_{(k-l)}\left(x^{l+1}_{b}, \dots , x^{k}_{b}\right)\, ,
\end{equation}

\noindent
where we have suppressed the $\pr^{a}$ and the $\wedge$ to ease the presentation. We first check condition (\ref{eq:main_eq_1}). The left hand side is given by

\begin{eqnarray}
\sum_{1\leq i < j \leq k} (-1)^{i+j+1} F_{k-1}\left(\left[ x^{i}, x^{j}\right],x^{1},\dots,\hat{x}^{i},\dots,\hat{x}^{j},\dots,x^{k}\right) =\nonumber \\ \sum_{1\leq i < j \leq l} (-1)^{i+j+1} F_{k-1}\left(\left[ x^{i}_{a}, x^{j}_{a}\right],x^{1}_{a},\dots,\hat{x}^{i}_{a},\dots,\hat{x}^{j}_{a},\dots,x^{l}_{a},x^{l+1}_{b},\dots, x^{k}\right)\\+ \sum_{l+1\leq i < j \leq k} (-1)^{i+j+1} F_{k-1}\left(\left[ x^{i}_{b}, x^{j}_{b}\right],x^{1}_{a},\dots,x^{l}_{a},x^{l+1}_{b},\dots,\hat{x}^{i}_{b},\dots,\hat{x}^{j}_{b},\dots, x^{k}\right) = (A) + (B) + (C) + (D)\nonumber\, ,
\end{eqnarray}

\noindent
where

\begin{equation}
(A) = \sum_{1\leq i < j \leq l} (-1)^{i+j+1} c^{a}_{l-1,k-l} f^{a}_{l-1} \left(\left[ x^{i}_{a}, x^{j}_{a}\right],x^{1}_{a},\dots,\hat{x}^{i}_{a},\dots,\hat{x}^{j}_{a},\dots,x^{l}_{a}\right) \iota_{l+1,\dots,k}\omega_{b}\, ,
\end{equation}

\begin{equation}
(B) = \sum_{1\leq i < j \leq l} (-1)^{i+j+1} c^{b}_{l-1,k-l} \iota\left( [i,j],1,\dots,\hat{i},\dots,\hat{j},\dots , k\right) \omega_{a} f^{b}_{k-l} \left(x^{l+1}_{b},\dots, x^{k}_{b}\right)\, ,
\end{equation}

\begin{equation}
(C) = \sum_{l+1\leq i < j \leq k} (-1)^{l+i+j+1} c^{a}_{l,k-l-1} f^{a}_{l} \left(x^{1}_{a},\dots,x^{l}_{a}\right) \iota \left([i,j], l+1,\dots,\hat{i},\dots , \hat{j}, \dots, k\right)\omega_{b}\, ,
\end{equation}

\begin{equation}
(D) = \sum_{l+1\leq i < j \leq k} (-1)^{l+i+j+1} c^{b}_{l,k-l-1} \iota\left(1,\dots , l\right) \omega_{a} f^{b}_{k-l-1} \left(\left[ x^{i}_{b}, x^{j}_{b}\right],x^{l+1}_{b},\dots,\hat{x}^{i}_{b},\dots,\hat{x}^{j}_{b},\dots,x^{k}_{b}\right)\, .
\end{equation}

\noindent
The right hand side of (\ref{eq:main_eq_1}) is given by

\begin{eqnarray}
& & \xi (k)\iota\left( v_{x^{1}_{a}}\wedge\cdots \wedge v_{x^{l}_{a}}\wedge v_{x^{l+1}_{b}}\wedge\cdots\wedge  v_{x^{k}_{b}}\right) \omega  =  \xi(k) (-1)^{(n_{a}+1-l)(k-l)} \iota_{1,\dots,l}\omega_{a}\iota_{l+1,\dots,k}\omega_{b} \nonumber \\ &=& \frac{1}{2}\xi(k) (-1)^{(n_{a}+1-l)(k-l)} \left(\xi(l) l^{a}(x^1_{a},\dots, x^l_{a})\iota_{l+1,\dots,k}\omega_{b} +\xi(k-l)\iota_{1,\dots,l}\omega_{a} l^{b}(x^{l+1}_{b},\dots, x^{k}_{b}) \right)\\ \nonumber &=& \frac{1}{2} (E_{1} + E_{2})\, ,
\end{eqnarray}

\noindent
where

\begin{equation}
E_{1} = \xi(k) (-1)^{(n_{a}+1-l)(k-l)} \xi(l) l^{a}(x^1_{a},\dots, x^l_{a})\iota_{l+1,\dots,k}\omega_{b}\, ,\quad E_{2}=\xi(k) (-1)^{(n_{a}+1-l)(k-l)}\xi(k-l)\iota_{1,\dots,l}\omega_{a} l^{b}(x^{l+1}_{b},\dots, x^{k}_{b})\nonumber\, ,
\end{equation}

\noindent
together with

\begin{equation}
dF_{k}\left(x_{a}^{1},\dots,x_{a}^{l},x_{b}^{l+1},\dots, x_{b}^{k}\right) = (A^{\prime}) + (B^{\prime}) + (C^{\prime}) + (D^{\prime})\, ,
\end{equation}

\noindent
where

\begin{equation}
(A^{\prime}) = c^{a}_{l,k-l} df^{a}_{l}\left(x^{1}_{a}, \dots , x^{l}_{a}\right) \iota_{l+1, \dots ,k} \omega_{b}\, ,
\end{equation}

\begin{equation}
(B^{\prime}) = (-1)^{n_{a}-l} c^{a}_{l,k-l} f^{a}_{l}\left(x^{1}_{a}, \dots , x^{l}_{a}\right) d\iota_{l+1, \dots ,k} \omega_{b}\, ,
\end{equation}

\begin{equation}
(C^{\prime}) = c^{b}_{l,k-l} d\iota_{1, \dots ,l}\omega_{a} f^{b}_{(k-l)}\left(x^{l+1}_{b}, \dots , x^{k}_{b}\right)\, ,
\end{equation}

\begin{equation}
(D^{\prime}) = (-1)^{n_{a}+1-l} c^{b}_{l,k-l}\iota_{1, \dots ,l}\omega_{a} df^{b}_{(k-l)}\left(x^{l+1}_{b}, \dots , x^{k}_{b}\right)\, .
\end{equation}

\noindent
We can see now that 

\begin{equation}
(A)  = (A^{\prime}) + \frac{1}{2}(E_{1})\, , \qquad (D) = (D^{\prime}) + \frac{1}{2}(E_{2})\, ,
\end{equation}

\noindent
are precisely the condition that $f^{a}$ and $f^{b}$ obey respectively. In addition, we have

\begin{equation}
(B) = (C^{\prime}) \, , \qquad (C) = (B^{\prime}) \, ,
\end{equation}

\noindent
and therefore (\ref{eq:main_eq_1}) is satisfied. Notice that we have used the following identity

\begin{equation}
d\iota\left(v_{1}\wedge\dots\wedge v_{k}\right)\omega = (-1)^{k}\sum_{1\leq i<j\leq k}(-1)^{i+j}\iota\left(\left[ v_{i},v_{j}\right]\wedge v_{1}\wedge\dots\wedge v_{i}\wedge\dots\wedge v_{j}\wedge\dots\wedge v_{k}\right)\omega\, ,
\end{equation}

\noindent
that holds for $v_{1},\hdots , v_{k}\in\mathfrak{X}_{\mathrm{Ham}}\left(\mathcal{M}\right)$ Hamiltonian vector fields of a multisymplectic manifold $\left( \mathcal{M},\omega\right)$. We finally check (\ref{eq:main_eq_2}). Let us rewrite the condition here

\begin{eqnarray}
\label{eq:lasthommoment}
& &\sum_{1\leq i < j \leq n_{a}+n_{b}+2} (-1)^{i+j+1} F_{n_{a}+n_{b}+1}\left(\left[ x^{i}, x^{j}\right],x^{1},\dots,\hat{x}^{i},\dots,\hat{x}^{j},\dots,x^{k}\right) = \nonumber\\ & &\sum_{1\leq i < j \leq l} (-1)^{i+j+1} F_{n_{a}+n_{b}+1}\left(\left[ x^{i}_{a}, x^{j}_{a}\right],x^{1}_{a},\dots,\hat{x}^{i}_{a},\dots,\hat{x}^{j}_{a},\dots, x^{l}_{a},x^{l+1}_{b},\dots, x^{n_{a}+n_{b}+2}_{b}\right) \nonumber\\& &\sum_{l+1\leq i < j \leq n_{a}+n_{b}+2} (-1)^{i+j+1} F_{n_{a}+n_{b}+1}\left(\left[ x^{i}_{b}, x^{j}_{b}\right],x^{1}_{a},\dots,x^{l}_{a},x^{l+1}_{b},\dots,\hat{x}^{i}_{b},\dots,\hat{x}^{j}_{b}\dots, x^{n_{a}+n_{b}+2}_{b}\right) \nonumber\\ & = & l_{n_{a}+n_{b}+2}\left(F_{1}(x^{1}),\dots,F_{1}(x^{n_{a}+n_{b}+2})\right)\, .
\end{eqnarray}

\noindent
Please notice now that equation (\ref{eq:lasthommoment}) vanishes identically except when $l=n_{a}+1$. Setting therefore $l=n_{a}+1$ equation (\ref{eq:lasthommoment}) is satisfied by using (\ref{eq:main_eq_2}) for $f^{C}$.

\qed}

\noindent
We have proven that given that if the action $G_{C}\circlearrowleft \left(\mathcal{M}_{C},\omega_{C}\right)$ is Hamiltonian (that is, there exists an homotopy moment map for the action), then one can construct an homotopy moment map for the action of $G_{a}\times G_{b}$ on $\left(\mathcal{M},\omega\right)$. In other words, we have proven the following corollary

\cor{\label{cor:hamisham} Given two multisymplectic manifolds $\left(\mathcal{M}_{C},\omega_{C}\right)\, , \,\, C=a,b\, ,$ equipped with the Hamiltonian action of a Lie group $G_{C}$, then the action of $G_{a}\times G_{b}$ on the product manifold $(\mathcal{M},\omega)$ is Hamiltonian, with homotopy moment map given by equation (\ref{eq:producthomotopymomentmap}).}

\proof{Since the action is Hamiltonian if there exist an Homotopy moment map, from theorem \ref{thm:morphismproduct} the result is obvious.}


\subsection{Applications}


In section \ref{sec:producthomotpy} we have shown how to build an homotopy moment map for the product manifold of two multisymplectic manifolds, assuming that an homotopy moment map for the individual manifolds exist. Here we are going to apply such construction to some specific examples of geometrical interest. Let us consider, as usual, two multisymplectic manifolds $(\mathcal{M}_{C},\omega_{C})\, ,\,\, C=a, b$. We assume that there is a Hamiltonian action of a Lie group $G_{C}\circlearrowleft \mathcal{M}_{C}$ with corresponding homotopy moment map $f^{C} : \mathfrak{g}_{C}\to L\left(\mathcal{M},\omega\right)$. By corollary \ref{cor:hamisham} we know that there is also a Hamiltonian action 

\begin{equation}
\label{eq:productaction}
G_{a}\times G_{b}\circlearrowleft \left(\mathcal{M}_{a}\times\mathcal{M}_{b},\pr^{\ast}_{a}\, \omega_{a}\wedge\pr^{\ast}_{b}\, \omega_{b} \right)\, ,
\end{equation}

\noindent
of $G_{a}\times G_{b}$ on $\left(\mathcal{M}_{a}\times\mathcal{M}_{b},\pr^{\ast}_{a}\, \omega_{a}\wedge\pr^{\ast}_{b}\, \omega_{b} \right)$ with homotopy moment map given by (\ref{eq:producthomotopymomentmap}). 

\begin{itemize}


\item $G_{a} = G_{b} = G$ acting on $\left(\mathcal{M}_{a}\times\mathcal{M}_{b},\pr^{\ast}_{a}\, \omega_{a}\wedge\pr^{\ast}_{b}\, \omega_{b} \right)$. 

In this case, the Hamiltonian action on the product manifold can be written as follows

\begin{equation}
\label{eq:productactionigual}
G\times G\circlearrowleft \left(\mathcal{M}_{a}\times\mathcal{M}_{b},\pr^{\ast}_{a}\, \omega_{a}\wedge\pr^{\ast}_{b}\, \omega_{b}\right)\, ,
\end{equation}

\noindent
and therefore one can restrict it to the diagonal $\Delta G = \left\{ (g, g)\, : \,\, g\in G\right\}$ of $G\times G$

\begin{equation}
\Delta G \circlearrowleft (\mathcal{M}_{a}\times\mathcal{M}_{b},\pr^{\ast}_{a}\, \omega_{a}\wedge\pr^{\ast}_{b}\, \omega_{b} )\, .
\end{equation}

\noindent
Notice that the homotopy moment map for the product action (\ref{eq:productaction}) is a $L_{\infty}$-algebra morphism

\begin{equation}
F: \mathfrak{g}_{a}\oplus \mathfrak{g}_{b} \to L\left(\mathcal{M}_{a}\times\mathcal{M}_{b},\pr^{\ast}_{a}\, \omega_{a}\wedge\pr^{\ast}_{b}\, \omega_{b}\right)\, .
\end{equation}

\noindent
Since there exist a morphism of Lie algebras from $\Delta \mathfrak{g}$ to $\mathfrak{g}\oplus\mathfrak{g}$

\begin{eqnarray}
m: \Delta \mathfrak{g} &\to &\mathfrak{g}\oplus\mathfrak{g}\nonumber \\
\left(x,x\right) & \mapsto  & x\oplus x\, ,
\end{eqnarray}

\noindent
we can use it to construct a $L_{\infty}$-algebra morphism

\begin{equation}
F\circ m : \mathfrak{g}\simeq\Delta \mathfrak{g}\to L\left(\mathcal{M}_{a}\times\mathcal{M}_{b},\pr^{\ast}_{a}\, \omega_{a}\wedge\pr^{\ast}_{b}\, \omega_{b}\right)\, ,
\end{equation}

\noindent
which is an homotopy moment map for the action of $\Delta G \simeq G$ on the product manifold $\left(\mathcal{M}_{a}\times\mathcal{M}_{b},\pr^{\ast}_{a}\, \omega_{a}\wedge\pr^{\ast}_{b}\, \omega_{b}\right)$.


\item $G_{a} = G_{b} = G\, , \quad \mathcal{M}_{a} =\mathcal{M}_{b} = \mathcal{M}\, , \quad \omega_{a} = \omega_{b} = \omega$.

We begin by proving the following lemma, which will be used in a moment

\lemma{\label{lemma:NenMGinvariant} Let $\left(\mathcal{M},\omega\right)$ be an $n$-plectic manifold equipped with the Hamiltonian action of a Lie group $G$ and the corresponding homotopy moment map $f:\mathfrak{g}\to L\left(\mathcal{M},\omega\right)$, where $\mathfrak{g}$ is the Lie algebra of $G$. Let $\mathcal{N}\overset{i}{\hookrightarrow} \mathcal{M}$ a $G$-invariant submanifold of $\mathcal{M}$. Then, assuming that $i^{\ast}\omega$ is a non-degenerate $(n+1)$-form on $\mathcal{N}$, the action $G\circlearrowleft \left(\mathcal{N},i^{\ast}\omega\right)$ is Hamiltonian with homotopy moment map $i^{\ast} f: g\to L\left(\mathcal{N},i^{\ast}\omega\right)$.}

\proof{We need to prove that $f^{\mathcal{N}} = i^{\ast} f$ is indeed an homotopy moment map for the action of $G$ on $\left(\mathcal{N},i^{\ast}\omega\right)$. That is, we have to prove that the following diagram commutes

\begin{center}
\begin{tikzpicture}
\label{diag:commutativemorphismIII}
  \matrix (m) [matrix of math nodes,row sep=8em,column sep=9em,minimum width=2em]
  {& L\left(\mathcal{N},i^{\ast}\omega\right) \\
\mathfrak{g} & \mathfrak{X}_{\mathrm{Ham}}\left(\mathcal{N},i^{\ast}\omega\right) \\};
  \path[-stealth]
    (m-2-1) edge node [above] {$f^{\mathcal{N}}$} (m-1-2)
    (m-2-1) edge node [above] {$v^{\mathcal{N}}_{-}$} (m-2-2)
    (m-1-2) edge node [right] {$\pi$} (m-2-2);
\end{tikzpicture}
\end{center}

\noindent
such that

\begin{equation}
\label{eq:conditionhomotopyN}
-i_{v^{\mathcal{N}}_{x}} i^{\ast}\omega = d f^{\mathcal{N}}_{1} (x)\, ,\qquad\forall\,\, x\in \mathfrak{g}\, ,
\end{equation}

\noindent
where

\begin{equation}
f^{\mathcal{N}} = i^{\ast} f :\mathfrak{g}\to L\left(\mathcal{N},i^{\ast}\omega\right)\, .
\end{equation}

\noindent
Let us introduce the following $L_{\infty}$-subalgebra of  $L\left(\mathcal{M},\omega\right)$

\begin{equation}
L^{\mathcal{N}}\left(\mathcal{M},\omega\right) = C^{\infty}\left(\mathcal{M}\right)\to\Omega^{1}\left(\mathcal{M}\right)\to\cdots\to
\tilde{\Omega}^{n-1}_{\mathrm{Ham}}\left(\mathcal{M}\right)\, ,
\end{equation}

\noindent
where 

\begin{equation}
\tilde{\Omega}^{n-1}_{\mathrm{Ham}}\left(\mathcal{M}\right) = \left\{\alpha\in\Omega^{n-1}_{\mathrm{Ham}}\left(\mathcal{M}\right)\,\, /\,\, v_{\alpha}|_{\mathcal{N}}\in T\mathcal{N}\right\}\, .
\end{equation}

\noindent
Since $L\left(\mathcal{M},\omega\right)$ and $L^{\mathcal{N}}\left(\mathcal{M},\omega\right)$ are equal in every component but the zero one, in order to see that $L^{\mathcal{N}}\left(\mathcal{M},\omega\right)$ is actually a $L_{\infty}$-subalgebra of $L\left(\mathcal{M},\omega\right)$, we only have to check that the binary bracket $l_{2}$ of $L\left(\mathcal{M},\omega\right)$ restricts to $\tilde{\Omega}^{n-1}_{\mathrm{Ham}}\left(\mathcal{M}\right)$. This is indeed the case since given any two Hamiltonian forms $\alpha$ and $\beta$ we have

\begin{equation}
\left[ v_{\alpha},v_{\beta}\right] = v_{\left\{\alpha,\beta\right\}}\, ,
\end{equation}

\noindent
and of course the Lie bracket of two vector fields tangent to $\mathcal{N}$ is again tangent to $\mathcal{N}$. Please notice now that 

\begin{equation}
\label{eq:fginLN}
f_{k}(x)\in L^{\mathcal{N}}\left(\mathcal{M},\omega\right)\, ,\qquad \forall\,\, x\in\mathfrak{g}^{\otimes k}\qquad k\geq 1\, .
\end{equation}

\noindent
Actually, equation \eqref{eq:fginLN} trivially holds for every $k>1$ since $L\left(\mathcal{M},\omega\right)$ and $L^{\mathcal{N}}\left(\mathcal{M},\omega\right)$ are equal in every component but the zero one. We have to check then only the $k=1$ case, that is, we have to prove that 

\begin{equation}
\label{eq:fginLN1}
f_{1}(x)\in \tilde{\Omega}^{n-1}_{\mathrm{Ham}}\left(\mathcal{M}\right)\, ,\qquad \forall\,\, x\in\mathfrak{g}\, .
\end{equation}

\noindent
By assumption, the action of $G$ on $\left(\mathcal{M},\omega\right)$ is through Hamiltonian vector fields. That is, the infinitesimal action of $G$ on $\left(\mathcal{M},\omega\right)$ is generated by Hamiltonian vector fields respect to $\omega$. This fact is encoded in the map $v_{-}$ of diagram \ref{diag:homotopymoment} which assigns to every infinitesimal element $x$ of $G$, which belongs to $\mathfrak{g}$, the Hamiltonian vector field $v_{x}$ that generates the corresponding action. Since $\mathcal{N}$ is a $G$-invariant submanifold of $\mathcal{M}$ we have that

\begin{equation}
v_{x}|_{\mathcal{N}} \in T\mathcal{N}\, ,
\end{equation}

\noindent
since otherwise there would exist at least one point $q\in\mathcal{N}$ such that its orbit under the action of $G$ is not contained in $\mathcal{N}$. Finally, since $v_{x} = v_{f_{1}(x)}$\footnote{We are slightly abusing the notation by calling by the same letter the Hamiltonian vector field that corresponds to a given Hamiltonian form and the Hamiltonian vector field that corresponds to a given element of the Lie algebra $\mathfrak{g}$.} thanks to equation (\ref{eq:condicionham}), we obtain equation (\ref{eq:fginLN1}). Thus we conclude that

\begin{equation}
\label{eq:fginLN2}
f\maps \mathfrak{g}\to  L^{\mathcal{N}}\left(\mathcal{M},\omega\right)\subseteq L\left(\mathcal{M},\omega\right)\, .
\end{equation}

\noindent
We are ready now to first prove that $f^{\mathcal{N}} = i^{\ast} f$ is an honest $L_{\infty}$-morphism from $\mathfrak{g}$ to $L\left(\mathcal{N},i^{\ast}\omega\right)$. 
This is trivially true for all the components of $f^{\mathcal{N}}$ except for $f^{\mathcal{N}}_{1}$. Therefore, what we have to check is that $i^{\ast}f_{1}(x)\in \Omega^{n-1}_{\mathrm{Ham}}\left(\mathcal{N},i^{\ast}\omega\right)$ for all $x\in\mathfrak{g}$. That is indeed the case since, given $x\in\mathfrak{g}$, we have

\begin{equation}
-\iota_{v_{f_{1}(x)}}\omega = df_{1}(x)\,\,\Rightarrow -i^{\ast}\left(\iota_{v_{f_{1}(x)}}\omega\right) = i^{\ast}df_{1}(x)\,\,\Rightarrow -\iota_{v_{f_{1}(x)}}\left(i^{\ast}\omega\right) = d\left(i^{\ast}f_{1}(x)\right) \, ,
\end{equation}

\noindent
since $i_{\ast}v_{f_{1}(x)}=v_{f_{1}(x)}$ thanks to equation (\ref{eq:fginLN2}). Therefore, $i^{\ast}f_{1}(x)$ is Hamiltonian respect to $i^{\ast}\omega$ with Hamiltonian vector field given by the restriction of $v_{f_{1}(x)}$ to $\mathcal{N}$. Is easy to see know that $f^{\mathcal{N}}$ satisfies equations (\ref{eq:conditionhomotopyN}), (\ref{eq:main_eq_1}) and (\ref{eq:main_eq_2}) and therefore it is indeed an homotopy moment map for the action of $G$ on $\left(\mathcal{N},i^{\ast}\omega\right)$.
 \qed

}

The product action can be written in this case as 

\begin{equation}
G\times G\circlearrowleft \left(\mathcal{M}\times\mathcal{M},\omega\wedge\omega\right)
\end{equation}

\noindent
In particular, we can restrict this action to the diagonal $\Delta G$ of $G\times G$ acting on the product manifold

\begin{equation}
G\simeq\Delta G\circlearrowleft \left(\mathcal{M}\times\mathcal{M},\omega\wedge\omega\right)\, .
\end{equation}

\noindent
The diagonal $\Delta \mathcal{M}\simeq \mathcal{M}$ of $\mathcal{M}\times\mathcal{M}$ is invariant under the action of $\Delta G$. Therefore, using the obvious inclusion

\begin{eqnarray}
i : \Delta \mathcal{M} \hookrightarrow \mathcal{M}\times\mathcal{M}\, ,
\end{eqnarray}

\noindent
we can define now an action of $G$ on $\mathcal{M}$ as follows

\begin{equation}
\label{eq:Gactionomega2}
G\simeq\Delta G\circlearrowleft \left(\Delta\mathcal{M},i^{\ast}\left(\omega\wedge\omega\right)\right)\simeq \left(\mathcal{M},\omega\wedge\omega\right)\, ,
\end{equation}

\noindent
which, by means of lemma \ref{lemma:NenMGinvariant}, is Hamiltonian with homotopy moment map given by

\begin{equation}
\label{eq:homotopyG2}
i^{\ast}F : \mathfrak{g}\to L\left(\mathcal{M},\omega\wedge\omega\right)\, ,
\end{equation}

\noindent
where $F$ is as in theorem \ref{thm:morphismproduct}. Therefore, we have proven that if an action $G\circlearrowleft (\mathcal{M},\omega)$ is Hamiltonian, then the action $G\circlearrowleft (\mathcal{M},\omega^{n})\, ,\quad n\in\mathbb{N}$ is also Hamiltonian\footnote{This is done by sightly extending the proof above, allowing $\omega_{a}$ and $\omega_{b}$ to be different and so that the restriction of their wedge product to $\Delta\mathcal{M}$ is non-degenerate.}.

\end{itemize}

\cleardoublepage


\renewcommand{\leftmark}{\MakeUppercase{Bibliography}}
\phantomsection
\addcontentsline{toc}{chapter}{References}
\bibliographystyle{ThesisStyle}
\bibliography{C:/Users/cshabazi/Dropbox/Referencias/References}
\label{biblio}
\clearpage

\end{document}